\crefname{hypothesis}{Hypothesis}{Hypotheses}
\Crefname{ALC@unique}{Line}{Lines}
\colorlet{texcscolor}{blue!50!black}
\colorlet{texemcolor}{red!70!black}
\colorlet{texpreamble}{red!70!black}
\colorlet{codebackground}{black!25!white!25}
\lstdefinestyle{siamlatex}{%
  style=tcblatex,
  texcsstyle=*\color{texcscolor},
  texcsstyle=[2]\color{texemcolor},
  keywordstyle=[2]\color{texemcolor},
  moretexcs={cref,Cref,maketitle,mathcal,text,headers,email,url},
}
\DeclareTotalTCBox{\code}{ v O{} }
{ 
  fontupper=\ttfamily\color{black},
  nobeforeafter,
  tcbox raise base,
  colback=codebackground,colframe=white,
  top=0pt,bottom=0pt,left=0mm,right=0mm,
  leftrule=0pt,rightrule=0pt,toprule=0mm,bottomrule=0mm,
  boxsep=0.5mm,
  #2}{#1}
\patchcmd\newpage{\vfil}{}{}{}
\def\beq{\begin{equation}}
\def\eeq{\end{equation}}
\def\bgam{ {\boldsymbol{\gamma}} }
\def\bbgam{ {\bar{\boldsymbol{\gamma}}} }
\def\am{\mathrm{am}}
\def\gm{\mathrm{gm}}
\numberwithin{equation}{section}
\def\spann{\mathrm{span}}
\newtheorem{exa}{Example}[section]
\def\beq{\begin{equation}}
\def\eeq{\end{equation}}
\def\epsavg{ {{\epsilon_{\mathrm{avg}}}}}
\def\cN{ {{\mathcal N}}}
\def\bbN{ {\mathbb N}}
\def\mathand{{\quad \quad \mathrm{and} \quad \quad}}
\def\R{ {\mathbb{R}} }
\def\K{ {\mathbb{K}} }
\def\C{ {\mathbb{C}} }
\def\cpd{{\cp{\bA,\bB,\bC}}}
\def\cE{ {\mathcal E} }
\def\scrL{{{\mathscr L}}}
\def\scrQ{{{\mathscr Q}}}
\def\scrZ{{{\mathscr Z}}}
\def\scrW{{{\mathscr W}}}
\def\cM{{{\mathcal M}}}
\def\cN{ {\mathcal N} }
\def\cS{{\mathcal S} }
\def\hcS{{\hat{\cS}}}
\def\cT{{\mathcal T}}
\def\cW{{\mathcal W}}
\def\bA{ {\mathbf{A}} }
\def\bB{ {\mathbf{B}} }
\def\bC{ {\mathbf{C}} }
\def\tbA{ {\tilde{\bA}} }
\def\tbB{ {\tilde{\bB}} }
\def\tbC{ {\tilde{\bC}} }
\def\bD{ {\mathbf{D}} }
\def\bE{ {\mathbf{E}} }
\def\bG{ {\mathbf{M}} }
\def\bH{ {\mathbf{N}} }
\def\bI{ {\mathbf{I}} }
\def\bL{ {\mathbf{L}} }
\def\bM{ {\mathbf{M}} }
\def\bN{ {\mathbf{N}} }
\def\bQ{ {\mathbf{Q}} }
\def\bP{ {\mathbf{P}} }
\def\hR{ {\hat{R}}  }
\def\bS{ {\mathbf{S}} }
\def\bT{ {\mathbf{T}} }
\def\bU{ {\mathbf{U}} }
\def\bV{ {\mathbf{V}} }
\def\hbV{ {\hat{\mathbf{V}}} }
\def\bW{ {\mathbf{W}} }
\def\bX{ {\mathbf{X}} }
\def\bY{ {\mathbf{Y}} }
\def\bZ{ {\mathbf{Z}} }
\def\ba{ {\vec{a}} }
\def\bb{ {\vec{b}} }
\def\bc{ {\vec{c}} }
\def\tba{ {\tilde{\ba}} }
\def\tbb{ {\tilde{\bb}} }
\def\tbc{ {\tilde{\bc}} }
\def\tbcjp{ {\tbc_j^\perp} }
\def\bq{ {{\vec{q}}} }
\def\bu{ {\vec{u}} }
\def\bv{ {\vec{v}} }
\def\bw{ {\vec{w}} }
\def\bx{ {\vec{x}} }
\def\by{ {\vec{y}} }
\def\bz{ {\vec{z}} }
\def\lam{ {\lambda} }
\def\blam{ {\vec{\lambda}} }
\def\bLam{ {\vec{\Lambda}} }
\def\bomega{ { \vec{\omega}} }
\newcommand{\tnorm}[1]{{\|{#1}\|_{2}} }
\newcommand{\spnorm}[1]{{\|{#1}\|_{\mathrm{sp}}} }
\newcommand{\NepsL}[1]{ {{\mathfrak{N}_\epsilon (\scrL_{{#1}})}} }
\newcommand{\NepsLal}[1]{ {{ \mathfrak{N}_{\alpha \epsilon} (\scrL_{{#1}})}} }
\newcommand{\NepsLzero}[1]{ {{\mathfrak{N}_{0} (\scrL_{{#1}})}} }
\newcommand{\NepsLone}[1]{ {{ \mathfrak{N}_{1 \epsilon} (\scrL_{{#1}})}} }
\def\b0{ {\vec{0}} }
\def\tcT{{\tilde{\cT}}}
\def\hcT{{\hat{\cT}}}
\def\hcW{{\hat{\cW}}}
\def\rank{\mbox{rank}}
\def\cS{{\mathcal S}}
\newcommand{\df}[1]{{\bf{#1}}{\index{#1}}}
\def\ran{\mathrm{ran \ }}
\def\sv{{\mathrm{sv}}}
\def\md{{\mathrm{md}}}
\def\be{ {\mathbf{e}} }
\title{Guarantees for existence of a best canonical polyadic approximation of a noisy low-rank tensor\thanks{\funding{Research supported by: (1) Flemish Government: This work was supported by the Fonds de la Recherche Scientifique--FNRS and the Fonds Wetenschappelijk Onderzoek--Vlaanderen under EOS Project no 30468160 (SeLMA); (2) KU Leuven Internal Funds C16/15/059 and ID-N project no 3E190402. (3) This research received funding from the Flemish Government under the “Onderzoeksprogramma Artificiële Intelligentie (AI) Vlaanderen” programme.  (4) Work supported by Leuven Institute for Artificial Intelligence (Leuven.ai).}
}}
\author{Eric Evert\footnotemark[2] 
\and Lieven De Lathauwer\thanks{KU Leuven, Dept. of Electrical Engineering ESAT/STADIUS, Kasteelpark
Arenberg 10, bus 2446, B-3001 Leuven, Belgium and KU Leuven – Kulak, Group Science, Engineering and Technology, E.
Sabbelaan 53, B-8500 Kortrijk, Belgium
\newline \hbox{\quad} (Eric.Evert, Lieven.DeLathauwer)@kuleuven.be.}}
\begin{document}
\maketitle

\begin{tcbverbatimwrite}{tmp_\jobname_abstract.tex}
\begin{abstract}
The canonical polyadic decomposition (CPD) of a low rank tensor plays a major role in data analysis and signal processing by allowing for unique recovery of underlying factors. However, it is well known that the low rank CPD approximation problem is ill-posed. That is, a tensor may fail to have a best rank $R$ CPD approximation when $R>1$. 

  This article gives deterministic bounds for the existence of best low rank tensor approximations over  $\K=\R$ or $\K=\C$.  More precisely, given a tensor $\cT \in \K^{I \times I \times I}$ of rank $R \leq I$, we compute the radius of a Frobenius norm ball centered at $\cT$ in which best $\K$-rank $R$ approximations are guaranteed to exist. In addition we show that every $\K$-rank $R$ tensor inside of this ball has a unique canonical polyadic decomposition. This neighborhood may be interpreted as a neighborhood of ``mathematical truth" in with CPD approximation and computation is well-posed.
  
  In pursuit of these bounds, we describe low rank tensor decomposition as a ``joint generalized eigenvalue" problem. Using this framework, we show that, under mild assumptions, a low rank tensor which has rank strictly greater than border rank is defective in the sense of algebraic and geometric multiplicities for joint generalized eigenvalues. Bounds for existence of best low rank approximations are then obtained by establishing perturbation theoretic results for the joint generalized eigenvalue problem. In this way we establish a connection between existence of best low rank approximations and the tensor spectral norm. In addition we solve a ``tensor Procrustes problem" which examines orthogonal compressions for pairs of tensors.
  
  The main results of the article are illustrated by a variety of numerical experiments.

\end{abstract}

\begin{keywords}
  tensors, canonical polyadic decomposition, existence, low rank approximation, generalized eigenvalue, multilinear algebra,  border rank, Bauer--Fike matching distance
\end{keywords}

\begin{AMS}
Primary:  15A69, 15A42 
\end{AMS}
\end{tcbverbatimwrite}
\input{tmp_\jobname_abstract.tex}


\section{Introduction}

Tensors, i.e. multiindexed arrays, are natural generalizations of matrices and are commonplace  in fields such as machine learning and signal processing where data sets often have inherent higher-order structure \cite{Setal17,Cetal15}. Tensor decompositions are powerful tools which can be used either to recover source signals responsible for measured data or to compress data \cite{CJ10}. 
	
One of the most popular decompositions for tensors is the canonical polyadic decomposition (CPD) which expresses a given tensor as a sum of {\it rank one tensors}. A key property of this decomposition is that, under mild conditions, the CPD of a low rank tensor is essentially unique \cite{DD13,K77,SB00}. This essential uniqueness makes CPD an indispensable tool in applications such as indpendent compenent analysis where one wants to recover source signals from measured data \cite{KB09,DDV00a}. In the case that the data has a natural tensor structure, CPD can allow for unique recovery of the source signals. 

In applications one typically works with a measurement of a {\it low rank tensor}, that is, a tensor which is representable by a small number of source signals. This measurement is  always corrupted by measurement error, thus the measurement itself does not truly have low rank. As a consequence, in order to obtain the signals which represent the tensor, a practitioner must compute a low rank approximation to the measured tensor. 

Although computation of a best low rank approximation is a natural problem, it is not necessarily a well-posed problem. In particular, an arbitrary tensor may not have a best rank (less than or equal to) $R$ approximation when $R>1$ \cite{KHL89,SL08}. Furthermore, even if a best rank $\R$ approximation exists, it can fail to be unique. In fact, when considering CPD over $\R$, nonexistence of a best low rank approximation occurs with positive probability \cite{S06,TKD88}. In contrast, it has recently been shown that  a unique best rank $R$ approximation exists with probability one when working over $\C$ \cite{QML19}. See \cite{FO14,FS15} for further discussion of uniqueness of best approximations.\footnote{Note that uniqueness of a best rank $R$ approximation is a separate issue from uniqueness of the CPD of that approximation. }

The issue of potential nonexistence of best low rank approximations is not one which should be taken lightly. Should a tensor fail to have a best low rank approximation, one could of course choose to simply take a near optimal low rank approximation. However, when a best low rank approximation fails to exist, near optimal low rank approximations exhibit a phenomena known as {\it diverging components} \cite{KDS08,S08}. Roughly, the optimal way to represent a tensor which does not have a best low rank approximation is to express the tensor as destructive interference between signals which, in the limit, have infinite magnitude. Stated in an alternative fashion, computation of low rank approximations is unstable in a neighborhood around a tensor which does not have a best low rank approximation \cite{V17}. 

Although computation of a best rank $R$ approximation can suffer from issues of nonexistence, tensor approximations typically work surprisingly well in practice. The reason for this is closely linked to the low rank structure of tensors which naturally arise in applications. In this article we explain this connection and quantify how much error can be present in the measurement of a low rank tensor before one encounters issues of nonexistence. Additionally, we further develop the perspective that low rank tensor approximation is in fact well-posed in a neighborhood of ``mathematical truth" for tensors occurring in practical applications. In particular, we provide deterministic bounds for the existence of best low rank approximations for tensors of order three.

More precisely, given a low rank tensor of order three, we compute a deterministic lower bound for the radius of a Frobenius norm ball centered at the tensor in which best low rank approximations are guaranteed to exist and in which every rank $R$ tensor has a unique CPD. While \cite[Theorem 5.1]{AGHKT14} gives a probabilistic lower bound for the radius of such a neighborhood which may be applied to a restricted class of rank $R$ symmetric tensors, the computation of a deterministic lower bound which may be applied to a large collection of rank $R$ tensors is novel.

 To obtain these bounds, we develop perturbation theoretic bounds for a ``joint generalized eigenvalue decomposition" which, with mild assumptions, is equivalent to the CPD of a low rank tensor. In particular, we show that low border rank tensors which have rank strictly greater than border rank can be identified with matrix tuples which are defective in the sense of algebraic and geometric multiplicities for the joint generalized eigenvalue problem. Given a low rank tensor, we then compute lower bounds on the distance to a tensor which is defective in this sense.

\subsection{Brief statement of results}

Given a tensor $\cT \in \K^{R \times R \times K}$, we call a nonzero vector $\bx \in \K^R$ a \df{joint generalized eigenvector} (JGE vector) of $\cT$ if there exists a $\blam \in \K^K$ and a nonzero vector $\by \in \K^R$ such that
\beq
\label{eq:JGEVdefIntro}
\bT(:,:,\ell) \bx = \lam_\ell \by
\eeq
for all $\ell = 1, \dots, K$, and we call $\spann(\blam)$ the \df{joint generalized eigenvalue} of $\cT$ corresponding to $\bx$. In Section \ref{sec:JGEmultdefs} we formally define appropriate notions of characteristic polynomials and algebraic multiplicity for JGE vectors. For now we roughly define the \df{algebraic multiplicity} of $\spann(\blam)$ to be the degree to which $\blam$ ``divides" the characteristic polynomial of $\cT$, and we define the \df{geometric multiplicity} of $\spann(\blam)$ to be the number of linearly independent JGE vectors corresponding to $\spann(\blam)$. 
 
  \begin{theorem}
\label{theorem:CPDiffJGEVDSimple}
Let $\cT \in \K^{R \times R \times K}$ and assume there is some vector $\bv \in \K^K$ such that $\cT \cdot_3 \bv$ is invertible. Then $\cT$ has $\K$-rank $R$ if and only if $\cT$ has a basis of JGE eigenvectors.

Furthermore, if $\cS \in \K^{R \times R \times K}$ is a border $\K$-rank $R$ tensor which has $\K$-rank strictly greater than $R$ and there is a vector $\bu \in \K^K$ such that $\cS \cdot_3 \bu$ is invertible, then $\cS$ has a joint generalized eigenvalue with algebraic multiplicity strictly greater than geometric multiplicity. 
\end{theorem}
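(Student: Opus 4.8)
The plan is to prove both assertions of the theorem from a single idea: under the invertibility hypothesis the frontal slices of a $\K$-rank $R$ tensor admit a ``simultaneous diagonalization'' $\bT(:,:,\ell)=Y\,D_\ell\,X^{-1}$ whose columns of $X$ are exactly the JGE vectors, and a border $\K$-rank $R$ tensor inherits a ``simultaneous triangularization'' by a limiting argument. For the first assertion, write $N=\cT\cdot_3\bv$, which is invertible. If $\cT=\sum_{r=1}^R\ba_r\otimes\bb_r\otimes\bc_r$ has $\K$-rank $R$, then $\bT(:,:,\ell)=A\,D_\ell\,B^T$ with $A=[\ba_1\cdots\ba_R]$, $B=[\bb_1\cdots\bb_R]$ and $D_\ell=\mathrm{diag}((\bc_1)_\ell,\dots,(\bc_R)_\ell)$; invertibility of $N=A\,D_\bv\,B^T$ forces $A,B$ invertible, and then the columns of $B^{-T}$ are $R$ linearly independent JGE vectors (the $s$-th column has $\by=\ba_s$, $\blam=\bc_s$). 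Conversely, given a basis $\bx_1,\dots,\bx_R$ of JGE vectors with data $\by_s,\blam_s$, setting $X=[\bx_1\cdots\bx_R]$, $Y=[\by_1\cdots\by_R]$, $\hat D_\ell=\mathrm{diag}((\blam_1)_\ell,\dots,(\blam_R)_\ell)$ gives $\bT(:,:,\ell)X=Y\hat D_\ell$, hence $\bT(:,:,\ell)=Y\hat D_\ell X^{-1}$; invertibility of $N=Y\hat D_\bv X^{-1}$ forces $Y$ invertible, and reading off the rows of $X^{-1}$ exhibits a $\K$-rank $R$ decomposition of $\cT$ (rank exactly $R$, since $N$ invertible already forces rank $\ge R$).

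For the second assertion, first normalize. With $N=\cS\cdot_3\bu$ invertible set $M_\ell=N^{-1}\bS(:,:,\ell)$, so $\sum_\ell u_\ell M_\ell=I$; one checks that $\spann(\blam)$ is a JGE value of $\cS$ (necessarily with $\langle\blam,\bu\rangle\neq 0$, else $N\bx=0$ forces $\bx=0$) precisely when $\boldsymbol{\mu}:=\blam/\langle\blam,\bu\rangle$ is a joint eigenvalue of the tuple $(M_1,\dots,M_K)$, and that the JGE vectors for $\spann(\blam)$ are exactly $\bigcap_\ell\ker(M_\ell-\mu_\ell I)$, so geometric multiplicities agree. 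Now use border rank: pick rank $\le R$ tensors $\cS_k\to\cS$; for large $k$, $\cS_k\cdot_3\bu$ is invertible (open condition), which forces $\rank\cS_k=R$, so by the first assertion each tuple $M_{k,\ell}=(\cS_k\cdot_3\bu)^{-1}\bS_k(:,:,\ell)$ is simultaneously diagonalized (by $B_k^T$), hence pairwise commuting. Passing to the limit, using continuity of matrix inversion and that pairwise commutativity is a closed condition, the matrices $M_1,\dots,M_K$ pairwise commute.

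It then remains to run the standard commuting-tuple argument. Over $\C$, simultaneously triangularize $M_1,\dots,M_K$; the diagonals give joint eigenvalues $\boldsymbol{\mu}_1,\dots,\boldsymbol{\mu}_R$ with multiplicity, and since $\det\!\big(\sum_\ell t_\ell \bS(:,:,\ell)\big)=\det(N)\prod_{s=1}^R\langle\boldsymbol{\mu}_s,\mathbf{t}\rangle$ this accounts for the full degree-$R$ characteristic polynomial of $\cS$, so the algebraic multiplicities of the JGE values sum to $R$ (the $\R$ case is the same once one notes that this form, being a limit of products of $R$ real linear forms, is again such a product). Using the primary decomposition of the commuting family one checks that every $\spann(\boldsymbol{\mu}_s)$ is genuinely a JGE value — a strictly upper triangular block of $(M_\ell-\mu_{s,\ell}I)_\ell$ has a nonzero common kernel vector $\bx$, and $\by=N\bx\neq 0$ — and that geometric multiplicity $\le$ algebraic multiplicity for each value. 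On the other hand, since $\cS$ has $\K$-rank strictly greater than $R$, the first assertion (contrapositive) says $\cS$ has no basis of JGE vectors; as joint eigenspaces for distinct joint eigenvalues are linearly independent, the total geometric multiplicity is $<R$. Comparing $\sum\gm<R=\sum\am$ with $\gm\le\am$ termwise forces some JGE value with $\am$ strictly greater than $\gm$.

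The main obstacle is the middle step: producing the commuting tuple in the limit and arguing that the characteristic polynomial of $\cS$ still ``carries all $R$ of its roots'' — that is, that a border $\K$-rank $R$ tensor, while no longer simultaneously diagonalizable for the JGE problem, remains simultaneously triangularizable. The remaining work is bookkeeping: matching the triangularization picture to the formal definitions of the characteristic polynomial and of algebraic and geometric multiplicity from Section~\ref{sec:JGEmultdefs}, and the real-versus-complex care noted above.
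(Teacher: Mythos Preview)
Your proof of the first assertion matches the paper's (Theorem~\ref{theorem:rankvsmult}, Item~\eqref{it:RankIFFBasis}): both directions amount to reading the CPD $\bT_k = \bA D_k(\bC) \bB^\mathrm{T}$ as a simultaneous diagonalization, with slice-mix invertibility forcing the factor matrices invertible.

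For the second assertion your route is correct but genuinely different. You pass to the commuting tuple $M_\ell = N^{-1}\bS_\ell$, observe that commutativity survives the limit from the rank-$R$ approximants, and then invoke the primary decomposition of a commuting family to obtain at once the factorization of $p_\cS$, the bound $\gm \le \am$, and the nonvanishing $\gm \ge 1$ for every joint eigenvalue. The paper instead stays in the JGE framework throughout: it proves $\am \ge \gm$ by block upper-triangularizing via the JGE vectors themselves (Proposition~\ref{proposition:alggreatergeo}), obtains the linear factorization of $p_\cS$ by a compactness argument on the normalized factors of the $p_{\cS_k}$, and establishes $\gm \ge 1$ by taking limits of JGE \emph{pairs} of the approximants (Lemma~\ref{lemma:BorderRankGeo}). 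Your reduction trades that somewhat delicate limit of eigenpairs for an easy closed-condition limit plus standard commuting-operator theory; the paper's direct approach avoids the normalization $N^{-1}\bS_\ell$ entirely, which matters later for the perturbation bounds (the paper explicitly discusses why the commuting reduction is ill-suited there) but is immaterial for this structural statement. Your brief treatment of the $\R$ case---a limit of products of real linear forms is again such a product, forcing the joint eigenvalues real so that the primary decomposition descends to $\R$---is correct and is precisely the compactness argument the paper writes out in the last part of Proposition~\ref{proposition:alggreatergeo}.
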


 Theorem \ref{theorem:CPDiffJGEVDSimple} is a simplified statement of the upcoming Theorem \ref{theorem:rankvsmult}. We remark that Theorem \ref{theorem:CPDiffJGEVDSimple} can be viewed as a reformulation of algebraic-geometric tensor results in the language of the joint generalized eigenvalue problem, e.g. see \cite{Stra83,LM17}. While this result is known in a different language, the reformulation in terms of joint generalized eigenvalues illustrates the connection between perturbation theory for joint generalized eigenvalues and existence of best rank $R$ tensor approximations. Using this connection we give computable deterministic guarantees for the existence of best low rank tensor approximations.
 
\begin{theorem}
\label{theorem:MultiplePencilBoundSimple}
Let $\cT \in \K^{R \times R \times K}$ be a tensor of $\K$-rank $R$ and assume there is some vector $\bv \in \K^K$ such that $\cT \cdot_3 \bv$ is invertible. For each $k=1,\dots, \lfloor K/2 \rfloor$, let $[\![\bA_k,\bB_k,\bC_k]\!]$ be a CPD of the matrix subpencil $(\cT(:,:,2k-1),\cT(:,:,2k))$ of $\cT$ where the columns of $\bC_k$ have unit Euclidean norm. Define 
\[
\epsilon_k := \frac{\sigma_{\min} (\bA_k) \sigma_{\min} (\bB_k) \min_{i \neq j} \chi (\bC_k(:,i), \bC_k(:,j))}{2} \qquad \mathrm{for \ } k=1,\dots,\lfloor K/2 \rfloor
\]
and set $\epsilon = \|(\epsilon_1,\dots,\epsilon_{\lfloor K/2 \rfloor})\|_2$. If $\cW \in \K^{R \times R \times K}$ satisfies $\|\cW-\cT\|_{\mathrm{F}} < \epsilon/2$ then $\cW$ has a best $\K$-rank $R$ approximation. Furthermore, any best $\K$-rank $R$ approximation of $\cW$ has a unique CPD, and, with probability one, the best $\K$-rank $R$ approximation of $\cW$ is unique. Here $\chi(\cdot,\cdot)$ denotes the chordal distance between unit vectors and $\sigma_{\min} (\cdot)$ denotes the smallest (possibly zero) singular value of a matrix.
\end{theorem}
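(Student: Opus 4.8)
The plan is to convert the statement into a lower bound on the distance from $\cT$ to the set of tensors of border $\K$-rank at most $R$ but $\K$-rank strictly greater than $R$, and then to obtain that lower bound from perturbation theory for the matrix subpencils of $\cT$, which is exactly what the quantities $\epsilon_k$ measure.

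First I would set up the reduction. Let $\cB$ denote the Euclidean closure of the set of tensors of $\K$-rank $\le R$; by definition this is precisely the set of tensors of border $\K$-rank $\le R$, it is closed, it contains $\cT$, and tensors of $\K$-rank $\le R$ are dense in it, so for every $\cW$ one has $\mathrm{dist}(\cW,\cB)=\mathrm{dist}(\cW,\{\,\rank\le R\,\})$ and a best border-$\K$-rank-$R$ approximation $\cX^\star$ of $\cW$ exists. If $\|\cW-\cT\|_{\mathrm{F}}<\epsilon/2$, then since $\cT\in\cB$ we get $\|\cX^\star-\cT\|_{\mathrm{F}}\le\|\cX^\star-\cW\|_{\mathrm{F}}+\|\cW-\cT\|_{\mathrm{F}}<\epsilon$. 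Hence it suffices to prove the claim: \emph{every} $\cX\in\cB$ with $\|\cX-\cT\|_{\mathrm{F}}<\epsilon$ has $\K$-rank $R$ and a unique CPD. Once this is known, $\cX^\star$ has $\K$-rank $\le R$ and therefore is a best $\K$-rank-$R$ approximation of $\cW$, with a unique CPD; and any best $\K$-rank-$R$ approximation of $\cW$ also attains $\mathrm{dist}(\cW,\cB)$, hence also lies within $\epsilon$ of $\cT$ and has a unique CPD. Uniqueness of the best $\K$-rank-$R$ approximation for almost every $\cW$ in the ball then follows from the classical fact that the set of points having two distinct nearest points in a fixed closed set is Lebesgue-null.

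To prove the claim, suppose $\cX\in\cB$, $\|\cX-\cT\|_{\mathrm{F}}<\epsilon$, and, for contradiction, $\K$-rank $\cX>R$. Writing $\delta_k$ for the Frobenius norm of the perturbation the $k$-th matrix subpencil undergoes, orthogonality of distinct slices gives $\sum_{k}\delta_k^2\le\|\cX-\cT\|_{\mathrm{F}}^2<\epsilon^2=\sum_k\epsilon_k^2$, so there is an index $k^\star$ with $\delta_{k^\star}<\epsilon_{k^\star}$; in particular $\epsilon_{k^\star}>0$, which forces $\bA_{k^\star},\bB_{k^\star}$ to be invertible $R\times R$ matrices and the $k^\star$-th subpencil of $\cT$ to be a regular pencil with $R$ pairwise distinct generalized eigenvalues $\spann(\bC_{k^\star}(:,i))$. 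The crucial technical input is a Bauer--Fike / matching-distance estimate for $R\times R$ pencils: a pencil with decomposition $[\![\bA,\bB,\bC]\!]$ (unit columns of $\bC$, distinct eigenvalues) stays regular and diagonalizable with $R$ distinct eigenvalues under every Frobenius perturbation of size less than $\tfrac12\,\sigma_{\min}(\bA)\,\sigma_{\min}(\bB)\,\min_{i\ne j}\chi(\bC(:,i),\bC(:,j))$; one proves this by expressing the perturbed pencil in the eigenbasis of the unperturbed one, with $\sigma_{\min}(\bA)$ and $\sigma_{\min}(\bB)$ bounding the conditioning of that change of basis, and checking that such a perturbation is too small relative to the minimal chordal eigenvalue gap to create a Jordan block, to merge two eigenvalues, or to render the pencil singular. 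Applying this with $(\bA,\bB,\bC)=(\bA_{k^\star},\bB_{k^\star},\bC_{k^\star})$ and $\delta_{k^\star}<\epsilon_{k^\star}$ shows that the $k^\star$-th subpencil of $\cX$ is regular and diagonalizable with $R$ distinct eigenvalues.

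It remains to propagate this single well-behaved subpencil to a genuine CPD of $\cX$. Because $\cX$ has border $\K$-rank $\le R$, it is a limit of tensors of $\K$-rank $\le R$ whose $k^\star$-th subpencils converge to this regular, distinct-eigenvalue pencil; since the generalized eigendecomposition of such a pencil is unique, the suitably normalized first and second factor matrices of the approximants must converge to invertible limits $\hA,\hB$, whence their (diagonal) third factors converge to diagonal matrices and $\cX=[\![\hA,\hB,\hC]\!]$. Thus $\cX$ has $\K$-rank $R$; moreover $\hC$ has no two proportional columns (the $k^\star$-th subpencil of $\cX$ has $R$ distinct eigenvalues), so Kruskal's condition applies and the CPD of $\cX$ is unique --- contradicting $\K$-rank $\cX>R$ and proving the claim. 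The step I expect to be the main obstacle is the pencil perturbation estimate together with this limiting argument: the delicate issues are controlling \emph{near-singular} factor matrices $\bA_{k^\star},\bB_{k^\star}$ (small but positive $\sigma_{\min}$) and ruling out the possibility that the diverging components present in a border-rank-but-not-rank decomposition of $\cX$ corrupt the well-conditioned subpencil --- exactly the situations the quantitative joint generalized eigenvalue perturbation theory of the earlier sections is designed to handle.
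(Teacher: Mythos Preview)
Your proof is correct and follows the paper's overall architecture: the same reduction from an $\epsilon/2$ ball around $\cT$ to an $\epsilon$ ball intersected with border-rank-$R$ tensors, the same pigeonhole on $\sum_k \delta_k^2 < \sum_k \epsilon_k^2$ to isolate a subpencil index $k^\star$, the same invocation of the pencil matching-distance bound (the paper's Theorem~\ref{theorem:TensMDBound}) to force that subpencil to be simple, and the same appeal to Kruskal for uniqueness of the CPD.

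The one genuine difference is in the propagation step. The paper argues algebraically via the characteristic polynomial: Proposition~\ref{proposition:DefectiveSubpencils} shows that if $\cX$ had a repeated JGE value then \emph{every} subpencil would inherit one, contradicting simplicity of the $k^\star$-th subpencil; simplicity of $\cX$ then gives $\K$-rank $R$ by Theorem~\ref{theorem:rankvsmult}. You instead run a direct limiting argument: write $\cX=\lim\cX^n$ with $\rank_\K\cX^n\le R$, observe that for large $n$ the $k^\star$-th subpencil of $\cX^n$ is regular with distinct eigenvalues, read off converging invertible factors $\bA^n,\bB^n$ from its eigendecomposition, and pull back the remaining slices through $(\bA^n)^{-1}(\cdot)(\bB^n)^{-\mathrm T}$ to get converging diagonals. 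This works and is more self-contained, though it requires checking that $\cX^n$ has rank exactly $R$ for large $n$ (forced by slice-mix invertibility of the limiting subpencil) and that the subpencil CPD of $\cX^n$ coincides with the restriction of a global CPD (automatic once rank is $R$). The paper's route via characteristic polynomials is shorter and avoids tracking convergence of factor matrices, but your analytic route makes the mechanism more explicit. Your Rademacher-style justification for almost-sure uniqueness of the nearest point is also valid and more elementary than the paper's citations.
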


The full statement of this result, Theorem \ref{theorem:MultiplePencilBound}, allows for more general collections of subpencils to be considered than those considered above. Additionally, a slight modification of this result may be applied to a measured tensor $\cM'$ in an attempt to guarantee that $\cM'$ has a best rank $R$ approximation, see Theorem \ref{theorem:MLSVDExistenceBound}.

\subsection{An overview of our approach}

The primary goal in this article is to give deterministic bounds for the existence of best low rank tensor approximations. A simple yet important idea for our approach is the following: If $\hcT'$ is a best border rank $R$ approximation of $\cM' = \cT' + \cN' \in \K^{I_1 \times I_2 \times I_3}$ where $\cT'$ has rank $R$, then the Frobenius distance from $\cT'$ to $\hcT'$ cannot exceed $2 \|\cN'\|_\mathrm{F}$. Therefore, to show that every tensor in a ball of radius $\epsilon/2$ around $\cT'$ has a best rank $R$ approximation, it is sufficient to show that every border rank $R$ tensor in a $\epsilon$ radius ball around $\cT'$ has rank $R$. 

We temporarily restrict our discussion to the case of $R \times R \times K$ tensors which are ``slice mix invertible". In this setting, using the language of joint generalized eigenvalues, Theorem \ref{theorem:rankvsmult} shows that slice mix invertible tensors of size $R \times R \times K$ with border rank $R$ and rank strictly greater than $R$ have a joint generalized eigenvalue with algebraic multiplicity strictly greater than geometric multiplicity. We call such tensors defective. These defective tensors can be seen as a natural extension to the tensor setting of matrices which have a nontrivial Jordan block.

From this point our goal is to guarantee that a best border rank $R$ approximation is nondefective, hence has rank equal to border rank. Mirroring the matrix case, a natural sufficient condition for a tensor to be nondefective is that the tensor has distinct joint generalized eigenvalues. This motivates the study and development of perturbation theoretic bounds for the joint generalized eigenvalue problem, see Theorems \ref{theorem:TensMDBound} and \ref{theorem:TensBauerFike}. 

Theorem \ref{theorem:TensMDBound} in particular may be used to guarantee that a perturbation of a matrix pencil is nondefective. By applying this result to a collection of subpencils of the signal tensor, we obtain a full tensor based bound which may be used to guarantee the existence of a best rank $R$ approximation of the measured tensor, see Theorems \ref{theorem:MultiplePencilBoundSimple}, \ref{theorem:MultiplePencilBound}, and \ref{theorem:MLSVDExistenceBound}.

The more general setting where $\cM' = \cT' +\cN'$ has size $I_1 \times I_2 \times I_3$ and $\cT'$ has rank $R \leq \min \{I_1,I_2\}$ is treated by a reduction to the $R \times R \times K$ setting. As above let $\hcT'$ be a best border rank $R$ approximation of $\cM'=\cT'+\cN'$. Numerically, we wish to work with orthogonal compressions of $\cT'$ and $\hcT'$, as this puts us in the $R \times R \times K$ setting discussed above. However, there can be adverse effects if one independently computes the compressions of $\cT'$ and $\hcT'$. 

Theorem \ref{theorem:TensorProcrustes} discusses how to orthogonally compress a \textit{pair}\footnote{Computing an orthogonal compression or a low multilinear rank approximation of a \textit{single} tensor is a standard preprocessing in many applications, e.g., see \cite{Setal17}. } of tensors and shows that there exist orthogonal compressions $\cT$ and $\hcT$ of $\cT'$ and $\hcT'$, respectively, so that
\beq
\label{eq:ProcrustesEqIntro}
\|\cT-\hcT\|_\mathrm{F} \leq \|\cT'-\hcT'\|_\mathrm{F}.
\eeq
From this point we may use our results in the $R \times R \times K$ setting to guarantee that $\hcT$ has rank $R$, hence $\hcT'$ has rank $R$.

\subsection{Connections to existing literature}

We now briefly discuss the relationship between our results and existing literature.

\subsubsection{Constrained existence results}

While our results  are novel in providing computable deterministic bounds for the existence of best unconstrained low rank tensor approximations, there are several known existence results in the setting of \textit{constrained} tensor decompositions. For example, \cite[Theorem 25]{LC14} shows that if one places constraints on the coherence of the factor matrices for the approximation of $\cM'$, then the constrained best low rank approximation problem always has a minimizer. As a special case, this result implies that best low rank approximations exist if one restricts to considering factor matrices which have orthogonal columns. Also see \cite{Hars1984d,Setal12} and references therein.

 Another commonly considered constraint is nonnegativity. \cite[Theorem 6.1]{LC09}, see also \cite{QCL16}, shows that for nonnegative tensors, best low nonnegative rank approximations always exist. That is, a best low rank approximation exists if one constrains their factors to be nonnegative.

As previously mentioned, it is in general known that if $\cM'$ does not have a best rank $R$ approximation then, near optimal approximations exhibit diverging components. Thus if one places constraints on the factors which prevent diverging components from occurring, then a best approximation with respect to those conditions will always exist. While in some applications it may be natural to consider such constraints, it is important to separately consider the \textit{unconstrained} problem, as constraints will not always be applicable. Additionally, as discussed in \cite{LC14}, imposing constraints to ``fix" ill-posedness may be misleading, as the problem at hand may in fact be ill-posed. For example, CPD may not be a proper data model for the problem, or the rank in consideration may be too low.

\subsubsection{Existence in tubular neighborhoods}

In the language of algebraic geometry, it is well known that given a smooth point on an algebraic variety, there is an open neighborhood around that point in which unique best approximations to the variety exist, e.g. see \cite{FS13,Aba80,Sot16,Lan12}. Translating to this terminology, the ``simple" (see Section \ref{sec:JGEdefs}) rank $R$ tensors we consider are smooth points on the $R$th secant variety of the Segre variety, thus there is necessarily some neighborhood around a simple rank $R$ tensor in which best rank $R$ approximations exist. This leads to a so called \textit{tubular neighborhood} around the set of simple rank $R$ tensors in which best rank $R$ approximations are guaranteed to exist.

 Though it was already known that there is  \textit{some neighborhood} around a given simple rank $R$ tensor in which best rank $R$ approximations are guaranteed to exist, until now there has been no known method for computing a lower bound for the \textit{radius} of such a neighborhood. That is, until now there has been no known method for providing a \textit{deterministic bound on noise levels} which may be used to guarantee that a measured tensor has a best rank $R$ approximation. 

\subsubsection{The joint eigenvalue problem}

A problem which is closely related to our joint generalized eigenvalue is the \textit{joint eigenvalue problem} where one studies joint eigenvalues of a tuple of commuting matrices. In particular, by performing a modal multiplication in the first or second mode, a rank $R$ slice mix invertible tensor $\cT \in \K^{R \times R \times K}$ can be transformed to a tensor whose frontal slices form a collection of commuting matrices. 

There are indeed results in literature for the joint eigenvalue problem which may be used to show that if $(\bT_1,\dots,\bT_k) \in (\K^{R \times R})^K \cong \K^{R \times R \times K}$ is a simultaneously diagonalizable collection of matrices whose span contains a rank $R$ matrix, then there exists \textit{some neighborhood} around this tuple in which \textit{other tuples of commuting matrices} are simultaneously diagonalizable, e.g., see \cite[Theorem 3.5]{KP02}; however, the results in this direction that we are aware of do not provide a method for computing a lower bound for the \textit{radius} of such a neighborhood. Furthermore, though there is much literature, e.g. see \cite{KSS12}, for the joint eigenvalue problem, there are two significant shortcomings in attempting to directly apply this literature to our setting. 

First, we essentially wish to determine if some tensor $\hcT \in \K^{R \times R \times K}$ is diagonalizable in the sense of the joint generalized eigenvalue problem based on its distance to a slice mix invertible tensor $\cT$ which itself is diagonalizable in the sense of the joint generalized eigenvalue problem.  Here $\hcT$ is thought of as a best low rank approximation to some noisy observed tensor $\cT+\cN$ for which $\cT$ is the signal portion. However, \textit{a transformation which makes the frontal slices of $\cT$ commute may not make the frontal slices of $\hcT$ commute and there may exist no invertible transformation which simultaneously converts both the frontal slices of $\cT$ and the frontal slices of $\hcT$ to tuples of commuting matrices}. Thus, many tensors which can be treated with our results are missed by converting to the commuting case.

Second, treating a generalized eigenvalue problem\footnote{If $K=2$, then our notions of joint generalized eigenvectors and joint generalized eigenvalues coincide exactly with the (classical) notions of generalized eigenvalues and generalized eigenvectors as discussed in \cite{SS90}.} by transforming it to a classical eigenvalue problem can  have adverse effects on stability. This is illustrated by the discussion in \cite[Section 7.7.1]{GV96}. These difficulties motivate studying perturbation theory for the joint generalized eigenvalue problem in its own right.

\subsubsection{Perturbation theory for the generalized eigenvalue problem}

One of our main results, Theorem \ref{theorem:TensMDBound}, gives a lower bound on the radius of a neighborhood around a given diagonalizable matrix pencil in which all matrix pencils are diagonalizable. This result extends the ``spectral variation" bound of \cite[Chapter VI, Theorem 2.7]{SS90} to the much stronger notion of a ``matching distance" bound. Additionally, our extension is given in the tensor spectral norm. The proof of this extension may be unsurprising to experts in perturbation theory for classical generalized eigenvalues; however, the authors are unaware of an equivalent  extension in the literature. 

In general the literature on perturbation theory for generalized eigenvalues and generalized eigenvectors is rich, for example see \cite{GV96,Naka11,Cang03}. We note that other bounds presented in the literature often use information from both the original matrix pencil and its perturbation. While such bounds can provide helpful insight into how generalized eigenvalues are affected by a specific perturbation, they are not well-suited to our needs.

\subsection{Organization}

Section \ref{sec:notationdefinitions} formally introduces our notation and definitions and gives formal statements of our main results. In Section \ref{sec:JGEbasics} we discuss basic theory for the joint generalized eigenvalue problem. Section \ref{sec:perturbation} discusses perturbation theoretic results for the joint generalized eigenvalue problem, while Section \ref{sec:Bounds} gives deterministic bounds for existence of a best low rank tensor approximation using results from Sections \ref{sec:JGEbasics} and \ref{sec:perturbation}. Numerical experiments illustrating our results are given in Section \ref{sec:figures}. 

\section{Notation, Definitions, and Main Results}
\label{sec:notationdefinitions}

We now give our basic definitions and notations as well as formal statements of the main results of the article. Let $\K$ denote $\R$ or $\C$. We denote scalars, vectors, matrices, and tensors with entries in $\K$ by lower case $(m)$, bold lower case $(\vec{m})$, bold upper case $(\bM)$, and calligraphic script $(\cM)$, respectively. Additionally, script font $(\mathscr{M})$ is used to denote a subspace of $\K^I$. Given a matrix $\bM \in \K^{I_1 \times I_2}$, let $\bM^\mathrm{T},\bM^\mathrm{H},\bM^\dagger$ denote the transpose, Hermitian, and Moore-Penrose pseudo inverse of $\bM$, respectively.  If $\bM$ is invertible, let $\bM^{-1}$ and $\bM^{-\mathrm{T}}$ denote the inverse and inverse transpose of $\bM$. Additionally let $\|\bM\|_2$ and $\|\bM\|_\mathrm{F}$ denote the spectral and Frobenius norms of $\bM$. 

Let $\langle \cdot, \cdot \rangle$ denote the usual inner product on $\K^R$ which is conjugate linear in the second term. We make frequent use of the linear form
\[
\langle \blam, \bbgam \rangle = \lambda_1 \gamma_1 + \lambda_2 \gamma_2 +\cdots + \lambda_K \gamma_K.
\]
Here $\blam,\bgam \in \K^K$ and $\bbgam$ denotes the complex conjugate of $\bgam$. Note that $\langle \cdot,\cdot \rangle$ denotes the usual inner product on $\K^K$ which is conjugate linear in the second term.\footnote{The linear form we are interested in is linear in $\bgam$ while the inner product $\langle \cdot,\cdot \rangle$ is conjugate linear in the second argument. Hence, the complex conjugate of $\bgam$ appears in the expression $\langle \blam, \bbgam \rangle$. We use the notation $\langle \cdot,\cdot \rangle$ as we often consider some geometric relationship between $\blam$ and $\bgam$.}

A \df{tensor} is a multiindexed array $\cM'$ with entries in $\K$. In this article our primary interest lies with tensors of order $3$. That is, we study tensors $\cM' \in \K^{I_1 \times I_2 \times I_3}$.  Often we will restrict our study to low rank (pencil-like) tensors having size $R \times R \times K$ and rank $R$. To the reader keep track of context, we label tensors of size $I_1 \times I_2 \times I_3$ with the symbol $'$, while tensors of size $R \times R \times K$ not labelled in this way. So for example, $\cM' \in \K^{I_1 \times I_2 \times I_3}$ while $\cM \in \K^{R \times R \times K}$.

Define the \df{outer product} of vectors $\ba \in \K^{I_1}, \bb \in \K^{I_2}$ and  $\bc \in \K^{I_3}$, denoted $\ba \otimes \bb \otimes \bc$, to be the $I_1 \times I_2 \times I_3$ tensor whose $i,j,k$th entry is given by
\[
(\ba \otimes \bb \otimes \bc)_{ijk}= a_i b_j c_k.
\]
A tensor which can be expressed as the outer product of nonzero vectors is called a \df{rank one tensor}.

Every tensor may be expressed as a sum of rank one tensors. That is, given a tensor $\cM' \in \K^{I_1 \times I_2 \times I_3}$ there exists a collection of vector tuples  $\{(\ba_r, \bb_r, \bc_r) \}_{r=1}^R \subset \K^{I_1} \times \K^{I_2} \times \K^{I_3}$ such that
\beq
\cM' = \sum_{r=1}^R \ba_r  \otimes \bb_r \otimes \bc_r.
\label{eq:CPDdef}
\eeq
In the case that $R$ is as small as possible, the tensor $\cM'$ is said to have $\K$-\df{rank} $R$, and the decomposition in equation \eqref{eq:CPDdef} is called a \df{canonical polyadic decomposition} (CPD) of $\cM'$. We let $\rank_\K (\cM')$ denote the rank of $\cM'$ over $\K$. A fundamental question in the study of tensors and in many applications is to determine the CPD of a given tensor over either $\R$ or $\C$. We emphasize that the field in which the entries of the tensor lie may differ from the field over which the decomposition is taken. In particular, if $ \cM'$ is a tensor with real entries, then both real and complex rank are interesting to consider, and these ranks are not necessarily equal, e.g. see \cite{BK99,B13,K89}.

Let $\bA \in \K^{I_1 \times R}$ be the matrix whose $r$th column is the vector $\ba_r$, and similarly define $\bB \in \K^{I_2 \times R}$ and $\bC \in \K^{I_3 \times R}$. Write 
\[
\cM'= \cpd
\]
as a compact notation for the CPD of $\cM'$. The matrices $\bA,\bB,$ and $\bC$ are called the \df{factor matrices} of $\cM'$. As discussed in \cite{Setal17} one has 
\[
\cM'(:,:,k) = \bA D_k (\bC) \bB^\mathrm{T} \qquad \mathrm{for \ all \ } k=1,\dots, I_3.
\]
Here $D_k (\bC) \in \K^{R \times R}$ is the diagonal matrix whose diagonal entries are given by the $k$th row of $\bC$. 

Say a CPD $\cM'=\cpd$ is \df{unique} if for any other CPD $\cM'=[\![\tilde\bA,\tilde\bB,\tilde\bC]\!]$, there exist a permutation matrix $\Pi$ and invertible diagonal matrices $\bD_\bA, \bD_\bB,\bD_\bC$ such that 
\[
\bA \bD_\bA \Pi = \tilde\bA \mathand  \bB \bD_\bB \Pi = \tilde\bB \mathand  \bC \bD_\bC \Pi = \tilde\bC.
\]
That is, a CPD is unique if the only indeterminacies in the CPD are permutation and scaling/counter scaling of the columns of the factor matrices.

Central to the potential nonexistence of a best low rank tensor approximation is the fact that the set of tensors of rank less than or equal to $R$ is, in general, not a closed set \cite{KHL89,SL08}. For example, there are tensors having rank $3$ which are a limit of rank $2$ tensors. Of course, such a tensor cannot have a best rank $2$ approximation. Say a $\cM'$ tensor has \df{border $\K$-rank $R$} if $\cM'$ is a limit of $\K$-rank $R$ tensors and there is no integer $L<R$ such that $\cM'$ is a limit of $\K$-rank $L$ tensors. That is, $R$ should be the smallest integer such that $\cM'$ is a limit of $\K$-rank $R$ tensors. We let $\K$-$\underline{\rank} (\cM')$ denote the border $\K$-rank of $\cM'$. 

We will make frequent use of the fact that the set of border $\K$-rank $R$ tensors is a closed set. This in particular implies that a tensor always has a (not necessarily unique) best border $\K$-rank $R$ approximation.

\subsection{Approximately low rank tensors}
\label{sec:Compress}
 In this article we are mainly interested in tensors which have (approximately) low rank in the following sense. Stated briefly, for tensors $\cM' \in \K^{I_1 \times I_2 \times I_3}$ occuring in applications, one often has $\cM' = \cT'+\cN'$ where $\cT'$ is a rank $R$ tensor with rank equal to multilinear rank and $\cN'$ is noise. That is, $\cM'$ is a noisy measurement of low rank signal tensor $\cT'$. Our main result, Theorem \ref{theorem:MultiplePencilBound}, uses information from $\cT'$ to compute an upper bound $\epsilon$ on the Frobenius norm of the noise $\|\cN'\|_\mathrm{F}$ such that $\cM'$ is guaranteed to have a best rank $R$ approximation if $\|\cN'\|_\mathrm{F} < \epsilon$. A detailed explanation of our perspective follows.

Given a tensor $\cM' \in \K^{I_1 \times I_2 \times I_3}$, define
\beq
\label{eq:MLrankDef}
\begin{array}{ccc}
R_1 (\cM'):= \dim \spann \{\cM'(:,k,\ell)\}_{\forall k,\ell}, \\
R_2 (\cM'):= \dim \spann \{\cM'(j,:,\ell)\}_{\forall j,\ell}, \\
R_3 (\cM'):= \dim \spann \{\cM'(j,k,:)\}_{\forall j,k}. \\
\end{array}
\eeq
The integer triple $(R_1 (\cM'), R_2 (\cM'), R_3 (\cM'))$ is called the \df{multilinear rank} of $\cM'$. In the case where $R_i (\cM') = I_i$ for each $i=1,2,3$ we say $\cM'$ has \df{full multilinear rank}. The vectors in equation \eqref{eq:MLrankDef} used to define each $R_i (\cM')$ are called the \df{mode-$i$ fibers} of $\cM'$, and we let $\cM'_{(i)}$ denote a matrix whose columns are equal to the mode-$i$ fibers of $\cM'$. The matrix $\cM'_{(i)}$ is called a \df{mode-$i$ unfolding} of $\cM'$.\footnote{There is freedom in the ordering of the columns of $\cM'_{(i)}$, so in this sense we have not well-defined $\cM'_{(i)}$. However, the properties of mode-$i$ unfoldings that we are most interested in are invariant under permutations of columns, so this ambiguity makes little difference for our purpose so long as one is consistent in the ordering of columns.}

It is not difficult to show that one always has 
\beq
\label{eq:RankVsMLRank}
\rank_\K (\cM') \geq \max \{R_1 (\cM'), R_2 (\cM'), R_3 (\cM')\}.
\eeq
For generic tensors this inequality is strict; however, for the signal portion of many tensors occurring in applications we have equality. In particular, in applications the tensor $\cM'$ is often a noisy measurement of some low rank signal tensor $\cT'$ which satisfies  $\rank_{\K} (\cT') \leq \min\{I_1,I_2,I_3\}$ and, up to a permutation of indices, 
\beq
\label{eq:LowRankNotion}
R=\rank_\K (\cT') = R_{1} (\cT') = R_{2} (\cT') \geq R_{3} (\cT')=K.
\eeq
That is, we think of $\cM'$ as $\cM'=\cT'+\cN'$ where $\cT'$ has $\K$-rank $R$ and multilinear rank $(R,R,K)$ and $\cN'$ is noise. Note that our results still hold if the multilinear rank of $\cT'$ in one mode is less than $R$, and allowing this increases flexibility, hence $R_3 (\cT')=K$ is allowed to be less than or equal to $R$.

For later use we note that the multilinear ranks of a tensor not only lower bound the rank of the tensor, but also the border rank of the tensor. That is, one has
\beq
\label{eq:BRankVsMLRank}
\K\text{-}\underline{\rank} (\cM') \geq \max \{R_1 (\cM'), R_2 (\cM'), R_3 (\cM')\}.
\eeq
This fact follows from lower semi-continuity of matrix rank, as tensors in a sufficiently small neighborhood around $\cM'$ must have multilinear rank greater than or equal to that of $\cM'$. It follows that the multilinear rank of $\cM'$ provides a lower bound for the rank of tensors in a neighborhood of $\cM'$.

\subsection{Orthogonal compressions and modal products}
In the case where $R_i(\cT') < I_i$ for some $i$ the tensor $\cT'$ maybe be orthogonally compressed to a tensor $\cT\in \K^{R_1 (\cT') \times R_2 (\cT') \times R_3 (\cT')}$. In particular, for $i=1,2,3$ let $\bV_i \in \K^{I_i \times R_i (\cT')}$ be a matrix whose columns form an orthonormal basis for the span of the mode-$i$ fibers of $\cT'$. Then the tensor 
\beq
\label{eq:orthcompr}
\cT' = \cT' \cdot_1 \bV_1^\mathrm{H} \cdot_2 \bV_2^\mathrm{H} \cdot_3 \bV_3^\mathrm{H} \in \K^{R_1 (\cT') \times R_2 (\cT') \times R_3 (\cT')}
\eeq
contains the same algebraic and geometric information as $\cT$, and we call this tensor an \df{orthogonal compression} of $\cT'$. One may recover $\cT'$ from $\cT$ using 
\beq
\label{eq:orthrecover}
\cT'=(\cT' \cdot_1 \bV_1^\mathrm{H} \cdot_2 \bV_2^\mathrm{H} \cdot_3 \bV_3^\mathrm{H}) \cdot_1 \bV_1 \cdot_2 \bV_2  \cdot_3 \bV_3.
\eeq

In equations \eqref{eq:orthcompr} and \eqref{eq:orthrecover} above, the product $\cT' \cdot_1 \bV_1^\mathrm{H}$ denotes the \df{mode-1 product} between the tensor $\cT'$ and the matrix $\bV_1^\mathrm{H}$. In general, the mode-1 product between a tensor $\cM' \in \K^{I_1 \times I_2 \times I_3}$ and a matrix $\bU \in \K^{J_1 \times I_1}$ is the tensor $\cM' \cdot_1 \bU \in \K^{J_1 \times I_2 \times I_3}$ with mode-$1$ fibers given by
\[
(\cM' \cdot_1 \bU)(:,k,\ell) = \bU(\cM' (:,k,\ell)) \qquad \mathrm{for \ all \ } k,\ell. 
\]
The mode-$2$ and mode-$3$ products $\cdot_2$ and $\cdot_3$ are defined analogously.

A common first step in CPD computation in applications is to compute\footnote{To compute an orthogonal compression or an (approximately) best low multilinear rank approximation of a tensor it is common to take the truncated core of a multilinear singular value decomposition of the tensor \cite{DDV00b}.}  the core of a multilinear $(R_1 (\cT'), R_2 (\cT'), R_3 (\cT'))$ rank approximation to $\cM'=\cT'+\cN' \in \K^{I_1 \times I_2 \times I_3}$, and it will be convenient for us to have notation for such tensors. Recalling that we typically assume $\cT'$ has multilinear rank $R \times R \times K$, we let $\cW'$ denote a tensor of size $I_1 \times I_2 \times I_3$ which has multilinear rank $(R,R,K)$ and we let $\cW \in \K^{R \times R \times K}$ be an orthogonal compression of $\cW'$. In particular, $\cW$ is thought of as the core of a low multilinear rank approximation to $\cM'$, while $\cW'$ itself is thought of as a low multilinear rank approximation to $\cM'$.

\subsubsection{Implicit subpencils of tensors}

We frequently consider subpencils of a given tensor. That is, we will consider pairs of matrices which are formed from the frontal slices of a given tensor. The subpencils considered in this article differ slightly from the standard terminology in that we allow subpencils which are ``revealed" by an orthogonal change of basis in the third mode of the tensor. We call such a subpencil an \df{implicit subpencil}.

Defined precisely, an implicit subpencil of $ \cS$ is an $R \times R \times 2$ tensor of the form $\cS = \cW \cdot_3 \bV \in \K^{R \times R \times 2}$ where $\bV \in \K^{2 \times K}$ has rank $2$ and has orthonormal rows. Since we always work with implicit subpencils, we drop the distinction ``implicit" in the remainder of the article.

\subsection{The joint generalized eigenvalue problem}
\label{sec:JGEdefs} 

 We now discuss the joint generalized eigenvalue problem in greater detail. Note that we will interchangeably view $\cT \in \K^{R \times R \times K}$ as either a tensor or a $K$-tuple of $R \times R$ matrices tuple via the map 
\[
\cT \to (\cT(:,:,1), \cT(:,:,2), \cdots ,\cT(:,:,K)). 
\]
For notational convenience we denote $\bT_k=\cT(:,:,k)$. 

Recall that given a tensor (or a matrix tuple) $\cT \in \K^{R \times R \times K}$, we call a nonzero vector $\bx \in \K^R$ a joint generalized eigenvector of $\cT$ if there exists a $\blam \in \K^K$ and a nonzero vector $\by \in \K^R$ such that
\beq
\label{eq:JGEVdef}
\bT_\ell \bx = \lam_\ell \by
\eeq
for all $\ell = 1, \dots, K$. Note that assuming $\by \neq \b0$ implies that $\lam_\ell=0$ if and only if $\bT_\ell \bx = \b0$.

There are natural scaling indeterminates in equation \eqref{eq:JGEVdef}. In particular, if the triple $(\bx,\blam,\by)$ is a solution to equation \eqref{eq:JGEVdef}, then so is $(\alpha \bx,\beta \blam, \gamma \by)$ for any nonzero scalars $\alpha,\beta,\gamma \in \K$ satisfying $\alpha  = \beta \gamma$. For this reason, if $(\blam,\bx)$ is a solution to \eqref{eq:JGEVdef}, then we say the one-dimensional subspace $\scrL:= \spann (\blam) \subset \K^K$ is a \df{joint generalized eigenvalue} (JGE value) of $\cT$ and that $(\scrL,\bx)$ is a \df{joint generalized eigenpair} of $\cT$. We typically use $\scrL \subset \K^K$ to denote a JGE value, while $\blam \in \K^K$ typically denotes a vector of unit norm such that $\spann(\blam)=\scrL$. Say a tensor has a \df{joint generalized eigenbasis} if its joint generalized eigenvectors form a basis for $\K^R$. We call the set of JGE values of a tensor the \df{(joint generalized) spectrum} of the tensor. An equivalent formulation of joint generalized eigenvectors is used in \cite{ZLP18} for blind source extraction.

We note that some authors have considered other notions of eigenvalues and eigenvectors for a tensor. These are not equivalent to and should not be confused with joint generalized eigenvalues. The eigenvalues and eigenvectors considered by other authors are intended to allow a spectral theory for general tensors. See \cite{S16,L05,QCC18}.

\subsection{Multiplicities of JGE values}
\label{sec:JGEmultdefs}
One of the main themes of this article is that low border rank tensors which have rank strictly greater than border rank can be identified with matrix tuples which are defective in the sense of algebraic and geometric multiplicities for joint generalized eigenvalues. These multiplicities we now define. To simplify the exposition, we only consider tensors of size $R \times R \times K$ when discussing multiplicities. While it is possible to define multiplicities of JGE values for general third order tensors, certain pathologies arise in the $I_1 \times I_2 \times I_3$ case which must be treated with care. For our purposes, it will not be necessary to consider multiplicities in the $I_1 \times I_2 \times I_3$ setting.

Given a tensor $\cT \in \K^{R \times R \times K}$, define the polynomial $p_\cT (\gamma)$ in the indeterminate $\bgam=(\gamma_1,\dots, \gamma_K)$ by
\[
p_\cT (\bgam):=\det \left( \sum_{k=1}^K \gamma_k \bT_k \right).
\]
We call $p_\cT (\bgam)$ the \df{characteristic polynomial} of $\cT$. 
Given a nonzero $\blam \in \K^K$, the \df{algebraic multiplicity} of $\scrL=\spann(\blam)$ as a JGE value of the tensor $\cT \in \K^{R \times R \times K}$, denoted $\am(\scrL)$, is the largest integer $m$ such that $\langle \blam, \bbgam \rangle ^m$ divides the characteristic polynomial of $\cT$. If $p_\cT$ factors as a product of $R$  (not necessarily distinct) linear terms, then we say that $\cT$ has $R$ JGE values counting algebraic multiplicity.

Define the \df{geometric multiplicity} of $\scrL$ as a JGE value of $\cT \in \K^{R \times R \times K}$, denoted $\gm(\scrL)$, to be the dimension of the span of the set of JGE vectors of $\cT$ which correspond to $\spann(\blam)$. That is,
\[
\gm(\scrL)= \dim \spann (\{\bx |\  (\scrL,\bx) \mathrm{\ is \ a \ JGE \ pair \ of \ } \cT\}).
\]
Similar to the cases of classical and generalized eigenvalues, the algebraic multiplicity of a JGE value is always greater than or equal to  the geometric multiplicity of said JGE value, see Proposition \ref{proposition:alggreatergeo}. However, we warn that it is possible for a JGE value to have nonzero algebraic multiplicity while having geometric multiplicity equal to zero, see Example \ref{exa:geomultzero}.

\begin{definition}
Say a tensor $\cT \in {R \times R \times K}$ is \df{defective} if it has a JGE value $\scrL$ with $\am(\scrL)>\gm(\scrL)$. 
\end{definition}

A degenerate case can occur in which $p_{\cT} (\bgam)$ is identically equal to zero. In this case, $\langle  \blam,\bbgam \rangle^m$ divides $p_{\cT} (\bgam)$ for all $\blam \in \K^K$ and for all integers $m$, hence we may reasonably say that $\spann (\blam)$ is a JGE value of $\cT$ with infinite algebraic multiplicity for all $\blam \in \K^K$. In this sense, if $p_{\cT} (\bgam) \equiv 0$ then the tensor $\cT$ is necessarily defective. 

It is straightforward to show that $p_{\cT} (\gamma)$ is not identically zero if and only if the span of the frontal slices of $\cT$ contains an invertible matrix. We call such a tensor a \df{slice mix invertible} tensor. See Example \ref{exa:pequiv0} and Lemma \ref{lemma:invertslice} for further discussion.

\begin{definition}
Say $\cT \in {R \times R \times K}$ is \df{simple} if $\cT$ is slice mix invertible and has $R$ distinct JGE values each having algebraic multiplicity equal to $1$. In other words, $\cT$ is simple if $p_{\cT}(\bgam)$ is not identically zero and factors as a product of $R$ distinct linear terms. 
\end{definition}

We are now in position to formally state the connection between the joint generalized eigenvalue decomposition and the canonical polyadic decomposition.

\begin{theorem}
\label{theorem:rankvsmult}
Let $\cT' \in \K^{I_1 \times I_2 \times I_3}$ and assume $\cT'$ has multilinear rank $(R,R,K)$. Let $\cT \in \K^{R \times R \times K}$ be an orthogonal compression of $\cT'$. Then we have the following.

\begin{enumerate}
\item \label{it:RankIFFBasis} $\cT'$ has $\K$-rank $R$ if and only if the JGE vectors of $\cT$ span $\K^R$.  Furthermore, if $\cT=\cpd$ has $\K$-rank $R$, then the JGE values of $\cT$ are equal to the spans of columns of the factor matrix $\bC$, and the columns of $\bB^{-\mathrm{T}}$ are JGE vectors of $\cT$.

\item \label{it:NonderogatoryImpliesRankR} If $\cT'$ has border $\K$-rank $R$ and $\cT$ is simple then $\cT'$ has $\K$-rank $R$.

\item \label{it:BRankLessRankImpliesDefective} If $\cT'$ has border $\K$-rank $R$ but $\K$-rank strictly greater than $R$ and if $\cT$ is slice mix invertible, then $\cT$ must have a JGE value $\scrL \subset \K^K$ which satisfies
\[
\am(\scrL) > \gm(\scrL) \geq 1.
\]
In this case, $\cT$ is defective in the sense of joint generalized eigenvalues. 
\end{enumerate} 
\end{theorem}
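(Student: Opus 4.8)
The plan is to reduce everything to the orthogonal compression $\cT\in\K^{R\times R\times K}$ and then exploit the joint generalized eigenvalue structure. Orthogonal compression preserves $\K$-rank and border $\K$-rank (a minimal CPD of a tensor of multilinear rank $(R,R,K)$ compresses through the $\bV_i^{\mathrm T}$-adjoints without annihilating any rank-one term, since the factor matrices' column spaces are exactly the mode fiber spaces, and conversely any CPD of the compression lifts back through the $\bV_i$), so it suffices to prove the three assertions for $\cT$, which now has full multilinear rank $(R,R,K)$. For item~\ref{it:RankIFFBasis}: if $\cT=[\![\bA,\bB,\bC]\!]$ has $\K$-rank $R$, then $\bA$ and $\bB$ are square and invertible (their column spaces contain the mode-$1$ and mode-$2$ fiber spaces, which equal $\K^R$), and from $\bT_k=\bA D_k(\bC)\bB^{\mathrm T}$ one reads off $\bT_k\,\bB^{-\mathrm T}(:,j)=\bC(k,j)\,\bA(:,j)$ for all $k$; thus each column of $\bB^{-\mathrm T}$ is a JGE vector with companion $\bA(:,j)\neq\b0$ and value $\spann(\bC(:,j))$, and these columns span $\K^R$. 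A short computation with the rank of $M(\bx):=[\,\bT_1\bx\mid\cdots\mid\bT_K\bx\,]=\bA\,\mathrm{diag}(\bB^{\mathrm T}\bx)\,\bC^{\mathrm T}$ further identifies the spectrum as exactly $\{\spann(\bC(:,j))\}_j$. Conversely, a JGE basis $\bx_1,\dots,\bx_R$ with companions $\by_j$ and value vectors $\blam_j$ yields $\bT_k=\bY D_k(\bC)\bX^{-1}$ with $\bX=[\bx_1|\cdots|\bx_R]$, $\bY=[\by_1|\cdots|\by_R]$, $\bC=[\blam_1|\cdots|\blam_R]$, i.e.\ $\cT=[\![\bY,\bX^{-\mathrm T},\bC]\!]$, so $\rank_\K(\cT)\le R$, while $\rank_\K(\cT)\ge R$ by \eqref{eq:RankVsMLRank}.

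The engine behind items~\ref{it:NonderogatoryImpliesRankR} and~\ref{it:BRankLessRankImpliesDefective} is the following limiting statement: \emph{if $\cT\in\K^{R\times R\times K}$ is slice mix invertible with border $\K$-rank $R$, then $p_\cT$ splits into $R$ linear forms $\langle\blam_j,\bbgam\rangle$ and each $\spann(\blam_j)$ is a genuine JGE value of $\cT$; consequently every JGE value of $\cT$ has geometric multiplicity $\ge1$ and $\sum_\scrL\am(\scrL)=R$.} To prove it, write $\cT=\lim_m\cT_m$ with $\rank_\K(\cT_m)=R$. Slice mix invertibility is an open condition, so for large $m$ we may take $\cT_m$ slice mix invertible, whence $\cT_m$ has multilinear rank $(R,R,K_m)$ and, by the computation in item~\ref{it:RankIFFBasis}, $\cT_m=[\![\bA_m,\bB_m,\bC_m]\!]$ with $\bA_m,\bB_m$ invertible and the columns of $\bC_m$ of unit norm. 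Along a subsequence $\bC_m(:,j)\to\blam_j$ and $\bx_j^{(m)}:=\bB_m^{-\mathrm T}(:,j)/\|\bB_m^{-\mathrm T}(:,j)\|\to\bx_j$ with $\|\bx_j\|=1$. Letting $m\to\infty$ in $p_{\cT_m}(\bgam)=\det(\bA_m)\det(\bB_m)\prod_j\big(\sum_k\gamma_k\bC_m(k,j)\big)$ and using $p_\cT\not\equiv0$ (a compactness argument pins $\det(\bA_m)\det(\bB_m)$ to a nonzero limit $c$) gives $p_\cT(\bgam)=c\prod_j\big(\sum_k\gamma_k(\blam_j)_k\big)$, so each $\langle\blam_j,\bbgam\rangle$ divides $p_\cT$ with the claimed multiplicity count. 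Finally, writing $\bT_k^{(m)}\bx_j^{(m)}=\bC_m(k,j)\,\by_j^{(m)}$ with $\by_j^{(m)}:=\bA_m(:,j)/\|\bB_m^{-\mathrm T}(:,j)\|$ and choosing $k_0$ with $(\blam_j)_{k_0}\neq0$, we get $\by_j^{(m)}=\bT_{k_0}^{(m)}\bx_j^{(m)}/\bC_m(k_0,j)\to\bT_{k_0}\bx_j/(\blam_j)_{k_0}=:\by_j$, hence $\bT_k\bx_j=(\blam_j)_k\by_j$ for all $k$; here $\by_j\neq\b0$, since otherwise $\bx_j\neq\b0$ would be a common kernel vector of the slice mix invertible tensor $\cT$, which Lemma~\ref{lemma:invertslice} forbids. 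So $(\spann(\blam_j),\bx_j)$ is a JGE pair of $\cT$. I expect this last step to be the main obstacle: it is precisely where ``diverging components'' of the sequence $\cT_m$ could make the eigenvector degenerate in the limit, and slice mix invertibility is exactly the hypothesis that prevents the companion $\by_j$ from collapsing to zero; the accompanying point that the complete splitting of $p_{\cT_m}$ survives in the limit is a standard properness/compactness fact about products of linear forms.

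The remaining ingredient is a linear-independence fact: JGE vectors of a slice mix invertible $\cT$ attached to pairwise distinct JGE values are linearly independent. Indeed, fixing $\bv_0$ with $\bT_{\bv_0}=\cT\cdot_3\bv_0$ invertible, any JGE vector $\bx$ with value vector $\blam$ and companion $\by$ satisfies $\bT_\bv\bx=\mu(\bv)\by$ with $\mu(\bv):=\sum_\ell v_\ell(\blam)_\ell$, whence $\bT_{\bv_0}^{-1}\bT_\bv\,\bx=\frac{\mu(\bv)}{\mu(\bv_0)}\,\bx$ (and $\mu(\bv_0)\neq0$ since $\bT_{\bv_0}\bx\neq\b0$); thus for generic $\bv$ the JGE vectors attached to distinct JGE values are eigenvectors of the single matrix $\bT_{\bv_0}^{-1}\bT_\bv$ for pairwise distinct eigenvalues, and independent sets chosen within each value remain independent overall. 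Now item~\ref{it:NonderogatoryImpliesRankR}: a simple $\cT$ has $R$ distinct JGE values, each of algebraic multiplicity $1$, each of geometric multiplicity $\ge1$ by the limiting statement, hence exactly $1$ by Proposition~\ref{proposition:alggreatergeo}; one JGE vector per value gives a JGE basis by the independence fact, so $\cT$ (hence $\cT'$) has $\K$-rank $R$ by item~\ref{it:RankIFFBasis}. And item~\ref{it:BRankLessRankImpliesDefective}: if $\cT$ were not defective, then $\gm(\scrL)=\am(\scrL)$ for every JGE value $\scrL$ (Proposition~\ref{proposition:alggreatergeo}), so $\sum_\scrL\gm(\scrL)=\sum_\scrL\am(\scrL)=R$ by the limiting statement; assembling bases of JGE vectors over the distinct values via the independence fact yields a JGE basis, forcing $\K$-rank $R$ by item~\ref{it:RankIFFBasis} and contradicting $\rank_\K(\cT)>R$. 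Therefore $\cT$ has a JGE value $\scrL$ with $\am(\scrL)>\gm(\scrL)$, and $\gm(\scrL)\ge1$ by the limiting statement, which completes the proof.
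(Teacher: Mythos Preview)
Your proof is correct and follows essentially the same route as the paper: the paper proves Item~\ref{it:RankIFFBasis} by the same direct computation, then derives Items~\ref{it:NonderogatoryImpliesRankR} and~\ref{it:BRankLessRankImpliesDefective} from the splitting of $p_\cT$ for border-rank-$R$ tensors (the paper's Proposition~\ref{proposition:alggreatergeo}) together with the fact that each JGE value has $\gm\ge 1$ (the paper's Lemma~\ref{lemma:BorderRankGeo}), which you combine into your single ``limiting statement.'' The only cosmetic differences are that the paper proves the linear-independence lemma by passing to a $2$-slice subpencil rather than to the single matrix $\bT_{\bv_0}^{-1}\bT_\bv$, and that your citation of Lemma~\ref{lemma:invertslice} for ``no common kernel'' is slightly off (that fact follows directly from slice mix invertibility, not from Lemma~\ref{lemma:invertslice}).
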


\begin{proof}
The proof of Theorem \ref{theorem:rankvsmult} is given in Section \ref{sec:rankvsmultproof}.
\end{proof}

Theorem \ref{theorem:rankvsmult} leads to a strategy for determining the radius of an open ball centered at a given rank $R$ tensor in which best $\K$-rank $R$ approximations are guaranteed to exist. In particular, it is sufficient to guarantee that nearby border $\K$-rank $R$ tensors are simple.

\begin{proposition}
\label{proposition:existenceCor}
Let $\cT' \in \K^{I_1 \times I_2 \times I_3}$ and assume $\cT'$ has $\K$-rank $R$ and multilinear rank $(R,R,K)$, and let $\cT \in \K^{R \times R \times K}$ be an orthogonal compression of $\cT'$. If every border $\K$-rank $R$ tensor in an open ball of Frobenius radius $\epsilon$ around $\cT$ is simple, then every tensor in an open ball of radius $\epsilon/2$ around $\cT'$ has a best $\K$-rank $R$ approximation. In addition, every $\K$-rank $R$ tensor inside the open ball of Frobenius radius $\epsilon$ centered at $\cT'$ has a unique CPD. Furthermore, the set of tensors inside this open ball for which the best rank $R$ approximation is not unique has Lebesgue measure zero.

In particular, with these assumptions the ball of radius $\epsilon$ around $\cT'$ does not contain a tensor with border $\K$-rank $R$ and $\K$-rank strictly greater than $R$.
\end{proposition}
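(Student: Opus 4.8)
The plan is to treat the final (``in particular'') assertion as the technical core and to derive everything else from it together with Theorem~\ref{theorem:rankvsmult} and standard facts about metric projections. First I would note that $\cT'$ has border $\K$-rank exactly $R$: having $\K$-rank $R$ it has border rank at most $R$, while \eqref{eq:BRankVsMLRank} and the assumption that $\cT'$ has multilinear rank $(R,R,K)$ force the border rank to be at least $R$. Since the border $\K$-rank $\le R$ tensors form a closed set, every $\cM'$ has a best border $\K$-rank $R$ approximation $\hcT'$; and if $\|\cM'-\cT'\|_\mathrm{F}<\epsilon/2$ then, using $\cT'$ as a competitor, $\|\cM'-\hcT'\|_\mathrm{F}\le\|\cM'-\cT'\|_\mathrm{F}<\epsilon/2$, so $\|\hcT'-\cT'\|_\mathrm{F}<\epsilon$. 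Thus $\hcT'$ is a border $\K$-rank $R$ tensor in the radius-$\epsilon$ ball about $\cT'$; granting the ``in particular'' assertion it has $\K$-rank $R$, hence is a best $\K$-rank $R$ approximation of $\cM'$. (A minor point: by lower semicontinuity of the ranks of the mode-$1$ and mode-$2$ unfoldings, tensors near $\cT'$ have multilinear, hence border, rank at least $R$, so a best border $\K$-rank $R$ approximation of a tensor near $\cT'$ indeed has border rank exactly $R$.)

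To prove the ``in particular'' assertion I would reduce to the $R\times R\times K$ setting in which the hypothesis is phrased. Let $\cX'$ lie in the radius-$\epsilon$ ball about $\cT'$ with border $\K$-rank $R$, and suppose toward a contradiction that $\rank_\K(\cX')>R$. An orthogonal compression preserves both rank and border rank, and any orthogonal compression $\cT$ of $\cT'$ is admissible for the hypothesis, since simplicity, rank, border rank, and the ball structure are all invariant under unitary changes of orthonormal basis in each mode. The point is to compress the \emph{pair} $(\cT',\cX')$ into the $R\times R\times K$ format so that the compressed tensors stay within Frobenius distance $\|\cX'-\cT'\|_\mathrm{F}<\epsilon$ of each other; this amounts to choosing the orthonormal bases of the fiber spans of $\cT'$ and $\cX'$ so as to align them, i.e., a tensor Procrustes problem (cf. Theorem~\ref{theorem:TensorProcrustes}). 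Once a compression $\cX\in\K^{R\times R\times K}$ of $\cX'$, of border $\K$-rank $R$, is obtained in the radius-$\epsilon$ ball about the corresponding compression $\cT$ of $\cT'$, the hypothesis gives that $\cX$ is simple, and then Theorem~\ref{theorem:rankvsmult}\eqref{it:NonderogatoryImpliesRankR} (applied to $\cX'$ with orthogonal compression $\cX$) forces $\rank_\K(\cX')=R$, contradicting the assumption.

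It remains to dispatch the other claims. Existence of best $\K$-rank $R$ approximations in the radius-$\epsilon/2$ ball is the combination of the first two paragraphs. For uniqueness of the CPD, suppose $\cX'$ in the radius-$\epsilon$ ball has $\K$-rank $R$; then it has border $\K$-rank $R$, so an orthogonal compression $\cX\in\K^{R\times R\times K}$ of it is simple by hypothesis, and by Theorem~\ref{theorem:rankvsmult}\eqref{it:RankIFFBasis} (applied to $\cX'$, with orthogonal compression $\cX$) any CPD $\cX=[\![\bA,\bB,\bC]\!]$ has the spans of the columns of $\bC$ equal to the JGE values of $\cX$ and the columns of $\bB^{-\mathrm{T}}$ equal to JGE vectors; since $\cX$ is simple these are $R$ distinct lines and $R$ one-dimensional eigenspaces, so $\bC$, then $\bB$, then $\bA$ are determined up to a common column permutation and scaling, i.e., the CPD of $\cX$, and hence of $\cX'$, is unique. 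Finally, for the measure-zero claim I would invoke the standard fact that the metric projection onto a closed set --- here the set of border $\K$-rank $\le R$ tensors --- is single-valued outside a Lebesgue-null set, and combine it with the ``in particular'' assertion, which makes best border $\K$-rank $R$ and best $\K$-rank $R$ approximations coincide, inside the ball, wherever the latter exists.

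The main obstacle is the simultaneous orthogonal compression in the second paragraph: producing, for $\cX'$ near $\cT'$, a joint orthogonal compression of $\cT'$ and $\cX'$ to the $R\times R\times K$ format that does not increase their Frobenius distance --- precisely the role of the tensor Procrustes result. A secondary technical point is controlling the multilinear rank of border $\K$-rank $R$ tensors near $\cT'$ (which may be slightly deficient in the third mode) so that such a compression genuinely lands in $\K^{R\times R\times K}$.
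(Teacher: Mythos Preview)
Your approach is essentially the paper's: best border $\K$-rank $R$ approximation $\hcT'$, triangle inequality to place $\hcT'$ in the $\epsilon$-ball, Procrustes compression (Theorem~\ref{theorem:TensorProcrustes}) into $\K^{R\times R\times K}$, then simplicity and Theorem~\ref{theorem:rankvsmult}\eqref{it:NonderogatoryImpliesRankR}. The organization (proving the ``in particular'' first) is different but inessential; your uniqueness-of-CPD argument directly from the JGE data is a fine alternative to the paper's appeal to Kruskal's condition.

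There is, however, a genuine gap in your ``minor point''. You need $\hcT'$ to have border $\K$-rank \emph{exactly} $R$ so that the hypothesis (which speaks only of border $\K$-rank $R$ tensors near $\cT$) applies after compression. Lower semicontinuity of the ranks of the mode-$1,2$ unfoldings gives this only for tensors in some unspecified neighborhood of $\cT'$, not in the whole $\epsilon$-ball, and in any case it tells you about $\cM'$, not about $\hcT'$. The paper closes this gap differently: it shows that the $\epsilon$-ball about $\cT$ can contain no tensor of border $\K$-rank strictly less than $R$, by arguing that if some $\cS'$ in the ball had border rank $N<R$, one could perturb it to a rank-$N$ tensor with full-column-rank first two factors, pad the factors to $R$ columns with a repeated column in $\bC$, and obtain a border $\K$-rank $R$ tensor in the ball that is \emph{not} simple (the repeated $\bC$-column forces a JGE value of multiplicity $\ge 2$), contradicting the hypothesis. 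This construction, not lower semicontinuity, is what makes the argument go through for the given $\epsilon$.
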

\begin{proof} See Section \ref{sec:existenceCorProof}.
\end{proof}

 As previously mentioned, the existence of some neighborhood in which best rank $R$ approximations exist is known. The main motivation for Proposition \ref{proposition:existenceCor} is that a lower bound for the radius of a ball  centered at a simple rank $R$ tensor in which border rank $R$-tensors are simple can be computed using perturbation theoretic results for joint generalized eigenvalues, e.g., see Theorems \ref{theorem:MultiplePencilBound} and \ref{theorem:TensMDBound}. 

We also note that \cite{QML19, FO14} together show that the set of tensors which do not have a unique best border $\K$-rank $R$ approximation has measure $0$. The uniqueness of best $\K$-rank $R$ approximations discussed in Proposition \ref{proposition:existenceCor} follows quickly from these results after having shown existence of best $\K$-rank $R$ approximations.\footnote{In fact it is known that there is a neighborhood in which best rank $R$ approximations exist \textit{and are unique}. In regards to uniqueness, the conclusion of Proposition \ref{proposition:existenceCor} is slightly weaker; however, the fact that a lower bound for $\epsilon$ is computable is a significant advantage.}

\subsection{Deterministic bounds for existence of a best rank $R$ approximation}

We now explain how to compute the $\epsilon$ appearing in Proposition \ref{proposition:existenceCor}. For general $K$, Proposition \ref{proposition:DefectiveSubpencils} shows that if a tensor is defective, then any subpencil of the tensor must be defective. Hence by examining any given subpencil, one may use Theorem \ref{theorem:TensMDBound} to compute the radius of a ball around a given tensor in which best $\K$-rank $R$ approximations exist. A bound obtained from a single pencil may then be improved by combining information from a collection of non-overlapping subpencils of the tensor. In this way one arrives at the following multiple pencil based bound for the existence of a best $\K$-rank $R$ approximation.

\begin{theorem}
\label{theorem:MultiplePencilBound}
Let $\cT' \in \K^{I_1 \times I_2 \times I_3}$ be a tensor of $\K$-rank $R$ and multilinear rank $(R,R,K)$, and let $\cT \in \K^{R \times R \times K}$ be an orthogonal compression of $\cT'$. Let $\bU \in \K^{K \times K}$ be a unitary matrix and set $\cS = \cT \cdot_3 \bU$. For each $i=1,\dots, \lfloor K/2 \rfloor$, let $\epsilon_i \geq 0$ be a nonnegative real number such that every matrix pencil in an open Frobenius norm ball of radius $\epsilon_i$ around the pencil
\[
(\bS_{2i-1},\bS_{2i})
\]
is simple and set $\epsilon = ||(\epsilon_1, \dots, \epsilon_{ \lfloor K/2 \rfloor})||_2$. Then every tensor in an open ball of Frobenius radius $\epsilon/2$ centered at $\cT$ has a best $\K$-rank $R$ approximation and every rank $R$ tensor inside the open ball of Frobenius radius $\epsilon$ centered at $\cT$ has a unique CPD. Furthermore, the set of tensors inside this open ball for which the best rank $R$ approximation is not unique has Lebesgue measure zero.

In particular, each $\epsilon_i$ may be computed using Theorem \ref{theorem:TensMDBound}.
\end{theorem}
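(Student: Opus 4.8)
The plan is to derive the theorem from Proposition~\ref{proposition:existenceCor}, with $\cT$ itself playing the role of the rank $R$ tensor there. Since $\cT$ is an orthogonal compression of $\cT'$, it has $\K$-rank $R$ and multilinear rank $(R,R,K)$, and a tensor of that size and multilinear rank is an orthogonal compression of itself; hence Proposition~\ref{proposition:existenceCor} applies to $\cT$. Consequently, all three conclusions of the theorem --- existence of a best $\K$-rank $R$ approximation throughout the ball of radius $\epsilon/2$, uniqueness of the CPD of every rank $R$ tensor in the ball of radius $\epsilon$, and the measure-zero statement --- will follow at once, provided I establish the single claim that \emph{every border $\K$-rank $R$ tensor $\hcT \in \K^{R \times R \times K}$ with $\|\hcT - \cT\|_{\mathrm F} < \epsilon$ is simple.}

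To prove this claim, fix such an $\hcT$ and set $\hcS := \hcT \cdot_3 \bU$. The mode-$3$ product by the unitary matrix $\bU$ preserves the Frobenius norm, so $\|\hcS - \cS\|_{\mathrm F} = \|\hcT - \cT\|_{\mathrm F} < \epsilon$. Splitting this norm over the $\lfloor K/2 \rfloor$ disjoint pairs of frontal slices and writing $d_i := \bigl\| \bigl( \hcS(:,:,2i-1) - \bS_{2i-1},\ \hcS(:,:,2i) - \bS_{2i} \bigr) \bigr\|_{\mathrm F}$, one obtains
\[
\sum_{i=1}^{\lfloor K/2 \rfloor} d_i^2 \ \leq\ \|\hcS - \cS\|_{\mathrm F}^2 \ <\ \epsilon^2 \ =\ \sum_{i=1}^{\lfloor K/2 \rfloor} \epsilon_i^2 ,
\]
so $d_i \geq \epsilon_i$ cannot hold for every $i$; hence there is an index $i^\ast$ with $d_{i^\ast} < \epsilon_{i^\ast}$, and by hypothesis the matrix pencil $\bigl( \hcS(:,:,2i^\ast-1),\, \hcS(:,:,2i^\ast) \bigr)$ is simple.

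The heart of the argument is to promote simplicity of this one subpencil to simplicity of $\hcT$. Let $\bV \in \K^{2 \times K}$ select rows $2i^\ast-1$ and $2i^\ast$; then the chosen pencil equals $\hcS \cdot_3 \bV = \hcT \cdot_3 (\bV \bU)$, and $\bV \bU$ has orthonormal rows, so this is an implicit subpencil of $\hcT$. Since the subpencil is simple, its slice span contains an invertible $R \times R$ matrix; being a linear combination of the frontal slices of $\hcS$, hence of those of $\hcT$, this matrix forces $R_1(\hcT) = R_2(\hcT) = R$, so $\hcT$ is slice mix invertible with full mode-$1$ and mode-$2$ multilinear rank (and, after an orthogonal compression of the third mode which alters neither the joint generalized spectrum nor defectiveness, we may assume $\hcT$ has multilinear rank $(R,R,K)$). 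A simple pencil is nondefective, so Proposition~\ref{proposition:DefectiveSubpencils} in contrapositive form shows $\hcT$ is nondefective, whence Theorem~\ref{theorem:rankvsmult}\,(\ref{it:BRankLessRankImpliesDefective}), applied to $\hcT$, forces $\rank_\K(\hcT) = R$. Fix a CPD $\hcT = [\![\hat{\bA}, \hat{\bB}, \hat{\bC}]\!]$, whose factor matrices $\hat{\bA}, \hat{\bB} \in \K^{R \times R}$ are then invertible; a direct computation gives
\[
p_{\hcT}(\bgam) = \det(\hat{\bA}) \det(\hat{\bB}) \prod_{r=1}^R \langle \hat{\bC}(:,r),\, \bbgam \rangle ,
\]
and the same computation applied to $\hcT \cdot_3 (\bV \bU)$, whose third factor matrix is $(\bV \bU) \hat{\bC}$, shows its characteristic polynomial is $\det(\hat{\bA}) \det(\hat{\bB}) \prod_{r} \langle (\bV \bU) \hat{\bC}(:,r),\, \bbgam \rangle$. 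If two columns of $\hat{\bC}$ spanned the same line, their images under $\bV \bU$ would be proportional, making this last polynomial either identically zero or divisible by the square of a linear form --- in either case the subpencil would not be simple, a contradiction. Hence the columns of $\hat{\bC}$ span $R$ distinct lines, $p_{\hcT}$ factors into $R$ distinct linear forms, $\hcT$ is simple, and the claim --- hence the theorem --- is proved.

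Finally, for the closing assertion: whenever the $i$-th subpencil $(\bS_{2i-1}, \bS_{2i})$ is itself simple --- which, when $\cT$ is simple, holds for generic $\bU$ --- Theorem~\ref{theorem:TensMDBound} supplies an explicit positive $\epsilon_i$ below which every nearby pencil stays diagonalizable and, via the matching-distance part of that theorem, keeps its $R$ eigenvalues distinct, hence stays simple; for the remaining indices one takes $\epsilon_i = 0$. (For $K = 2$ and $\bU$ the identity this recovers the explicit value in Theorem~\ref{theorem:MultiplePencilBoundSimple}.) The one genuinely delicate step above is the transfer in the third paragraph: verifying that $\hcT$ meets the hypotheses of Theorem~\ref{theorem:rankvsmult} and, above all, upgrading ``nondefective'' all the way to ``simple'' by pulling distinctness of the joint generalized eigenlines back through $\bV \bU$. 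The pigeonhole over subpencils producing the index $i^\ast$ is, by comparison, routine.
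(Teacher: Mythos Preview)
Your approach is essentially the paper's: verify the hypothesis of Proposition~\ref{proposition:existenceCor} by using the pigeonhole over frontal-slice pairs to locate a simple subpencil, then lift simplicity to the full tensor via Proposition~\ref{proposition:DefectiveSubpencils}. The paper differs only cosmetically --- it works in the ambient space $\K^{I_1\times I_2\times I_3}$ and invokes the Procrustes compression of Theorem~\ref{theorem:TensorProcrustes} before the pigeonhole, then applies Proposition~\ref{proposition:existenceCor} at the end.

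One imprecision in your third paragraph: Proposition~\ref{proposition:DefectiveSubpencils} concerns only \emph{algebraic} multiplicity, so its contrapositive yields ``$\hcT$ has no JGE value of algebraic multiplicity $\geq 2$,'' not ``$\hcT$ is nondefective'' --- a JGE value can have algebraic multiplicity~$1$ and geometric multiplicity~$0$ (Example~\ref{exa:geomultzero}). Your detour through Theorem~\ref{theorem:rankvsmult}\,(\ref{it:BRankLessRankImpliesDefective}), a CPD, and the column-distinctness argument is therefore both under-justified at that step and unnecessary: once $\hcT$ is slice mix invertible with border $\K$-rank $R$ and every algebraic multiplicity is at most~$1$, Proposition~\ref{proposition:alggreatergeo} already forces $p_{\hcT}$ to split into $R$ (hence distinct) linear factors, so $\hcT$ is simple and you are done. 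The paper takes exactly this shortcut, writing only that Proposition~\ref{proposition:DefectiveSubpencils} ``allows us to conclude that the tensor $\hcT$ is simple.'' (If you prefer to keep your route, inserting Lemma~\ref{lemma:BorderRankGeo} to obtain geometric multiplicity $\geq 1$ repairs the gap --- but at that point the direct route is already finished.)
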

\begin{proof}
See Section \ref{sec:MultiplePencilBoundProof}.
\end{proof}

\begin{remark}
As an alternative to Theorem \ref{theorem:TensMDBound}, one could use \cite[Corollary 3]{DK87} to compute the  individual $\epsilon_i$ appearing in Theorem \ref{theorem:MultiplePencilBound}. We observe in experiments that the  $\epsilon_i$ computed by Theorem \ref{theorem:TensMDBound} are typically about the same or slightly larger than those computed using \cite[Corollary 3]{DK87}. Additionally, it is much more computationally intensive to obtain the $\epsilon_i$ using \cite[Corollary 3]{DK87}. However, for particularly ill-conditioned tensors, \cite[Corollary 3]{DK87} may outperform Theorem \ref{theorem:TensMDBound}.
\end{remark}

Though Theorem \ref{theorem:MultiplePencilBound} explains how to compute a lower bound for the radius of a neighborhood around a given rank $R$ tensor in which best rank $R$ approximations are guaranteed to exist, this result may be difficult to directly use in applications as one will not have access to the low rank signal tensor one is interested in. 

To remedy this difficulty, we present a variation on Theorem \ref{theorem:TensMDBound} which can be directly applied to (a low multilinear rank approximation of) a measured tensor. This variation, Theorem \ref{theorem:MLSVDExistenceBound}, only requires access to the measured tensor to use.

\begin{theorem}
\label{theorem:MLSVDExistenceBound}

Given $\cM' \in \K^{I_1 \times I_2 \times I_3}$ and an integer $R \leq \min\{I_1,I_2\}$, set $K=\min\{R_3(\cM'),R\}$. Let $\cW'$ be a multilinear rank $(R,R,K)$ approximation of $\cM'$ and let $\cW \in \K^{R \times R \times K}$ be an orthogonal compression of $\cW'$. Also let $\bU \in \K^{K \times K}$ be a unitary matrix and set $\cS = \cW \cdot_3 \bU$. For each $i=1,\dots, \lfloor K/2 \rfloor$, let $\epsilon_i \geq 0$ be a nonnegative real number such that every matrix pencil in an open Frobenius norm ball of radius $\epsilon_i$ around the pencil
\[
(\bS_{2i-1},\bS_{2i})
\]
is simple and set $\epsilon = ||(\epsilon_1, \dots, \epsilon_{ \lfloor K/2 \rfloor})||_2$. If there exists some $\K$-rank $R$ tensor $\tcT' \in \K^{I_1 \times I_2 \times I_3} $ such that 
\beq
\label{eq:MLSVDexistIneq}
\|\cM'-\tcT'\|_\mathrm{F} < \epsilon - \|\cM'-\cW'\|_\mathrm{F}
\eeq 
then $\cM'$ has a best $\K$-rank $R$ approximation and  any best $\K$-rank $R$ approximation of $\cM'$ has a unique CPD. Furthermore, the set of tensors $\cM'$ which satisfy these assumptions and do not have a unique best $\K$-rank $R$ approximation has Lebesgue measure zero.

In particular, each $\epsilon_i$ may be computed using Theorem \ref{theorem:TensMDBound}.
\end{theorem}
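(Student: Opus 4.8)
The plan is to follow the proof of Theorem~\ref{theorem:MultiplePencilBound} almost verbatim, with the low-multilinear-rank approximation $\cW'$ playing the role that the exact rank-$R$ signal tensor plays there and with the hypothesized rank-$R$ tensor $\tcT'$ entering only through a triangle inequality. Since the set of tensors of border $\K$-rank at most $R$ is closed, $\cM'$ has a best border-$\K$-rank-$R$ approximation $\hcT'$, and it suffices to show $\hcT'$ has $\K$-rank $R$: lying in the closure of the rank-$\le R$ tensors and attaining the minimal residual, such a $\hcT'$ is automatically a best $\K$-rank $R$ approximation of $\cM'$. First I would record the distance estimate: because $\tcT'$ has $\K$-rank $R$, hence border rank at most $R$, we get $\|\cM'-\hcT'\|_\mathrm{F}\le\|\cM'-\tcT'\|_\mathrm{F}<\epsilon-\|\cM'-\cW'\|_\mathrm{F}$, so by the triangle inequality $\|\cW'-\hcT'\|_\mathrm{F}\le\|\cW'-\cM'\|_\mathrm{F}+\|\cM'-\hcT'\|_\mathrm{F}<\epsilon$. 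I would also note that a best border-$\K$-rank-$R$ approximation has all of its mode-$i$ fibers in $\Range(\cM'_{(i)})$ (otherwise projecting onto these ranges in each mode keeps the border rank at most $R$ while strictly shrinking the residual), so $\hcT'$ has multilinear rank at most $(R,R,K)$.

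Next I would pass to the $R\times R\times K$ setting by applying the tensor Procrustes theorem, Theorem~\ref{theorem:TensorProcrustes}, to the pair $(\cW',\hcT')$. This produces orthogonal compressions of $\cW'$ and of $\hcT'$ of common size $R\times R\times K$ whose Frobenius distance is at most $\|\cW'-\hcT'\|_\mathrm{F}<\epsilon$. Since any two orthogonal compressions of $\cW'$ differ only by unitaries in each mode, I may absorb these unitaries into the compression of $\hcT'$ and take the compression of $\cW'$ to be the tensor $\cW$ fixed in the statement; call the resulting compression of $\hcT'$ simply $\hcT$, so that $\|\cW-\hcT\|_\mathrm{F}<\epsilon$. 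Orthogonal compression preserves $\K$-rank and does not increase border rank, so $\rank_\K(\hcT)=\rank_\K(\hcT')$ and $\K\text{-}\underline{\rank}(\hcT)\le R$. Applying the unitary $\bU$ in the third mode to both tensors is Frobenius-isometric, so with $\cS=\cW\cdot_3\bU$ and $\hcS=\hcT\cdot_3\bU$ one has $\|\cS-\hcS\|_\mathrm{F}<\epsilon$.

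The core of the argument is the subpencil pigeonholing. Restricting to the $\lfloor K/2\rfloor$ nonoverlapping frontal subpencils and summing slice by slice,
\[
\sum_{i=1}^{\lfloor K/2\rfloor}\Big(\|\hcS_{2i-1}-\bS_{2i-1}\|_\mathrm{F}^2+\|\hcS_{2i}-\bS_{2i}\|_\mathrm{F}^2\Big)\;\le\;\|\hcS-\cS\|_\mathrm{F}^2\;<\;\epsilon^2\;=\;\sum_{i=1}^{\lfloor K/2\rfloor}\epsilon_i^2,
\]
so for at least one index $i$ the pencil $(\hcS_{2i-1},\hcS_{2i})$ lies within Frobenius distance $\epsilon_i$ of $(\bS_{2i-1},\bS_{2i})$, hence is simple by the choice of $\epsilon_i$. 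This is a subpencil of $\hcT$, so Proposition~\ref{proposition:DefectiveSubpencils} forces $\hcT$ to be non-defective; in particular $p_{\hcT}\not\equiv0$, i.e.\ $\hcT$ is slice mix invertible. Consequently $\hcT$ cannot be a limit of tensors of rank $<R$ (for the $\bv$ with $\hcT\cdot_3\bv$ invertible, lower semicontinuity of matrix rank would otherwise give $\rank(\hcT\cdot_3\bv)<R$), so $\K\text{-}\underline{\rank}(\hcT)=R$, and item~\ref{it:BRankLessRankImpliesDefective} of Theorem~\ref{theorem:rankvsmult} applied to $\hcT$ then yields $\rank_\K(\hcT)\le R$. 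Therefore $\rank_\K(\hcT)=\rank_\K(\hcT')=R$, proving that $\cM'$ has a best $\K$-rank $R$ approximation.

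For the CPD and measure-zero statements I would argue as follows. Every best $\K$-rank $R$ approximation of $\cM'$ is also a best border-$\K$-rank-$R$ approximation, so the argument above applies to it; writing its compression as $\hcT=[\![\bA,\bB,\bC]\!]$ via item~\ref{it:RankIFFBasis} of Theorem~\ref{theorem:rankvsmult}, slice mix invertibility makes $\bA,\bB$ invertible, whence $p_{\hcT}(\bgam)=\det(\bA)\det(\bB)\prod_{r=1}^R\langle\bc_r,\bbgam\rangle$. Since the subpencil produced above is simple, restricting these linear forms to the corresponding $2$-plane keeps them pairwise non-proportional, so the columns of $\bC$ are pairwise non-parallel; hence $\hcT$ is simple and therefore has a unique CPD, which lifts through the recovery isometries to a unique CPD of $\hcT'$. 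Finally, once existence is known, a best $\K$-rank $R$ approximation of $\cM'$ coincides with a best border-$\K$-rank-$R$ approximation, and by \cite{QML19,FO14} the set of tensors for which the latter is non-unique has Lebesgue measure zero. The step I expect to be the main obstacle is the passage to the $R\times R\times K$ setting: compressing the pair $(\cW',\hcT')$ to a common size without enlarging their distance while keeping $\rank_\K(\hcT)=\rank_\K(\hcT')$, which is precisely the content of Theorem~\ref{theorem:TensorProcrustes}, and then verifying that $\hcT$ is slice mix invertible so that Theorem~\ref{theorem:rankvsmult} applies; after that the pigeonholing is the same computation already used for Theorem~\ref{theorem:MultiplePencilBound}.
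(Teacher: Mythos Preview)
Your proposal is correct and follows essentially the same route as the paper's proof: triangle inequality to bound $\|\cW'-\hcT'\|_\mathrm{F}<\epsilon$, tensor Procrustes compression, the pigeonhole on subpencils, and Proposition~\ref{proposition:DefectiveSubpencils} to conclude. One small wording issue: Proposition~\ref{proposition:DefectiveSubpencils} yields that $\hcT$ is \emph{simple} (no JGE value has algebraic multiplicity $\geq 2$), not directly that it is non-defective; a simple tensor can still have a JGE value with $\am=1>\gm=0$ (cf.\ Example~\ref{exa:geomultzero}). This does not break your argument, since the contrapositive of item~\eqref{it:BRankLessRankImpliesDefective} only needs ``no JGE value with $\am>\gm\geq 1$'', which simplicity certainly gives; the paper avoids this detour by invoking item~\eqref{it:NonderogatoryImpliesRankR} (simple $+$ border rank $R\Rightarrow$ rank $R$) directly. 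Also, your justification that $\hcT'$ has multilinear rank at most $(R,R,K)$ in modes $1$ and $2$ really comes from the border-rank bound \eqref{eq:BRankVsMLRank}, not from the fiber-projection argument (which only controls mode $3$); the paper accordingly argues only about mode $3$.
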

\begin{proof}
The proof of Theorem \ref{theorem:MLSVDExistenceBound} is nearly identical to the proof of Theorem \ref{theorem:MultiplePencilBound}. For completeness, details of the proof are given in the supplementary materials.
\end{proof}

Theorem \ref{theorem:MLSVDExistenceBound} is most naturally applicable to tensors which have low or approximately low multilinear rank. Indeed if this theorem is applied to a tensor which does not approximately have low multilinear rank, then the right hand side of equation \eqref{eq:MLSVDexistIneq} can be negative in which case no conclusion can be made.

However, as previously discussed, a common approach to computing a CPD in application is to first compute a core $\cW$ of a low multilinear rank approximation of $\cM'$, then to compute a low rank CPD approximation of $\cW$. In this approach, a critical question to answer is whether or not the core tensor $\cW$ has a best low rank approximation. To address this question, Theorem \ref{theorem:MLSVDExistenceBound} can be directly applied to $\cW$. Since $\cW$ already has low multilinear rank, if one uses Theorem \ref{theorem:MLSVDExistenceBound} in this way, then inequality \eqref{eq:MLSVDexistIneq} simplifies to checking if there is some $\K$-rank $R$ tensor $\tcT \in \K^{R \times R \times K}$ such that
\[
\|\cW-\tcT\|_\mathrm{F} < \epsilon
\]
where $\epsilon$ is computed as described in the theorem statement.
The tensor $\tcT$ used in this theorem can for instance be obtained by applying any standard CPD algorithm to $\tcT$. The resulting approximation error being bounded by $\epsilon$ is sufficient to make sure the approximation problem for $\cW$ is well-posed. The fact that $\tcT$ itself may not be optimal is not an issue. 

Numerical experiments which illustrate Theorem \ref{theorem:MLSVDExistenceBound} are given in Section \ref{sec:existenceImagesBigDim}.

\subsection{Deterministic bounds when $K=2$}
The following theorem explains how to compute the $\epsilon_i$ appearing in Theorems \ref{theorem:MultiplePencilBound} and \ref{theorem:MLSVDExistenceBound}. Before stating the result we give three brief definitions. Given two one-dimensional subspaces $\scrL_1,\scrL_2 \subseteq \K^K$, the \df{chordal metric}\footnote{The chordal metric is a popular choice of metric for studying generalized eigenvalues as it accounts for the natural scaling invariance in the generalized eigenvalue problem. See e.g. \cite[Chapter VI]{SS90}.} between $\scrL_1$ and $\scrL_2$, denoted $\chi(\scrL_1,\scrL_2)$, is equal to the sine of the angle between $\scrL_1$ and $\scrL_2$. 

Next, for $i=1,2$ let $\cT_i \in \K^{R \times R \times K}$ be a tensor which has $R$ JGE values counting algebraic multiplicity and let $\{\scrL_{i,r}\}_{r=1}^R$ be the spectrum of $\cT_i$. Define the \df{matching distance} between $\cT_1$ and $\cT_2$, denoted $\md[\cT_1,\cT_2]$, by 
\beq
\label{eq:MatchingDistanceDef}
\md[\cT_1,\cT_2]= \min_{\pi \in 
S_R} \max_{k=1, \dots, K} \chi(\scrL_{1,k}, \scrL_{2,\pi (k)}).
\eeq
Here $S_R$ is the group of permutations of $\{1, \dots, R\}$. We note that the matching distance is a metric between the spectrum $\cT_1$ and the spectrum of $\cT_2$.

Finally, for a tensor $\cM' \in K^{I_1 \times I_2 \times I_3}$, define the \df{spectral norm} of $\cM'$, denoted $\spnorm{\cM'}$  by
\[
\|\cM'\|_{\mathrm{sp}}:= \max_{\|\bx\|_2=\|\by\|_2=\|\bz\|_2=1} |\cM' \cdot_1 \bx \cdot_2 \by \cdot_3 \bz|
\]
where the vectors $\bx,\by,\bz$ are of compatible dimension. Equivalently, the spectral norm of $\cM'$ is the largest singular value of $\cM'$ which is in turn given by the Frobenius norm of a best rank $1$ approximation of $\cM'$. See \cite{FL18,FMPS13,FO14,QHX21} for more information about best rank $1$ approximations of tensors, tensor singular values, and the spectral norm of tensors.

\begin{theorem}
\label{theorem:TensMDBound}
Let $\cT$ and $\cW$ be $R \times R \times 2$ tensors with entries in $\K$. Assume that $\cT$ has $\K$-rank $R$ with CPD $\cpd$ where the columns of $\bC$ are normalized to have unit Euclidean norm. Let $\{\scrL_r\}_{r=1}^R$ be the spectrum of $\cT$. If 
\[
\spnorm{\cT-\cW} < \frac{\sigma_{\min} (\bA) \sigma_{\min} (\bB) \min_{i \neq j} \chi (\scrL_i, \scrL_j)}{2},
\]
then $\cW$ is a  simple  tensor of $\K$-rank $R$ and satisfies
\beq
\label{eq:MatchingDistanceBound}
\md[\cT,\cW]  \leq \frac{\spnorm{\cT-\cW}}{\sigma_{\min} (\bA) \sigma_{\min} (\bB)}.
\eeq
\end{theorem}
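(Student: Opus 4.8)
Throughout, write $\cE:=\cW-\cT$ and $\rho:=\spnorm{\cE}/(\sigma_{\min}(\bA)\sigma_{\min}(\bB))$, so that the hypothesis reads exactly $\rho<\frac12\min_{i\neq j}\chi(\scrL_i,\scrL_j)$. This already forces $\sigma_{\min}(\bA),\sigma_{\min}(\bB)>0$ and the $\scrL_r$ pairwise distinct; hence $\bA,\bB\in\K^{R\times R}$ are invertible, $\bT_\ell=\bA D_\ell(\bC)\bB^{\mathrm T}$ for $\ell=1,2$, and $p_\cT=\det(\bA)\det(\bB)\prod_r\langle\bC(:,r),\bgam\rangle$ splits into $R$ distinct linear forms, i.e.\ $\cT$ is simple. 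The plan is the classical one for upgrading a one-sided spectral-variation estimate to a two-sided matching-distance estimate: prove a localization lemma, then run a continuation argument from $\cT$ to $\cW$.

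\emph{Localization.} First I would show that for \emph{any} $\cV\in\K^{R\times R\times 2}$ and any unit $\mu\in\K^2$ with $\det(\mu_1\cV(:,:,1)+\mu_2\cV(:,:,2))=0$, the JGE value of $\cV$ attached to $\mu$ lies within chordal distance $\spnorm{\cV-\cT}/(\sigma_{\min}(\bA)\sigma_{\min}(\bB))$ of some $\scrL_r$. Pick a unit kernel vector $\bz$, write $\cV(:,:,\ell)=\bT_\ell+\bE_\ell'$ with $\bE_\ell':=(\cV-\cT)(:,:,\ell)$, and rearrange the kernel relation to $\bA\,\mathrm{diag}_r(\mu_1 C_{1r}+\mu_2 C_{2r})\,\bB^{\mathrm T}\bz=-(\mu_1\bE_1'+\mu_2\bE_2')\bz$. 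Multiplying by $\bA^{-1}$, using $\|\mu_1\bE_1'+\mu_2\bE_2'\|_2\le\spnorm{\cV-\cT}$ for unit $\mu$ (immediate from the definition of the tensor spectral norm) and $\|\bB^{\mathrm T}\bz\|_2\ge\sigma_{\min}(\bB)$, a weighted-average argument over the diagonal entries yields an index $r_0$ with $|\mu_1 C_{1r_0}+\mu_2 C_{2r_0}|\le\spnorm{\cV-\cT}/(\sigma_{\min}(\bA)\sigma_{\min}(\bB))$. Since the columns of $\bC$ are unit vectors and the JGE value of $\cV$ attached to $\mu$ is the image of $\spann(\mu)$ under the $90^\circ$ rotation of $\K^2$ — a chordal isometry over both $\R$ and $\C$ — a short computation identifies $|\mu_1 C_{1r_0}+\mu_2 C_{2r_0}|$ with the chordal distance between that JGE value and $\scrL_{r_0}$, proving the claim.

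\emph{Continuation.} Next I would introduce the homotopy $\cT_t:=(1-t)\cT+t\cW$, $t\in[0,1]$, so $\spnorm{\cT_t-\cT}=t\,\spnorm{\cE}\le\spnorm{\cE}$ and, by the lemma, every JGE value of $\cT_t$ lies in $\bigcup_{r=1}^R\overline B(\scrL_r,\rho)$; since $\rho<\frac12\min_{i\neq j}\chi(\scrL_i,\scrL_j)$, these closed chordal balls are pairwise disjoint. First this forces $p_{\cT_t}\not\equiv 0$ for all $t$: otherwise every line $\spann(\mu)$ would be a JGE value of $\cT_t$, contradicting that $\bigcup_r\overline B(\scrL_r,\rho)$ is a proper subset of $\mathbb P^1(\K)$ (as $\rho<1$). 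Hence each $p_{\cT_t}$ is a nonzero binary form of degree $R$ whose $R$ roots in $\mathbb P^1(\overline\K)$ move continuously with $t$ and always lie in $\bigsqcup_r\overline B(\scrL_r,\rho)$. At $t=0$ each ball contains exactly one root, of multiplicity one (namely $\scrL_r$, since $\cT$ is simple); as roots cannot cross the root-free annuli separating the disjoint balls, the count in each ball is constant, so $p_\cW=p_{\cT_1}$ has exactly one simple root in each $\overline B(\scrL_r,\rho)$ and no others. When $\K=\R$ each such ball is stable under complex conjugation, so its single root is real and $p_\cW$ splits over $\R$. Thus $\cW$ is slice mix invertible with $R$ distinct JGE values of algebraic multiplicity one, i.e.\ $\cW$ is simple; and by the Weierstrass form of a slice mix invertible pencil with simple spectrum (see e.g.\ \cite{GV96}) — equivalently, by part~\ref{it:RankIFFBasis} of Theorem~\ref{theorem:rankvsmult} once one notes that its $R$ distinct JGE values supply a joint generalized eigenbasis — $\cW$ has $\K$-rank $R$.

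\emph{Matching distance.} Finally, reindexing the spectrum of $\cW$ so that $\scrL_r'$ is the unique JGE value of $\cW$ in $\overline B(\scrL_r,\rho)$ gives a bijection realizing $\max_r\chi(\scrL_r,\scrL_r')\le\rho$, so $\md[\cT,\cW]\le\rho=\spnorm{\cT-\cW}/(\sigma_{\min}(\bA)\sigma_{\min}(\bB))$, which is \eqref{eq:MatchingDistanceBound}. The residual estimate in the localization step is routine; I expect the continuation step to demand the most care — keeping the entire homotopy inside the regular-pencil regime ($p_{\cT_t}\not\equiv0$) and making the continuity-of-roots and constant-count arguments precise on $\mathbb P^1$ rather than on $\C$, so that along a real homotopy real eigenvalues neither escape to infinity nor merge into complex conjugate pairs. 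A minor additional nuisance is verifying the elementary identities linking the bilinear pairing $\mu_1 C_{1r}+\mu_2 C_{2r}$, the $90^\circ$ rotation of $\K^2$, and the chordal metric over $\C$ and not merely over $\R$.
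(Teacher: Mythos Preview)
Your approach is essentially the paper's: a spectral-variation localization bound, the linear homotopy $\cT_t=(1-t)\cT+t\cW$, regularity along the path, and a continuity-plus-counting argument inside disjoint chordal balls to upgrade to matching distance. Your inline localization is the $K=2$ case of the paper's Theorem~\ref{theorem:TensBauerFike}; your regularity argument (if $p_{\cT_t}\equiv0$ then every line would be a root, but the balls cannot cover $\mathbb P^1$) is a pleasant shortcut relative to the explicit invertible-slice construction in Proposition~\ref{proposition:RegularityBound}. One small imprecision: ``as $\rho<1$'' is not by itself why the balls fail to cover; what you actually use is that the closed balls are pairwise disjoint (since $\rho<\tfrac12\min_{i\ne j}\chi(\scrL_i,\scrL_j)$) and $\mathbb P^1(\K)$ is connected.

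The one genuine gap is in the $\K=\R$ step. Your localization is stated for $\mu\in\K^2$, but the continuation then asserts that all roots of $p_{\cT_t}$ in $\mathbb P^1(\C)$ lie in $\bigsqcup_r\overline B(\scrL_r,\rho)$. Extending the localization to complex $\mu$ with real slices $\bE_\ell'$ would require $\|\mu_1\bE_1'+\mu_2\bE_2'\|_2\le\spnorm{\cE}$ with the \emph{real} tensor spectral norm on the right, and this can fail: the complex spectral norm of a real $R\times R\times2$ tensor can strictly exceed the real one. Your conjugation-stability step therefore presupposes something you have not proved (one \emph{complex} root per ball). The paper's strategy for $\K=\R$ reverses the logic: argue first that the roots stay real throughout the homotopy, since a complex-conjugate pair can emerge only after two real roots coalesce at some $\alpha_*$, and that coalescence is a repeated \emph{real} JGE value which the real localization pins to a single ball, contradicting the one-root-per-ball count already valid on $[0,\alpha_*)$ and hence at $\alpha_*$ by passing to the limit. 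With realness in hand, the real localization and counting finish the job; your conjugation-stability observation then comes for free.
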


Since the Frobenius norm is an upper bound for the spectral norm of a tensor, Theorem \ref{theorem:TensMDBound} illustrates that if the pencil $(\bS_{2i-1},\bS_{2i})\cong \cS_i$ appearing in Theorem \ref{theorem:MultiplePencilBound} has CPD $\cS_i= [\![\bA_i,\bB_i,\bC_i]\!]$ where the columns of $\bC_i$ have unit Euclidean norm, then one may take
\[
\epsilon_i = \frac{\sigma_{\min} (\bA_i ) \sigma_{\min} (\bB_i ) \min_{k \neq j} \chi (\bC_i(:,k), \bC_i(:,j))}{2}.
\]
Here $\bC_i(:,k)$ is the $k$th column of $\bC_i$. Note that one can use a generalized eigenvalue decomposition to determine if an $R \times R \times 2$ tensor is slice mix invertible and has rank $R$ and, if so, compute a CPD of the tensor. This approach is often called Jennrich's algorithm \cite{Harsh70}.

In Theorem \ref{theorem:TensBauerFike} we give a perturbation theoretic bound for the \textit{joint} generalized eigenvalues of tensors $\cT$ and $\cW$ of size $R \times R \times K$. However, the measurement of variation between the joint generalized spectra of $\cT$ and $\cW$ used in Theorem \ref{theorem:TensBauerFike} is notably weaker than the matching distance which is used in Theorem \ref{theorem:TensMDBound}.

\begin{remark}
In the language of the classical generalized eigenvalue problem and matrix pencils, Theorem \ref{theorem:TensMDBound} gives a bound which guarantees that the matrix pencil $(\bW_1,\bW_2)$ is diagonalizable over $\K$. 
\end{remark}

\section{Basic results for the joint generalized eigenvalue problem}
\label{sec:JGEbasics}

We now develop several basic results for joint generalized eigenvalues. These results help reformulate CPD in terms of the joint generalized eigenvalue problem. This reformulation allows us to apply our upcoming perturbation theoretic results to the CPD setting.

\subsection{A simplifying assumption}

As a simplifying assumption throughout the section we restrict to the case where the tensor $\cT$ of interest has size $R \times R \times K$ for some $K \leq R$. In addition, we typically assume that $\cT$ is slice mix invertible. The assumption that $\cT$ is slice mix invertible is closely related to the tensor having multilinear rank $(R,R,K)$ and rank $R$, see Lemma \ref{lemma:invertslice}. 

As described in Section \ref{sec:Compress}, orthogonal compressions preserve geometry, so the assumption that $\cT$ has multilinear rank $R \times R \times K$ is not restrictive. The assumption that $\cT$ has a linear combination of frontal slices which is invertible holds for tensors with generic entries. 

We remark that, without assuming $\cT \in \K^{R \times R \times K}$ has rank $R$, the assumption that $\cT$ has multilinear rank $(R,R,K)$ is not sufficient for $\cT$ to be slice mix invertible. In fact, it is possible for a tensor to have multilinear rank $(R,R,K)$ and border $\K$-rank $R$ and not be slice mix invertible. This is illustrated by the following example.

\begin{exa}
\label{exa:pequiv0}
Let $\cT \in \R^{3 \times 3 \times 3}$ be the tensor with frontal slices
\[
\bT_1= \begin{pmatrix}
1 & 0 & 0 \\
0 & 0 & 0 \\
0 & 0 & 1
\end{pmatrix}
\quad
\bT_2= \begin{pmatrix}
0 & 2 & 0 \\
0 & 0 & -2 \\
0 & 0 & 0
\end{pmatrix}
\quad
\bT_3= \begin{pmatrix}
0 & 0 & -1 \\
0 & 0 & 0 \\
0 & 0 & 0
\end{pmatrix}.
\]
Then $\cT$ has multilinear rank $(3,3,3)$ and border $\R$-rank $3$, but $\cT$ is not slice mix invertible. That is, the span of the frontal slices of $\cT$ does not contain an invertible matrix. To see that $\cT$ has border $\R$-rank $3$, for each $n$ define the factor matrices
\[
\bA_n = \begin{pmatrix}
1 & 1 & 1 \\ 0 & \frac{1}{n} & \frac{1}{n^4} \\ 
0 & 0 & \frac{1}{n^2}
\end{pmatrix}
\quad
\bB_n = \begin{pmatrix}
1 & 0 & 0 \\ -\frac{1}{n^2} & n & 0 \\ -n^2 & -n^2 & n^2
\end{pmatrix}
\quad
\bC_n = \begin{pmatrix}
1 & \frac{1}{n^3} & 1 \\ \frac{4}{n} & \frac{2}{n} & \frac{6}{n} \\ 
\frac{1}{n^2} & 0 & 0
\end{pmatrix}
\]
and let $\cT^n$ be the $\K$-rank $3$ tensor $\cT^n = [\![\bA_n, \bB_n, \bC_n]\!]$. Then $\lim \cT^n = \cT$, hence the border $\K$--rank of $\cT$ is at most $3$. That $\cT$ has border $\K$--rank $3$ then follows from the fact that $\cT$ has multilinear rank $(3,3,3)$  together with equation \eqref{eq:BRankVsMLRank}. As a consequence of the forthcoming Lemma \ref{lemma:invertslice}, $\cT$ has $\K$-rank strictly greater than $3$.  
\end{exa}

\begin{lemma}
\label{lemma:invertslice}
Let $\cT \in \K^{R \times R \times K}$ be a tensor with $\K$-rank $R$ and full multilinear rank, and let $\cT=\cpd$ be a CPD of $\cT$. Then $\cT$ is slice mix invertible and the factor matrices $\bA,\bB \in \K^{R \times R}$ are invertible.
\end{lemma}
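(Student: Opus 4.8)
The plan is to deduce invertibility of $\bA$ and $\bB$ from the full multilinear rank hypothesis, and then to exhibit an explicit invertible linear combination of the frontal slices.

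First I would write the CPD as $\cT = \sum_{r=1}^R \ba_r \otimes \bb_r \otimes \bc_r$ and observe that every mode-$1$ fiber of $\cT$ satisfies
\[
\cT(:,k,\ell) = \sum_{r=1}^R \bB(k,r)\,\bC(\ell,r)\,\ba_r \in \col(\bA).
\]
Hence the span of the mode-$1$ fibers of $\cT$ is contained in $\col(\bA)$, so $R = R_1(\cT) \le \rank(\bA) \le R$, which forces $\bA \in \K^{R\times R}$ to be invertible. The identical argument applied to mode-$2$ fibers (which lie in $\col(\bB)$) together with $R_2(\cT) = R$ shows that $\bB$ is invertible.

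Next, using $\bT_k = \bA\, D_k(\bC)\, \bB^{\mathrm T}$, for any $\bgam = (\gamma_1,\dots,\gamma_K) \in \K^K$ one has
\[
\sum_{k=1}^K \gamma_k \bT_k = \bA\Bigl(\sum_{k=1}^K \gamma_k D_k(\bC)\Bigr)\bB^{\mathrm T},
\]
and $\sum_k \gamma_k D_k(\bC)$ is the diagonal matrix whose $r$th diagonal entry is $\sum_{k=1}^K \gamma_k \bC(k,r)$. Since $\bA$ and $\bB$ are invertible, $\sum_k \gamma_k \bT_k$ is invertible as soon as this diagonal matrix is, i.e.\ as soon as $\sum_{k=1}^K \gamma_k \bC(k,r) \ne 0$ for every $r = 1,\dots,R$. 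To finish I would note that each column $\bc_r$ of $\bC$ is nonzero (it is a factor of a rank-one summand of a minimal decomposition, so its vanishing would contradict minimality of $R$), whence for each $r$ the set $H_r := \{\bgam \in \K^K : \sum_{k} \gamma_k \bC(k,r) = 0\}$ is a proper linear subspace of $\K^K$. Because $\K$ is infinite, the finite union $\bigcup_{r=1}^R H_r$ does not cover $\K^K$; choosing any $\bgam \notin \bigcup_r H_r$ makes $\sum_k \gamma_k \bT_k$ invertible, so $\cT$ is slice mix invertible.

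This argument is entirely elementary, so there is no genuine obstacle. The only points requiring a little care are the observation that all factor vectors of a rank-$R$ CPD are nonzero --- which is exactly where minimality of the decomposition enters --- and the standard fact that a vector space over an infinite field is not a finite union of proper subspaces.
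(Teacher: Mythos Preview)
Your proof is correct and follows essentially the same approach as the paper: both deduce invertibility of $\bA$ and $\bB$ from the full multilinear rank via the mode-$1$ and mode-$2$ fibers (the paper phrases this using the Khatri--Rao unfolding $\bA(\bC\odot\bB)^{\mathrm T}$, but the content is identical), and both obtain slice mix invertibility by noting that each column $\bc_r$ is nonzero and then avoiding the finite union of the resulting hyperplanes (the paper uses Zariski-closed/measure-zero language where you invoke the infinite-field fact directly).
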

\begin{proof}
Assuming that $\cT$ has multilinear rank $(R,R,K)$ implies that each mode-$i$ unfolding of $\cT$ has full row rank. By choosing an appropriate ordering of columns, the mode-$1$ unfolding of $\cT$ is given by
\[
\bA (\bC \odot \bB)^\mathrm{T} \in \K^{R \times RK},
\]
where $\odot$ denotes the Khatri-Rao (column-wise Kronecker) product, e.g. see \cite{Cetal15}. $\cT$ having multilinear rank $(R,R,K)$ implies that the mode-$1$ unfolding has rank $R$ from which it follows that $\bA \in \K^{R \times R}$ has rank $R$ and is invertible. A similar argument shows that $\bB$ is invertible. 

We now show that $\cT$ is slice mix invertible. Using $\bT_i = \bA D_i (\bC) \bB^\mathrm{T}$ where $D_i (\bC)$ is the diagonal matrix whose entries are given by the $i$th row of $\bC$, to show that $\cT$ has a linear combination of frontal slices which is invertible, it is sufficient to show that there is a linear combination of rows of $\bC$ which has no entry equal to zero. To this end, note that since $\cT$ has $\K$-rank $R$, the factor matrix $\bC$ cannot have a column with all entries equal to zero. Therefore, for each $r=1,\dots,R$, the set of $\alpha \in \K^K$ such that
\[
\sum_{k=1}^K  \alpha_k \bC(r,k)=0
\]
is a nontrivial Zariski closed subset of $\K^K$. It follows that for $\alpha \in \K^K$ outside of a set of measure zero, one has
\[
\sum_{k=1}^K  \alpha_k \bC(r,k) \neq 0 \mathrm{\ for \ all \ } r=1,\dots,R
\]
from which the result follows.
\end{proof}

\subsection{Distinct JGE values correspond to linearly independent JGE vectors}

A perhaps unsurprising yet important fact is that a set of joint generalized eigenvectors corresponding to a set of distinct joint generalized eigenvalues is linearly independent.

\begin{lemma}
\label{lemma:DistinctENsImpliesEigenbasis}
Let $\cT \in \K^{R \times R \times K}$ and assume $\cT$ is slice mix invertible. Let $\{(\scrL_j,\bx_j)\}_{j=1}^\ell$ be a collection of JGE pairs of $\cT$ such that the $\scrL_j$ are all distinct. Then $\{\bx_j\}_{j=1}^\ell$ is a linearly independent set. As a consequence, if $\cT$ is simple and each of its JGE values has geometric multiplicity at least one, then $\cT$ has a JGE basis.
\end{lemma}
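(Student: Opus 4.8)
The plan is to reduce the joint generalized eigenvalue equation \eqref{eq:JGEVdef} to an ordinary common‑eigenvector statement and then run a standard ``distinct eigenvalues'' argument. First I would exploit slice mix invertibility: choose $\bv \in \K^K$ with $\bM := \cT \cdot_3 \bv = \sum_{k=1}^K v_k \bT_k$ invertible. Fix a JGE pair $(\scrL_j,\bx_j)$ with $\scrL_j = \spann(\blam_j)$ and witness $\by_j \neq \b0$, so $\bT_k \bx_j = (\lambda_j)_k \by_j$ for all $k$. Summing these against $v_k$ gives $\bM \bx_j = \mu_j \by_j$ with $\mu_j := \sum_k v_k (\lambda_j)_k$. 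Since $\bM$ is invertible and $\bx_j \neq \b0$, we have $\bM \bx_j \neq \b0$, so $\mu_j \neq 0$; hence $\by_j = \mu_j^{-1}\bM\bx_j$, and substituting back gives $\bM^{-1}\bT_k \bx_j = (\omega_j)_k \bx_j$ for all $k$, where $\bomega_j := \mu_j^{-1}\blam_j$ spans $\scrL_j$ and satisfies $\sum_k v_k (\omega_j)_k = 1$. Thus each $\bx_j$ is a common eigenvector of $\bM^{-1}\bT_1,\dots,\bM^{-1}\bT_K$ with ``joint eigenvalue'' tuple $\bomega_j \in \K^K$, and distinctness of the lines $\scrL_j$ forces the tuples $\bomega_1,\dots,\bomega_\ell$ to be pairwise distinct vectors (if $\bomega_i=\bomega_j$ then $\scrL_i=\spann(\bomega_i)=\spann(\bomega_j)=\scrL_j$).

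Next I would prove the elementary fact that common eigenvectors of a tuple of operators with pairwise distinct joint eigenvalue tuples are linearly independent, by a minimal‑support argument. Suppose some nontrivial relation $\sum_{j \in S} a_j \bx_j = \b0$ holds with support $S$ of minimal cardinality; since the $\bx_j$ are nonzero we have $|S| \geq 2$. Pick $i,j \in S$ with $i \neq j$; since $\bomega_i \neq \bomega_j$ there is a coordinate $k$ with $(\omega_i)_k \neq (\omega_j)_k$. Applying $\bM^{-1}\bT_k$ to the relation and subtracting $(\omega_j)_k$ times the relation yields $\sum_{m \in S} a_m\big((\omega_m)_k - (\omega_j)_k\big)\bx_m = \b0$, a relation whose support omits $j$ but still contains $i$ (its $i$‑coefficient is $a_i((\omega_i)_k-(\omega_j)_k)\neq 0$), contradicting minimality. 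Hence $\{\bx_j\}_{j=1}^\ell$ is linearly independent. (Alternatively, pick $\bc \in \K^K$ making the scalars $\sum_k c_k(\omega_j)_k$ pairwise distinct — possible since $\K$ is infinite and the $\bomega_j$ are distinct — and apply the classical single‑matrix statement to $\bM^{-1}\big(\sum_k c_k \bT_k\big)$.)

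Finally, for the consequence: if $\cT$ is simple then $p_\cT$ factors into $R$ distinct linear forms, so $\cT$ has $R$ distinct JGE values $\scrL_1,\dots,\scrL_R$; assuming each has geometric multiplicity at least one, choose a JGE vector $\bx_r$ for $\scrL_r$. By the first part, $\{\bx_r\}_{r=1}^R$ is linearly independent in $\K^R$, hence a basis, so $\cT$ has a JGE basis.

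I expect the only mildly delicate point to be the bookkeeping around the scalars $\mu_j$ — verifying $\mu_j \neq 0$ so that passing to $\bM^{-1}\bT_k$ is legitimate, and confirming that distinctness of the \emph{lines} $\scrL_j$ genuinely forces distinctness of the chosen \emph{vector} representatives $\bomega_j$. Everything else is routine linear algebra; in particular the operators $\bM^{-1}\bT_k$ need not commute, but the minimal‑support argument never attempts to simultaneously triangularize them, so this causes no difficulty.
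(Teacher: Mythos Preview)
Your proof is correct and takes a genuinely different route from the paper's. The paper reduces to the $K=2$ case: it forms the matrix $\bL$ whose columns are the $\blam_j$, chooses $\bV\in\K^{2\times K}$ so that $\bV\bL$ retains Kruskal rank $2$, passes to the subpencil $\cS=\cT\cdot_3\bV$, and then invokes the classical fact (cited from Stewart--Sun) that generalized eigenvectors of a matrix pencil corresponding to distinct generalized eigenvalues are linearly independent. You instead invert a single invertible slice combination $\bM$ to turn every $\bx_j$ into a \emph{common} eigenvector of the tuple $(\bM^{-1}\bT_1,\dots,\bM^{-1}\bT_K)$, and then run a self-contained minimal-support argument directly on that tuple. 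Your approach is more elementary in that it requires no external citation and sidesteps the (easy but unproved in the paper) existence of a good $2\times K$ compression $\bV$; the paper's approach, on the other hand, keeps the argument tied to the classical generalized-eigenvalue literature that the rest of the article builds on. Your parenthetical alternative --- picking $\bc$ so that the scalars $\sum_k c_k(\omega_j)_k$ are pairwise distinct and reducing to a single ordinary matrix --- is essentially the paper's idea carried one step further (inverting $\bM$ first so the problem becomes ordinary rather than generalized). The bookkeeping you flagged around $\mu_j\neq 0$ and the passage from distinct lines to distinct normalized representatives is handled correctly.
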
 
\begin{proof}
 We treat the problem by reducing to the $K=2$ setting. To this end, for $j=1,\dots,\ell$ let $\blam_j \in \K^K$ be a vector such that $\spann(\blam_j)=\scrL_j$, and let $\bL \in \K^{K \times \ell}$ be the matrix whose $j$th column is equal to $\blam_j$. Since the $\scrL_j$ are distinct, the matrix $\bL$ has Kruskal rank at least $2$. That is, no pair of columns of $\bL$ is linearly dependent. It follows that there exists a matrix $\bV \in \K^{2 \times K}$ such that $\bV \bL$ has Kruskal rank $2$. 

Set $\cS = \cT  \cdot_3 \bV \in \K^{R \times R \times 2}$ and let $\bS_1$ and $\bS_2$ be the frontal slices of $\cS$. It is straightforward to show that each $\bx_j$ is a generalized eigenvector of the matrix pencil $(\bS_1, \bS_2)$ which corresponds to the generalized eigenvalue $\spann(\bV \blam_j)$. Furthermore, since the matrix $\bV \bL$ has Kruskal rank $2$, the generalized eigenvalues $\{\spann(\bV \blam_j)\}_{j=1}^\ell$ are all distinct. As a consequence of \cite[Chapter VI, Theorem 1.11]{SS90}, distinct generalized eigenvalues correspond to linearly independent eigenvectors, so it follows that $\{\bx_j\}_{j=1}^\ell$ is a linearly independent set.
\end{proof}

It is possible for a joint generalized eigenvalue of a slice mix invertible tensor to have nonzero algebraic multiplicity while having geometric multiplicity equal to zero, hence, the assumption that the joint generalized eigenvalues of $\cT$ have nonzero geometric multiplicity in the second part of of Lemma \ref{lemma:DistinctENsImpliesEigenbasis} is necessary.

\begin{exa}
\label{exa:geomultzero}
Let $\cT \in \R^{3 \times 3 \times 3}$ be the tensor with frontal slices
\[
\bT_1= \begin{pmatrix}
1 & 0 & 0 \\
1 & 1 & 0 \\
3 & 4 & 1
\end{pmatrix}
\quad
\bT_2= \begin{pmatrix}
1 & 0 & 0 \\
1 & 2 & 0 \\
4 & 5 & -1
\end{pmatrix}
\quad
\bT_3= \begin{pmatrix}
1 & 0 & 0 \\
5 & 3 & 0 \\
7 & 8 & 5
\end{pmatrix}.
\]
Then 
\[
p_\cT(\gamma)=\det \left(\sum_{k=1}^3 \gamma_k \bT_k \right)= (\gamma_1 + \gamma_2 + \gamma_3) (\gamma_1 + 2 \gamma_2 + 3 \gamma_3) (\gamma_1 - \gamma_2 + 5 \gamma_3)
\]
hence $\cT$ is slice mix invertible and has three distinct JGE values counting algebraic multiplicity, namely $\spann ((1,1,1))$, $\spann ((1,2,3)),$ and $\spann ((1,-1,5))$. However, the JGE values $\spann ((1,1,1))$ and $\spann ((1,2,3))$ do not have a corresponding JGE vector.
\end{exa}

In Example \ref{exa:geomultzero}, one could reasonably consider left joint generalized eigenvalues of $\cT$ in which case the JGE value $\spann ((1,1,1))$ in Example \ref{exa:geomultzero} corresponds to the left JGE vector $(1,0,0) \in \R^3$. However, the JGE values $\spann ((1,-1,5))$ and $\spann ((1,2,3))$ do not have left JGE vectors. The JGE value $\spann ((1,2,3))$ is of particular note since it neither has left nor right JGE vectors. 

Assuming that a tensor $\cT$ is slice mix invertible, it is not difficult to show that $\cT$ has a basis of right JGE vectors if and only if it has a basis of left JGE vectors. However, this fact and left JGE vectors in general have little impact on our results, so we do not discuss them further. 

\subsection{Algebraic and geometric multiplicities}

We now more carefully examine algebraic and geometric multiplicities for joint generalized eigenvalues. In particular we show that algebraic multiplicity is always greater than or equal to geometric multiplicity of a joint generalized eigenvalue. As an immediate consequence, if the tensor $\cT \in \K^{R \times R \times K}$ has a JGE pair $(\scrL,\bx)$, then $\am(\scrL) \geq 1$. 

Before stating our result we recall that if $\cT$ is not slice mix invertible, then $p_{\cT}$ is identically zero in which case $\langle \blam,\bgam \rangle^m$ divides $p_{\cT} (\bgam)$ for all $\blam \in \K^K$ and all $m \in \mathbb{N}$. In this case one can say that $\spann(\blam)$ has infinite algebraic multiplicity as a JGE value of $\cT$ for all $\blam \in \K^K$. For this reason, in the upcoming proposition it is sufficient to restrict to the case where $\cT$ is slice mix invertible.

\begin{proposition}
\label{proposition:alggreatergeo}
Let $\cT \in \K^{R \times R \times K}$ and assume $\cT$ is slice mix invertible. Then for any nonzero $\blam \in \K^K$ one has $\am(\spann(\blam)) \geq \gm(\spann(\blam))$. In particular, if $(\spann(\blam),\bx)$ is a JGE pair of $\cT$, then there is an integer $m>0$ such that the characteristic polynomial of $\cT$ factors as
\beq
\label{eq:gmamFactor}
p_\cT (\bgam) = \langle \blam, \overline{\bgam} \rangle^m g(\bgam)
\eeq
where $g$ is a nonzero polynomial in the indeterminate $\bgam$.

Furthermore, if $\cT$ has  border $\K$-rank $R$, then $p_\cT(\bgam)$ factors into a product of $R$ linear terms. That is, one has
\[
p_\cT (\bgam) = \Pi_{r=1}^R \langle \blam_r, \overline{\bgam} \rangle
\]
hence $\cT$ has $R$ JGE values counting algebraic multiplicity.
\end{proposition}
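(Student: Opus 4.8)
The plan is to prove the three assertions of Proposition~\ref{proposition:alggreatergeo} in order, reducing everything to the classical generalized eigenvalue ($K=2$) theory via a judicious choice of $\bV \in \K^{2 \times K}$, exactly as in the proof of Lemma~\ref{lemma:DistinctENsImpliesEigenbasis}. For the inequality $\am(\spann(\blam)) \geq \gm(\spann(\blam))$, suppose $\gm(\spann(\blam)) = d \geq 1$ and pick linearly independent JGE vectors $\bx_1, \dots, \bx_d$ all corresponding to $\scrL = \spann(\blam)$. First I would choose $\bV \in \K^{2 \times K}$ with orthonormal rows so that $\bV\blam \neq \b0$ and the resulting subpencil $(\bS_1, \bS_2) = \cS$, with $\cS = \cT \cdot_3 \bV$, is still slice mix invertible (this is possible since the set of $\bV$ for which $p_\cS \equiv 0$ or $\bV\blam = \b0$ is a proper Zariski-closed set, using that $\cT$ is slice mix invertible and $\blam \neq \b0$). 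Then each $\bx_i$ is a generalized eigenvector of $(\bS_1,\bS_2)$ with generalized eigenvalue $\spann(\bV\blam)$, so the geometric multiplicity of $\spann(\bV\blam)$ for the pencil $(\bS_1,\bS_2)$ is at least $d$. By the classical theory of generalized eigenvalues (e.g.\ \cite[Chapter VI]{SS90}), the algebraic multiplicity of $\spann(\bV\blam)$ in $p_{\cS}(\gamma_1,\gamma_2) = \det(\gamma_1 \bS_1 + \gamma_2 \bS_2)$ is at least $d$. But $p_{\cS}(\gamma_1, \gamma_2) = p_{\cT}(\bgam)$ after the substitution $\bgam \mapsto \bgam$ corresponding to $\bV$ (more precisely, $p_{\cS}$ is obtained from $p_\cT$ by substituting $\bgam = \bV^{\mathrm T}(\gamma_1,\gamma_2)^{\mathrm T}$, possibly up to composing with a further linear change making $\bV\blam$ one of the coordinate directions), and this substitution carries the factor $\langle\blam,\bbgam\rangle$ of $p_\cT$ to (a scalar multiple of) the factor cutting out $\spann(\bV\blam)$ in $p_{\cS}$. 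Hence $\langle \blam,\bbgam\rangle^d$ divides $p_\cT$, giving $\am(\scrL) \geq d = \gm(\scrL)$.

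The displayed factorization \eqref{eq:gmamFactor} is then immediate: if $(\scrL,\bx)$ is a JGE pair then $\gm(\scrL) \geq 1$, so by the inequality just proved $\am(\scrL) = m \geq 1$, meaning $\langle\blam,\bbgam\rangle^m$ divides $p_\cT$ and we may write $p_\cT(\bgam) = \langle\blam,\bbgam\rangle^m g(\bgam)$ with $g \neq 0$ (the remaining quotient is a nonzero polynomial precisely because $m$ was chosen maximal and because $p_\cT \not\equiv 0$ by slice mix invertibility).

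For the last assertion, I would argue that when $\cT$ has border $\K$-rank $R$, the characteristic polynomial $p_\cT(\bgam) = \det(\sum_k \gamma_k \bT_k)$ splits into $R$ linear factors over $\K$. Take a sequence $\cT^{(n)} \to \cT$ of $\K$-rank $R$ tensors with CPDs $\cT^{(n)} = [\![\bA_n,\bB_n,\bC_n]\!]$. For $\K = \C$ this splitting is automatic (every degree-$R$ homogeneous polynomial in $K$ variables need not split, but here we must use more structure). The cleaner route: by Lemma~\ref{lemma:invertslice} applied to each $\cT^{(n)}$ — after noting $\cT^{(n)}$ can be taken with full multilinear rank for $n$ large, since $\cT$ has multilinear rank $(R,R,K)$ by the ambient hypothesis and multilinear rank is lower semicontinuous — we get $\bA_n, \bB_n$ invertible and $\bT^{(n)}_k = \bA_n D_k(\bC_n) \bB_n^{\mathrm T}$, whence $p_{\cT^{(n)}}(\bgam) = \det(\bA_n)\det(\bB_n) \prod_{r=1}^R \langle \bC_n(r,:)\,\text{-th column data}\rangle = \det(\bA_n\bB_n^{\mathrm T}) \prod_{r=1}^R \langle \bc_{n,r}, \bbgam\rangle$ where $\bc_{n,r}$ is the $r$-th column of $\bC_n$ (up to normalization). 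So $p_{\cT^{(n)}}$ is always a product of $R$ linear forms. Normalizing the linear forms to lie on the unit sphere and absorbing scalars, we get $p_{\cT^{(n)}}(\bgam) = c_n \prod_{r=1}^R \langle \blam_{n,r}, \bbgam\rangle$ with $\|\blam_{n,r}\| = 1$. Since $p_{\cT^{(n)}} \to p_{\cT}$ coefficientwise (determinant is continuous) and $p_\cT \not\equiv 0$, the leading coefficients $c_n$ stay bounded away from $0$ and $\infty$; passing to a subsequence, $\blam_{n,r} \to \blam_r$ and $c_n \to c \neq 0$, so $p_\cT(\bgam) = c\prod_{r=1}^R \langle\blam_r,\bbgam\rangle$, i.e.\ $p_\cT$ is a product of $R$ linear terms, and $\cT$ has $R$ JGE values counting algebraic multiplicity.

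The main obstacle I anticipate is the bookkeeping in the reduction step for the first assertion: making precise that substituting the two-variable pencil's parameters into $p_\cT$ recovers $p_{\cS}$ and that this substitution sends the linear factor $\langle\blam,\bbgam\rangle$ to the correct linear factor of $p_{\cS}$ without losing multiplicity — one must check that $\bV$ can be chosen so that no \emph{other} linear factor of $p_\cT$ also maps to the $\spann(\bV\blam)$ factor of $p_{\cS}$ (otherwise the multiplicity could spuriously inflate, which would still give the inequality we want, but one wants the identification to be clean) and, conversely, that the $\bx_i$ genuinely remain linearly independent generalized eigenvectors of the pencil. A secondary subtlety is ensuring, in the last part, that the convergence $\cT^{(n)} \to \cT$ can be taken with $\cT^{(n)}$ of full multilinear rank so that Lemma~\ref{lemma:invertslice} applies; this uses lower semicontinuity of multilinear rank together with the standing hypothesis that $\cT$ itself has multilinear rank $(R,R,K)$.
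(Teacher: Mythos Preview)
Your argument for the border-rank-$R$ part (the last assertion) is essentially the paper's: factor $p_{\cT^{(n)}}$ via the CPD, normalize the linear forms, extract convergent subsequences, and control the scalar. One small correction: the proposition does not assume $\cT$ has full multilinear rank, so your appeal to ``the ambient hypothesis'' is misplaced. What you actually need is that the $\cT^{(n)}$ are slice mix invertible for large $n$ (an open condition, inherited from $\cT$), which already forces $R_1(\cT^{(n)})=R_2(\cT^{(n)})=R$ and hence $\bA_n,\bB_n$ invertible.

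The first assertion, however, has a genuine gap. Your reduction to $K=2$ establishes that $\langle\bV\blam,\,\cdot\,\rangle^d$ divides $p_{\cS}(\eta_1,\eta_2)=p_{\cT}(\bV^{\mathrm T}\boldsymbol\eta)$. From this you want to conclude that $\langle\blam,\bbgam\rangle^d$ divides $p_{\cT}$, but the implication runs the wrong way. Restricting a homogeneous polynomial to a $2$-plane can \emph{create} linear factors that were not present before (e.g.\ $p_\cT=\gamma_1\gamma_2-\gamma_3^2$ is irreducible, yet its restriction to $\gamma_3=0$ is $\eta_1\eta_2$). More to the point, even when $p_\cT$ does split into linear forms, passing to $p_\cS$ can only \emph{increase} the multiplicity of any given linear factor (distinct factors of $p_\cT$ may collapse to the same factor of $p_\cS$), so the chain you obtain is
\[
\gm_\cT(\scrL)\le \gm_\cS(\bV\scrL)\le \am_\cS(\bV\scrL)\ge \am_\cT(\scrL),
\]
and the last inequality points the wrong direction. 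Your ``obstacles'' paragraph touches exactly this issue but misreads it as harmless inflation; in fact it is precisely what blocks the deduction.

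The paper avoids this entirely by a direct argument that does not leave $\K^{R\times R\times K}$: using the $m$ JGE vectors $\bx_1,\dots,\bx_m$ and the corresponding vectors $\by_1,\dots,\by_m$, one builds invertible matrices $\bX$ and $\bQ$ so that every frontal slice is simultaneously block upper triangularized,
\[
\bQ\,\bT_k\,\bX=\begin{pmatrix}\lambda_k\,\bI_m & *\\ 0 & *\end{pmatrix}\qquad(k=1,\dots,K).
\]
Taking the determinant of $\sum_k\gamma_k\bQ\bT_k\bX$ then yields $p_\cT(\bgam)=\langle\blam,\bbgam\rangle^m\,g(\bgam)$ immediately. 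This is the key idea your proposal is missing; once you have it, the factorization \eqref{eq:gmamFactor} and the inequality $\am\ge\gm$ follow in one line.
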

\begin{proof}

Suppose $\gm( \blam) = m$ and let $\bx_1, \dots, \bx_m$ be the corresponding JGE vectors. For $i=1, \dots, m$ let $\by_i$ be the nonzero vector corresponding to the eigenpair $(\spann(\blam),\bx_i)$, as in equation \eqref{eq:JGEVdef}. Note that the assumption that $\cT$ is slice mix invertible guarantees that $\{\by_1, \dots, \by_m\}$ is a linearly independent set. To show that $p_{\cT}$ factors as desired, we will use the $\bx_i$ and $\by_i$ to construct matrices $\bX$ and $\bQ$ which simultaneously block upper triangularize the frontal slices $\bT_k$ with $\lambda_k \bI_m$ on the first diagonal block.

To simultaneously block upper diagonalize the $\bT_k$, let $\bX_1 \in \R^{R \times m}$ be a matrix whose $i$th column is $\bx_i$, and let $\bX_2 \in \R^{R \times (R-m)}$ be any matrix such that the block matrix $\bX= (\bX_1 \ \bX_2)$ is invertible. Similarly let $\bY_1  \in \R^{R \times m}$ be the matrix whose $i$th column is $\by_i$, and let $\bQ_2 \in \R^{(R-m) \times R}$ be a matrix whose rows form a basis for the orthogonal complement of the range of $\bY_1$. Set 
\[
\bQ =\begin{pmatrix}
\bY_1^\dagger  \\
\bQ_2
\end{pmatrix} .
\]

With this set up, for each $k=1, \dots, K$ one has
\[
\bQ \bT_k \bX = \bQ (\lam_k \bY_1 \ \bT_k \bX_2) = \begin{pmatrix}
\lam_k  \bY_1^\dagger \bY_1 & \bY_1^\dagger \bT_k \bX_2 \\
\lambda_k \bQ_2 \bY_1 &  \bQ_2 \bT_k \bX_2 
\end{pmatrix} = \begin{pmatrix}
\lam_k  \bI_m & \bY_1^\dagger \bT_k \bX_2 \\
\mathbf{0} & \bQ_2 \bT_k \bX_2 
\end{pmatrix} .
\]
The matrices $\bX$ and $\bQ$ are both invertible by construction so using 
\[
\det\left(\bQ\right) \det \left( \sum_{k=1}^K \gamma_k \bT_k \right) \det(\bX)  =\det \left( \sum_{k=1}^K \gamma_k \bQ \bT_k \bX \right) = 
\det \left( \sum_{k=1}^K \gamma_k \begin{pmatrix} 
\lam_k  \bI_m & \bY_1^\dagger \bT_k \bX_2 \\
\mathbf{0} & \bY_2 \bT_k \bX_2 
\end{pmatrix} \right)
\]
shows that $p_\cT (\bgam)$ factors as in equation \eqref{eq:gmamFactor}. The fact that the polynomial $g(\bgam)$ is nonzero is a consequence of the assumption that $\cT$ is slice mix invertible, hence $p_\cT$ is not identically zero.

To complete the proof, first assume that $\cT$ has $\K$-rank $R$ and let $\cT=\cpd$ be the CPD of $\cT$. We have assumed $\cT$ is slice mix invertible, so Lemma \ref{lemma:invertslice} shows that $\bA$ and $\bB$ are invertible. Using $\bT_k = \bA D_k (\bC) \bB^\mathrm{T} $ shows that
\[
p_\cT (\bgam)= \det \left( \bA \sum_{k=1}^K \gamma_k D_k(\bC) \bB^\mathrm{T} \right) = \det (\bA)  \det \left(\sum_{k=1}^K \gamma_k D_k(\bC) \right) \det( \bB^\mathrm{T}).
\]
The final term in this equality easily factors as desired. 

 Now assume that $\cT$ has border $\K$-rank $R$. Then $\cT=\lim \cT^n$ for some sequence $\{\cT^n\}_{n=1}^\infty$ of $\K$-rank $R$ tensors. Since $\cT$ is slice mix invertible, we may assume that each $\cT^n$ is slice mix invertible. Thus, as we have shown above, for each $n$ we have 
\[
p_{\cT^n}  (\bgam) = \alpha_n \Pi_{r=1}^R \langle \blam_{r,n}, \overline{\bgam} \rangle.
\]
Here the constant $\alpha_n \in \K$ has been added so that we may scale to have $||\blam_{r,n}||_2=1$ for each $r=1,\dots, R$ and for all $n \in \bbN$. 

Since all the $\blam_{r,n}$ have norm $1$, by passing to a common subsequence if necessary, the sequence $\{\blam_{r,n}\}_{n=1}^\infty$ converges to some unit norm $\blam_r \in \K^K$ for each $r=1,\dots, R$. We next show that the constants $\alpha_n$ are bounded in norm. To this end note that there must exist some unit vector $\bgam_* \in \K^K$ such that
\[
 \Pi_{r=1}^R \langle  \blam_{r}, \overline{\bgam_*} \rangle \neq 0,
\]
as otherwise would imply that the union of the orthogonal complements of the $\blam_r$ is dense in $\K^K$. 

Since the $\cT^n$ converge to $\cT$ we must have $\lim_{n \to \infty} p_{\cT^n} (\gamma) = p_{\cT} (\gamma)$ from which it follows that 
\[
p_{\cT} (\bgam_*) = \lim_{n \to \infty} p_{\cT^n} (\bgam_*) = \lim_{n \to \infty} \alpha_n \Pi_{r=1}^R \langle  \blam_{r,n}, \overline{\bgam_*} \rangle.
\]
Using this together with 
\[
\lim_{n \to \infty} \Pi_{r=1}^R \langle \blam_{r,n}, \overline{\bgam_*} \rangle =  \Pi_{r=1}^R \langle \blam_{r}, \overline{\bgam_*} \rangle \neq 0
\]
shows that the sequence $\{\alpha_n\}_{n=1}^\infty$ converges to some $\alpha \in \K$. It follows that for all $\bgam \in \K^K$ we have 
\[
\lim_{n\to \infty} p_{\cT^n} (\bgam) = \lim_{n\to \infty} \alpha_n \Pi_{r=1}^R \langle \blam_{n,r}, \overline{\bgam} \rangle =  \alpha \Pi_{r=1}^R \langle \blam_{r}, \overline{\bgam} \rangle = p_{\cT} (\bgam).
\]
Furthermore, since $\cT$ is slice mix invertible, we must have $\alpha \neq 0$. We conclude that $p_{\cT} (\bgam)$ factors as desired. 
\end{proof}

\subsection{Multiplicities for border rank $R$ tensors}

We now examine geometric multiplicities of JGE values for slice mix invertible border $\K$-rank $R$ tensors. Namely we show that JGE values of a slice mix invertible tensor with border $\K$-rank $R$ have nonzero geometric multiplicity. This lemma plays an important role in the proof of Theorem \ref{theorem:rankvsmult} \eqref{it:NonderogatoryImpliesRankR}, as it allows one to conclude that a simple border $\K$-rank $R$ tensor  has a basis of JGE vectors. 

\begin{lemma}
\label{lemma:BorderRankGeo}
Let $\cT \in \K^{R \times R \times K}$ be a slice mix invertible tensor and assume $\cT$ has multilinear rank $(R,R,K)$ and border $\K$-rank $R$. If $\scrL \subset \R^K$ is a JGE value of $\cT$ with $\am(\scrL)=m \geq 1$, then $\gm(\scrL) \geq 1$. 
\end{lemma}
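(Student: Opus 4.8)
The plan is to use the border rank hypothesis to realize $\cT$ as a limit of genuine $\K$-rank $R$ tensors, each of which manifestly has a full set of JGE pairs, and then to extract a JGE vector for $\scrL$ by a compactness argument; slice mix invertibility of $\cT$ is what keeps the vectors involved from collapsing to $\b0$ or escaping to infinity.

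First I would set up the approximants. Since $\cT$ has border $\K$-rank $R$, choose $\cT^n \to \cT$ with $\rank_\K(\cT^n)=R$. As $\cT$ has multilinear rank $(R,R,K)$ and matrix rank is lower semicontinuous (cf.\ the remark after \eqref{eq:BRankVsMLRank}), every $\cT^n$ has full multilinear rank for $n$ large, so Lemma \ref{lemma:invertslice} lets us write $\cT^n=[\![\bA_n,\bB_n,\bC_n]\!]$ with $\bA_n,\bB_n$ invertible. A one-line computation from $\bT^n_\ell=\bA_n D_\ell(\bC_n)\bB_n^{\mathrm T}$ shows that the $r$th column of $\bB_n^{-\mathrm T}$ is a JGE vector of $\cT^n$ whose companion vector (the ``$\by$'' of \eqref{eq:JGEVdef}) is the $r$th column of $\bA_n$ and whose JGE value is $\spann(\bC_n(:,r))$; note no column of $\bC_n$ can vanish, since $\cT^n$ has $\K$-rank exactly $R$.

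Next I would isolate the columns tracking $\scrL$. Exactly as in the proof of Proposition \ref{proposition:alggreatergeo}, $p_{\cT^n}$ factors into $R$ linear forms whose root lines are the spans of the columns of $\bC_n$, and after passing to a subsequence these $R$ root lines converge to the $R$ root lines of $p_\cT$; since $\am(\scrL)=m\ge1$, the line $\scrL$ occurs among the limiting root lines, so after relabelling we may assume $\spann(\bC_n(:,1))\to\scrL$. Scale the first column of $\bB_n^{-\mathrm T}$ to a unit vector $\bu_n$ and rescale its companion vector to $\by_n$ so that $\bT^n_\ell\bu_n=c_{n,\ell}\,\by_n$ for all $\ell$, where $\bc_n:=\bC_n(:,1)/\|\bC_n(:,1)\|_2$; after a further subsequence $\bc_n\to\varepsilon\blam$ for a fixed sign $\varepsilon\in\{\pm1\}$ with $\|\blam\|_2=1$. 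The crux is to bound $\by_n$. Fix $\bv\in\K^K$ with $M:=\sum_\ell v_\ell\bT_\ell$ invertible; then $M_n:=\sum_\ell v_\ell\bT^n_\ell\to M$ is invertible with uniformly bounded inverse for $n$ large, and $M_n\bu_n=\bigl(\sum_\ell v_\ell c_{n,\ell}\bigr)\by_n$. Here $\sum_\ell v_\ell c_{n,\ell}\to\varepsilon\sum_\ell v_\ell\lambda_\ell$, and $\sum_\ell v_\ell\lambda_\ell\ne0$ because, by Proposition \ref{proposition:alggreatergeo}, it is (up to a nonzero scalar) one of the linear factors of $p_\cT$ evaluated at $\bv$, while $p_\cT(\bv)=\det M\ne0$. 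Hence $\|\by_n\|_2$ stays bounded, and a final subsequence gives $\bu_n\to\bx$ with $\|\bx\|_2=1$ and $\by_n\to\by$. Passing to the limit in $\bT^n_\ell\bu_n=c_{n,\ell}\by_n$ yields $\bT_\ell\bx=\lambda_\ell\,\bz$ for all $\ell$, with $\bz:=\varepsilon\by$; and $\bz\ne\b0$ since $M\bx=\bigl(\sum_\ell v_\ell\lambda_\ell\bigr)\bz$ and $M\bx\ne\b0$. Thus $(\scrL,\bx)$ is a JGE pair of $\cT$, so $\gm(\scrL)\ge1$.

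The main obstacle is the boundedness step for $\by_n$: a priori the companion vectors of the JGE pairs of the approximants $\cT^n$ (equivalently, columns of $\bA_n$ and $\bB_n$) can diverge — this is precisely the diverging-components phenomenon behind nonexistence of best low rank approximations — and one must rule this out. The resolution is that $\cT$, rather than the $\cT^n$, is slice mix invertible: inverting a \emph{fixed} nondegenerate slice mixture $M_n\to M$ of $\cT^n$ ties $\by_n$ to the bounded sequence $\bu_n$ and to a scalar bounded away from $0$. The remaining points — repeated passage to subsequences, tracking the sign $\varepsilon$, and the scaling/conjugation conventions in the factorizations of $p_{\cT^n}$ and $p_\cT$ — are routine.
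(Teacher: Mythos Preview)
Your proof is correct and follows essentially the same route as the paper: approximate $\cT$ by $\K$-rank $R$ tensors $\cT^n$, use the factorization of the characteristic polynomials to track a JGE value of $\cT^n$ converging to $\scrL$, extract JGE pairs by compactness, and invoke slice mix invertibility of $\cT$ to ensure the limiting companion vector is nonzero.

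The one place your argument is more elaborate than necessary is the step you flag as ``the main obstacle,'' namely bounding $\|\by_n\|$. You do this by inverting a fixed nondegenerate slice mixture $M_n\to M$ and arguing that the scalar $\sum_\ell v_\ell c_{n,\ell}$ stays bounded away from $0$ via the factorization of $p_\cT$. The paper's argument here is more direct: once $\bu_n$ and $\bc_n$ are normalized to unit length, the relation $\bT^n_\ell\bu_n=c_{n,\ell}\by_n$ and the fact that some coordinate $|c_{n,\ell}|\ge 1/\sqrt{K}$ immediately give $\|\by_n\|\le\sqrt{K}\,\max_\ell\|\bT^n_\ell\|_2$, which is bounded since $\cT^n\to\cT$. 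Slice mix invertibility is then only needed at the very end, to rule out $\by=\b0$ (if $\by=\b0$ then $\bT_\ell\bx=\b0$ for all $\ell$, contradicting the existence of an invertible slice mixture). Your route works, and it nicely ties the boundedness to the same hypothesis used for nondegeneracy; but the paper's version shows that diverging components are already tamed by the normalizations alone. One minor cosmetic point: your ``fixed sign $\varepsilon\in\{\pm1\}$'' should be a unit scalar in $\K$ when $\K=\C$, though this has no bearing on the argument.
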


\begin{proof}
Let $\{\cT^n\}_{n=1}^\infty$ be a sequence of $\K$-rank $R$ tensors which converges to $\cT$ and let $\blam \in \K^K$ be a unit vector such that $\spann(\blam)=\scrL$.  Proposition \ref{proposition:alggreatergeo} shows that $p_\cT (\bgam)$ and all the $p_{\cT^n} (\bgam)$ factor as products of linear terms.  Note that $\lim \cT^n = \cT$ implies that $\lim p_{\cT^n} = p_\cT$, so the normalized linear factors of the $p_{\cT^n}$ converge to the linear factors of $p_\cT$. It follows that there exists a sequence of unit vectors $\{\blam_n\}_{n=1}^\infty \subset \K^K$ such that $\spann(\blam_n)$ is a JGE value of $\cT^n$ for each $n$ and such that 
\[
\lim_{n \to \infty} \blam_n = \blam.
\]

As a consequence of Theorem \ref{theorem:rankvsmult}, each $\spann(\blam_n)$ has geometric multiplicity at least $1$ as a JGE value of $\cT^n$. For each $n$, let $\bx_n \in \K^R$ be a unit norm JGE vector of $\cT^n$ corresponding to $\blam_n$, and let $\by_n \in \K^R$ be the nonzero vector such that the tuple $(\cT^n,\blam_n,\bx_n,\by_n)$ satisfies equation \eqref{eq:JGEVdef} for each $n$. 

By construction the  sequences $\{\bx_n\}$ and $\{\blam_n\}$ lie in a compact sets. Additionally, since the $\cT^n$ converge to $\cT$, the norms of the frontal slices of $\cT^n$ are uniformly bounded, so since the $\blam_n$ all have norm $1$, the sequence $\{\by_n\}$ must also lie in a compact set. Therefore, by passing to a common subsequence if necessary, the sequences $\{\bx_n\}$ and $\{\by_n\}$ have limits $\bx,\by \in \K^R$. For each $k=1,\dots K$ we then have
\beq
\label{eq:limJGEV}
\bT_k \bx = \lim_{n \to \infty} \bT_k^n \bx_n =  \lim_{n \to \infty} \lam_{n,k} \by^n = \lam_k \by
\eeq
where $\lam_{n,k}$ denotes the $k$th entry of $\blam_n$.

By assumption, $\cT$ is slice mix invertible, so the frontal slices of $\cT$ cannot have a common null space and the vector $\by$ cannot be equal to zero. We conclude that $(\scrL,\bx)$ is a joint generalized eigenpair of $\cT$ and that $\gm(\scrL) \geq 1$.
\end{proof}

\subsection{Proof of Theorem \ref{theorem:rankvsmult}}
\label{sec:rankvsmultproof}

We are now in position to give the proof of our first main result, Theorem \ref{theorem:rankvsmult}. 

\begin{proof}

We first prove Item \eqref{it:RankIFFBasis}. Note that an orthogonal compression $\cT$ of $\cT'$ has $\K$-rank $R$ if and only if $\cT'$ has $\K$-rank $R$, so throughout the proof we WLOG assume that $\cT'$ has size $R \times R \times K$ and full multilinear rank, hence $\cT=\cT'$. 

First assume that $\cT$ has $R$ JGE values in $\K^K$ counting geometric multiplicity. That is, assume that $\cT$ has a basis of JGE vectors. Then there exists an invertible matrix $\bX \in \K^{R \times R}$, a collection of diagonal matrices\footnote{The matrices $\{\bLam_k\}_{k=1}^K$ are closely related to the JGE values of $\cT$. In particular the subspace $\spann((\bLam_1 (r,r),\bLam_2 (r,r),\dots,\bLam_K (r,r)) \subset \K^K$  is a JGE value of $\cT$ for each $r=1,\dots,R$. Here $\bLam_k (r,r)$ denotes the $(r,r)$th entry of $\bLam_k$.} $\{\bLam_k\}_{k=1}^K$, and matrix $\bY \in \K^{R \times R}$ such that 
\[
\bT_k \bX = \bY \bLam_k \qquad \mathrm{for \ all \ } k=1, \dots, K. 
\]
It follows that 
\[
\bT_k = \bY \bLam_k \bX^{-1} \qquad \mathrm{for \ all \ } k=1, \dots, K. 
\]

Set $\bA=\bY$ and $\bB= \bX^{-\mathrm{T}}$ and let $\bC \in \K^{R \times K}$ be the matrix whose $k$th row is given by the diagonal entries of $\bLam_k$. Then $\cpd=\cT$ is an $R$-term CPD for $\cT$, hence $\cT$ has rank less than or equal to $R$. That $\cT$ has rank $R$ then follows from the assumption that $\cT$ has multilinear rank $(R,R,K)$ and the fact that the rank of a tensor dominates its multilinear rank. 

Now, similar to the proof of Proposition \ref{proposition:alggreatergeo}, assume that $\cT$ has rank $R$ with CPD $\cT= \cpd$. By assumption $\cT$ has full multilinear rank, so Lemma \ref{lemma:invertslice} shows that the factor matrices $\bA,\bB \in \K^{R \times R}$ are invertible. Using $\bT_k = \bA D_k (\bC) \bB^\mathrm{T} $, we then have
\[
\bT_k \bB^{-\mathrm{T}} = \bA D_k (\bC) \qquad \mathrm{for \ all \ } k=1, \dots, K.
\]

For $r=1,\dots, R$, let $\bx_r$ be the $r$th column of the matrix $\bB^{-\mathrm{T}}$ and let $\by_r$ be the $r$th column of $\bA$. Then $\bx_r,\by_r \in \K^K$ are nonzero vectors for all $r$ and for each $r=1,\dots,R$ and $k=1,\dots,K$ we have 
\[
\bT_k \bx_r = c_{kr} \by_r
\]
where $c_{kr}$ denotes the $(k,r)$ entry of $\bC$. Setting $\scrL_r = \spann \big(c_{1r}, \dots, c_{Kr}\big)$, it follows that $(\scrL_r, \bx_r)$ is a joint generalized eigenpair of $\cT$ for each $r=1, \dots, R$. We conclude that $\cT$ has a JGE basis, that the JGE values of $\cT$ are given by the spans of the columns of $\bC$, and that the JGE vectors of $\cT$ are given by the columns of $\bB^{-\mathrm{T}}$, as claimed.

Item \eqref{it:NonderogatoryImpliesRankR} follows immediately from Item \eqref{it:RankIFFBasis} together with Lemma \ref{lemma:BorderRankGeo}. To prove Item \eqref{it:BRankLessRankImpliesDefective}, suppose $\cT'$ has border $\K$-rank $R$ and assume towards a contradiction that $\am(\scrL)=\gm(\scrL)$ for all JGE values $\scrL$ of $\cT$. Proposition \ref{proposition:alggreatergeo} shows that $p_\cT$ factors as a product of linear terms, so in this case $\cT$ must have a basis of JGE vectors which using Item \eqref{it:RankIFFBasis} shows that $\cT'$ has rank $R$, a contradiction. It follows that $\cT$ has some JGE value with algebraic multiplicity strictly greater than geometric multiplicity. The proof is completed by Lemma \ref{lemma:BorderRankGeo} which shows that every JGE value of $\cT$ has geometric multiplicity at least one.
\end{proof}

\section{Perturbation bounds for joint generalized eigenvalues}
\label{sec:perturbation}

We now examine how joint generalized eigenvalues are affected by a perturbation of the tensor of interest. Our main goal at this point is to provide a way to determine if border $\K$-rank $R$ tensors near a given slice mix invertible $\K$-rank $R$ tensor have distinct JGE values. That is, we want to show that nearby border $\K$-rank $R$ tensors are nondefective hence have rank equal to border rank. To achieve this goal, we provide a Bauer--Fike like bound for JGE eigenvalues which may be converted to a spectral matching distance bound in the case $K=2$. 

As in the previous section, it will be helpful to reduce to the case where $\cT$ has size $R \times R \times K$ and full multilinear rank.
To make this reduction we consider a ``tensor Procrustes problem" which serves as an analogue of the classical matrix Procrustes problem \cite{S66}. 

In particular we will show that given two tensors $\cW',\hat{\cW'} \in \K^{I_1 \times I_2 \times I_3}$, both having multilinear rank bounded above by $(R,R,K)$, there exist orthogonal compressions $\cW,\hat{\cW} \in \K^{R \times R \times K}$ of $\cW'$ and $\hat{\cW'}$ such that
\beq
\label{eq:ProcrustesEq}
\|\cW - \hat{\cW} \|_\mathrm{F} \leq \| \cW' - \hat{\cW'} \|_\mathrm{F}.
\eeq
Though it is straightforward how to compute these orthogonal compressions in the special case that the mode-$j$ fibers of $\cW'$ and $\hat{\cW'}$ span the same space for each $j=1,2,3$, one should not expect an arbitrary pair of low multilinear rank tensors to have this special property.

In Section \ref{sec:TensorProcrustes} we explain how to compute orthogonal compressions of $\cW'$ and $\hat{\cW'}$ which satisfy equation \eqref{eq:ProcrustesEq}. Additionally, we explain our perspective on orthogonal compressions which differs slightly from the perspective most commonly taken in applications. While the tensor Procrustes problem plays an important role in our theoretical results, the solution may be unsurprising to readers familiar with the matrix Procrustes problem. Such readers can advance to Section \ref{sec:existenceCorProof}.
 
\subsection{ Orthogonal compressions of pairs of tensors: The tensor Procrustes problem}
\label{sec:TensorProcrustes}

As previously discussed, the situation most common to application is that one has access to a measurement $\cM'=\cT'+\cN'$ of a rank $R$ tensor $\cT'$ which is corrupted by measurement error $\cN'$. In order to approximate the factors underlying the tensor $\cT'$, the practitioner will compute a rank $R$ approximation of $\cM'$. Typically the rank of $\cT'$ is significantly less than its dimensions, so to reduce the computational complexity of computing a rank $R$ approximation of $\cM'$ one first computes a low  multilinear rank approximation $\cW'$ of $\cM'$. One then computes a $R$ rank approximation of the low multilinear rank approximation $\cW'$. 

While this approach is natural to use in applied settings, it can have adverse effects when searching for theoretical guarantees. Letting $\hcT'$ denote a best border rank $R$ approximation of $\cM'$, to simplify to the $R \times R \times K$ setting, we need orthogonal compressions of both $\cT'$ and $\hcT'$. If one independently computes orthogonal compressions $\cT$ and $\hcT$ of $\cT'$ and $\hcT'$, then it is possible that the distance between $\cT$ and $\hcT$ is greater than the distance between $\cT'$ and $\hcT'$. This difficulty must be treated with particular care if the span of the mode-$j$ subspaces of $\cT'$ is not the same as the span of the mode-$j$ subspace of $\hcT$. 

Roughly speaking this difficulty is resolved by first applying an orthogonal transformation to $\hcT$ which both does not increase the distance between $\cT$ and $\hcT$ and makes it so that the mode-$j$ subspaces of the orthogonal transformation of $\hcT$ are the same as those of $\cT$. From this point it is straightforward how to compute the appropriate orthogonal compressions. As the (border) rank of $\hcT'$ is invariant under orthogonal transformations, for our purposes there is no loss in studying an orthogonal transformation of $\hcT'$ rather than $\hcT'$ itself.\footnote{In practice one rarely has access to both $\cT'$ and $\hcT'$, but this is immaterial to our present discussion.}

\begin{theorem}
\label{theorem:TensorProcrustes} Let $\cW', \hat{\cW'} \in \K^{I_1 \times \cdots \times I_\ell}$ be tensors having multilinear rank $(R_1, \cdots, R_\ell)$ and $(\hR_1, \cdots, \hR_\ell)$, respectively. Let $\bV: \K^{R_j} \to \K^{I_j}$ be column-wise orthonormal with range equal to the subspace spanned by the mode-$j$ fibers of $\cW'$. If $\hR_j \leq R_j$, then there exists a column-wise orthonormal matrix $\hbV: \R^{R_j} \to \R^{I_j}$ with range containing the subspace spanned by the mode-$j$ fibers of $\hcW$ such that
\[
\|\cW' \cdot_j \bV^\mathrm{H} - \hat{\cW'} \cdot_j \hbV^\mathrm{H} \|_\mathrm{F} \leq \|\cW'-\hat{\cW'}\|_\mathrm{F}.
\]
As an immediate consequence, letting $R^*_i = \max \{R_i,\hR_i\}$ for each $i=1,\dots,\ell$ there exist orthogonal compressions $\cW,\hcW \in \K^{R^*_1 \times \cdots \times R^*_\ell} $ of $\cW'$ and $\hat{\cW'}$, respectively, such that
\[
\|\cW-\hcW\|_\mathrm{F} \leq \|\cW' -\hat{\cW'}\|_\mathrm{F}. 
\]
\end{theorem}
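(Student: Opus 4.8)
The plan is to reduce the multi-tensor statement to the single-mode-at-a-time situation and, within each mode, to a matrix Procrustes argument. First I would fix a single mode $j$ and work with the mode-$j$ unfoldings. Write $\bW = \cW'_{(j)} \in \K^{I_j \times N}$ and $\hbW = \hat{\cW'}_{(j)} \in \K^{I_j \times N}$ (with $N = \prod_{i \neq j} I_i$), using a consistent ordering of columns so that $\|\cW'-\hat{\cW'}\|_\mathrm{F} = \|\bW - \hbW\|_\mathrm{F}$. The column space of $\bW$ is $R_j$-dimensional with orthonormal basis the columns of $\bV$, and the column space of $\hbW$ is $\hR_j$-dimensional. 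The modal product $\cW' \cdot_j \bV^\mathrm{H}$ has mode-$j$ unfolding $\bV^\mathrm{H} \bW$; since $\bV \bV^\mathrm{H}$ is the orthogonal projector onto the range of $\bW$ and $\bV^\mathrm{H}$ is an isometry on that range, $\|\cW' \cdot_j \bV^\mathrm{H} - \hat{\cW'} \cdot_j \hbV^\mathrm{H}\|_\mathrm{F} = \|\bV^\mathrm{H} \bW - \hbV^\mathrm{H} \hbW\|_\mathrm{F}$ whenever the range of $\hbV$ contains the range of $\hbW$, because then $\hbV \hbV^\mathrm{H} \hbW = \hbW$ and left-multiplication by $\hbV$ is an isometry.

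The heart of the argument is then: choose the isometry $\hbV : \K^{R_j} \to \K^{I_j}$ (note $\hR_j \le R_j$, so the dimensions allow it) whose range contains $\ran \hbW$ so as to minimize $\|\bV^\mathrm{H} \bW - \hbV^\mathrm{H} \hbW\|_\mathrm{F}$. I claim the minimum is at most $\|\bW - \hbW\|_\mathrm{F}$. To see this, complete $\bV$ to a full unitary $\widetilde{\bV} = [\bV \ \ \bV_\perp] \in \K^{I_j \times I_j}$; then $\|\bW - \hbW\|_\mathrm{F}^2 = \|\bV^\mathrm{H}\bW - \bV^\mathrm{H}\hbW\|_\mathrm{F}^2 + \|\bV_\perp^\mathrm{H} \bW - \bV_\perp^\mathrm{H}\hbW\|_\mathrm{F}^2 = \|\bV^\mathrm{H}\bW - \bV^\mathrm{H}\hbW\|_\mathrm{F}^2 + \|\bV_\perp^\mathrm{H}\hbW\|_\mathrm{F}^2$, using $\bV_\perp^\mathrm{H}\bW = 0$. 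So it suffices to produce $\hbV$ with $\|\bV^\mathrm{H}\bW - \hbV^\mathrm{H}\hbW\|_\mathrm{F}^2 \le \|\bV^\mathrm{H}\hbW - \bV^\mathrm{H}\bW\|_\mathrm{F}^2 + \|\bV_\perp^\mathrm{H}\hbW\|_\mathrm{F}^2 = \|\bW-\hbW\|_\mathrm{F}^2$; the natural candidate is $\hbV = \bV \bO$ for a suitable unitary $\bO \in \K^{R_j \times R_j}$ chosen so that $\ran(\bV\bO) \supseteq \ran \hbW$ — this is possible precisely because $\dim \ran \hbW = \hR_j \le R_j$ — after which $\hbV^\mathrm{H}\hbW = \bO^\mathrm{H}\bV^\mathrm{H}\hbW$ and the problem becomes the classical orthogonal Procrustes problem of approximating $\bV^\mathrm{H}\bW$ by $\bO^\mathrm{H}(\bV^\mathrm{H}\hbW)$ over unitary $\bO$. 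The Procrustes bound (e.g. \cite{S66}) gives a minimizer with residual no larger than the residual of $\bO = \bI$, i.e. $\|\bV^\mathrm{H}\bW - \bV^\mathrm{H}\hbW\|_\mathrm{F}$, but one must additionally respect the constraint $\ran(\bV\bO)\supseteq\ran\hbW$; I would handle this by first rotating so that $\ran \hbW$ maps into a fixed coordinate subspace of $\K^{R_j}$ and then running Procrustes on the complementary block, checking that the constrained optimum still beats $\|\bW-\hbW\|_\mathrm{F}$. This last bookkeeping step — ensuring the range-containment constraint does not spoil the Procrustes inequality — is the main obstacle; everything else is routine isometry algebra.

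For the final ``immediate consequence," I would apply the single-mode result iteratively, once per mode $i = 1,\dots,\ell$, each time compressing both tensors in mode $i$ down to dimension $R_i^* = \max\{R_i,\hR_i\}$. Concretely, compress $\cW'$ in mode $i$ using an isometry with range equal to $\ran \cW'_{(i)}$ (dimension $R_i \le R_i^*$, pad if necessary) and compress $\hat{\cW'}$ using the $\hbV$ furnished by the theorem; at each stage the Frobenius distance does not increase, and modal compressions in distinct modes commute, so after $\ell$ steps we land in $\K^{R_1^* \times \cdots \times R_\ell^*}$ with $\|\cW - \hcW\|_\mathrm{F} \le \|\cW' - \hat{\cW'}\|_\mathrm{F}$. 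One should note that orthogonal compression does not change multilinear rank, so the intermediate tensors retain the rank bounds needed to apply the theorem in the next mode, and that the resulting $\cW,\hcW$ are genuine orthogonal compressions in the sense of \eqref{eq:orthcompr}.
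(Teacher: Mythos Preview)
Your reduction to mode-$j$ unfoldings and the Pythagorean identity $\|\bW-\hbW\|_\mathrm{F}^2 = \|\bV^\mathrm{H}\bW-\bV^\mathrm{H}\hbW\|_\mathrm{F}^2 + \|\bV_\perp^\mathrm{H}\hbW\|_\mathrm{F}^2$ are correct and already show that the choice $\hbV=\bV$ meets the Frobenius inequality. The genuine gap is the range-containment constraint, and your proposed ansatz $\hbV=\bV\bO$ cannot satisfy it: right-multiplication by any unitary $\bO\in\K^{R_j\times R_j}$ leaves $\ran(\bV\bO)=\ran\bV$ unchanged, so the condition $\ran\hbV\supseteq\ran\hbW$ collapses to $\ran\bW\supseteq\ran\hbW$, which is exactly the hypothesis you do \emph{not} have. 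The dimension inequality $\hR_j\le R_j$ is irrelevant here --- no rotation inside $\K^{R_j}$ can move $\hbV$ off the fixed $R_j$-plane $\ran\bV$ in $\K^{I_j}$. Your remark about ``rotating so that $\ran\hbW$ maps into a fixed coordinate subspace of $\K^{R_j}$'' inherits the same defect.

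The paper's Lemma~\ref{lemma:ProcrustesIsometries} resolves this by running Procrustes at the ambient level: minimize $\|\bW-\Psi\hbW\|_\mathrm{F}$ over all unitaries $\Psi\in\K^{I_j\times I_j}$. The minimizer $\Psi=\bV_0\bZ_0^\mathrm{H}$ comes from an SVD $\bW\hbW^\mathrm{H}=\bV_0\Sigma\bZ_0^\mathrm{H}$, and the freedom in the SVD lets one choose $\bV_0,\bZ_0$ so that the leading $R_j$ columns satisfy $\ran\bV_{0,1}=\ran\bW$ and $\ran\bZ_{0,1}\supseteq\ran\hbW$; this forces $\ran(\Psi\hbW)\subseteq\ran\bW$. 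One then takes $\hbV=\Psi^{-1}\bV$, which is column-wise orthonormal with $\ran\hbV\supseteq\ran\hbW$, and $\|\bV^\mathrm{H}\bW-\hbV^\mathrm{H}\hbW\|_\mathrm{F}=\|\bV^\mathrm{H}(\bW-\Psi\hbW)\|_\mathrm{F}\le\|\bW-\Psi\hbW\|_\mathrm{F}\le\|\bW-\hbW\|_\mathrm{F}$. Your iteration over modes for the second conclusion is fine once the single-mode step is repaired.
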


As will become clear, we may  use Theorem \ref{theorem:TensorProcrustes} to without loss of generality restrict to the case of $R \times R \times K$ tensors. Before proving the theorem we give a technical lemma. 

\begin{lemma}
\label{lemma:ProcrustesIsometries}
Let $\bM,\bH \in \K^{I_1 \times I_2}$ be matrices of rank $R_\bM$ and $R_\bN$, respectively, where $R_\bM \geq R_\bN$. There exists a unitary matrix $ \bU \in \K^{I_1 \times I_1}$ such that $\ran \bM \supseteq \ran \bU \bH$ and so that 
\[
\|\bM-\bU \bN\|_\mathrm{F} \leq \|\bM -\bN \|_\mathrm{F}.
\]
As a consequence, given any column-wise orthonormal matrix $\bU_\bM \in \K^{I_1 \times R_\bM}$ mapping onto $\ran \bM$, there is a column-wise orthonormal matrix $\bU_\bN  \in \K^{I_1 \times R_\bN}$ such that $\ran \bU_\bN \supseteq \ran \bN$ and such that
\[
\| \bU_\bM^\mathrm{H} \bM - \bU_\bN^\mathrm{H} \bN \|_\mathrm{F} \leq \|\bM-\bN \|_\mathrm{F}.
\]
\end{lemma}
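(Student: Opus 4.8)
The plan is to reduce to a ``compressed'' Procrustes problem supported on $\ran\bM$ and then invoke the nuclear norm together with the trace inequality $|\Tr A|\le\|A\|_*$. First I would fix a column-wise orthonormal $\bU_\bM\in\K^{I_1\times R_\bM}$ with $\ran\bU_\bM=\ran\bM$ and write $\bM=\bU_\bM\hat\bM$ with $\hat\bM:=\bU_\bM^\mathrm{H}\bM$, and similarly fix any column-wise orthonormal $\bU_\bN^0\in\K^{I_1\times R_\bN}$ with $\ran\bU_\bN^0=\ran\bN$ and write $\bN=\bU_\bN^0\hat\bN$. The key structural point is this: for every column-wise orthonormal $W\in\K^{R_\bM\times R_\bN}$ there is a unitary $\bU\in\K^{I_1\times I_1}$ with $\bU\bU_\bN^0=\bU_\bM W$ (since $\bU_\bN^0$ and $\bU_\bM W$ are column-wise orthonormal $I_1\times R_\bN$ matrices, extend each to a full unitary and compose), and any such $\bU$ satisfies $\ran(\bU\bN)=\ran(\bU_\bM W\hat\bN)\subseteq\ran\bU_\bM=\ran\bM$, together with the identity $\|\bM-\bU\bN\|_\mathrm{F}^2=\|\bM\|_\mathrm{F}^2+\|\bN\|_\mathrm{F}^2-2\Re\Tr(WG)$, where $G:=\hat\bN\hat\bM^\mathrm{H}\in\K^{R_\bN\times R_\bM}$.

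Next I would choose $W$ to make $\Re\Tr(WG)$ as large as possible: by the equality case of von Neumann's trace inequality (a routine SVD computation, adapted to rectangular column-orthonormal $W$) there is a column-wise orthonormal $W$ with $\Re\Tr(WG)=\|G\|_*$, the nuclear norm. Since $\bN\bM^\mathrm{H}=\bU_\bN^0 G\bU_\bM^\mathrm{H}$ differs from $G$ only by column-orthonormal factors on the left and right, it has the same nonzero singular values, so $\|G\|_*=\|\bN\bM^\mathrm{H}\|_*$ and hence $\|\bM-\bU\bN\|_\mathrm{F}^2=\|\bM\|_\mathrm{F}^2+\|\bN\|_\mathrm{F}^2-2\|\bN\bM^\mathrm{H}\|_*$. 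On the other hand $\|\bM-\bN\|_\mathrm{F}^2=\|\bM\|_\mathrm{F}^2+\|\bN\|_\mathrm{F}^2-2\Re\Tr(\bM^\mathrm{H}\bN)$, and by cyclicity and $|\Tr A|\le\|A\|_*$ for square $A$ one has $\Re\Tr(\bM^\mathrm{H}\bN)=\Re\Tr(\bN\bM^\mathrm{H})\le\|\bN\bM^\mathrm{H}\|_*$. Comparing the two displays gives $\|\bM-\bU\bN\|_\mathrm{F}\le\|\bM-\bN\|_\mathrm{F}$, which is the first assertion (with $\bH=\bN$).

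For the ``consequence'', given an arbitrary column-wise orthonormal $\bU_\bM$ onto $\ran\bM$, I would take the $\bU$ just constructed and set $\bU_\bN:=\bU^\mathrm{H}\bU_\bM$. (This has $R_\bM$ columns, matching $\bU_\bM$; the ``$R_\bN$'' in the statement must be a typo, as otherwise $\bU_\bM^\mathrm{H}\bM$ and $\bU_\bN^\mathrm{H}\bN$ would not be of compatible size.) Then $\bU_\bN$ is column-wise orthonormal, and from $\ran(\bU\bN)\subseteq\ran\bM=\ran\bU_\bM$ we get $\ran\bN=\bU^\mathrm{H}\bigl(\ran(\bU\bN)\bigr)\subseteq\bU^\mathrm{H}(\ran\bM)=\ran\bU_\bN$. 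Finally $\bU_\bN^\mathrm{H}\bN=\bU_\bM^\mathrm{H}\bU\bN$, so $\bU_\bM^\mathrm{H}\bM-\bU_\bN^\mathrm{H}\bN=\bU_\bM^\mathrm{H}(\bM-\bU\bN)$, and because the columns of $\bM$ and of $\bU\bN$ all lie in $\ran\bU_\bM$, on which $\bU_\bM^\mathrm{H}$ acts isometrically, $\|\bU_\bM^\mathrm{H}\bM-\bU_\bN^\mathrm{H}\bN\|_\mathrm{F}=\|\bM-\bU\bN\|_\mathrm{F}\le\|\bM-\bN\|_\mathrm{F}$.

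The conceptual heart is the observation that the range constraint $\ran(\bU\bN)\subseteq\ran\bM$ is free: the best attainable value $\|\bN\bM^\mathrm{H}\|_*$ in the unconstrained matrix Procrustes problem is already achieved within the constrained class, which is transparent once one passes to the compressed matrices $\hat\bM,\hat\bN$. The part that is merely fiddly rather than deep is the bookkeeping needed to guarantee a genuine unitary $\bU\in\K^{I_1\times I_1}$ (not just a partial isometry) with the prescribed action on $\bU_\bN^0$, and to exhibit the column-orthonormal $W$ realizing $\|G\|_*$; I expect this, rather than any subtle estimate, to be the main thing requiring care.
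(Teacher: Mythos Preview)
Your proof is correct and follows essentially the same approach as the paper: both rely on the orthogonal Procrustes solution (the SVD of $\bM\bN^{\mathrm H}$, equivalently the nuclear norm of $\bN\bM^{\mathrm H}$) and observe that the optimum is already attained by a unitary sending $\ran\bN$ into $\ran\bM$; for the consequence, both set $\bU_\bN=\bU^{\mathrm H}\bU_\bM$ and your remark that ``$R_\bN$'' should read ``$R_\bM$'' matches the paper's own construction. The only organizational difference is that the paper first takes the unconstrained Procrustes minimizer and then exhibits the freedom in the SVD factors to enforce the range condition, whereas you first parametrize the range-constrained class by $W$ and then show it achieves the unconstrained optimum; these are dual ways of expressing the same computation.
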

\begin{proof}
The proof of Lemma \ref{lemma:ProcrustesIsometries} is a routine argument using the solution of the orthogonal Procrustes problem \cite{S66}. Details are given in the supplementary materials. 
\end{proof}

We now give the proof of Theorem \ref{theorem:TensorProcrustes}.

\begin{proof}
The main strategy of the proof is to apply Lemma \ref{lemma:ProcrustesIsometries} to each mode of the tensors $\cW'$ and $\hat{\cW'}$. To this end, fix $j$ and let $\cW'_{(j)}$ and $\hat{\cW'}_{(j)}$ be mode-$j$ unfoldings of $\cW'$ and $\hat{\cW'}$, respectively, with the same ordering of the fibers.  Recall that $\cW_{(j)}$  has rank $R_j$ and $\hat{\cW'}_{(j)}$ has rank $\hR_j$, and WLOG assume $R_j \geq \hR_j$. 

Using Lemma \ref{lemma:ProcrustesIsometries}, given any column-wise orthonormal matrix $\bV \in \R^{I_j \times R_j}$ with range equal to $\ran \cW'_{(j)}$, there exists a column-wise orthonormal matrix $\hbV \in \K^{I_j \times R_j}$ such that the range of $\hbV$ contains $\ran \hat{\cW'}_{(j)}$ and such that
\[
\| \bV^\mathrm{H} \cW'_{(j)} - \hbV^\mathrm{H} \hat{\cW'}_{(j)} \|_\mathrm{F} \leq \|\cW'_{(j)}-\hat{\cW'}_{(j)} \|_\mathrm{F}.
\]
Since the ordering of the mode-$j$ fibers is the same in $\cW_{(j)}$ and $\hat{\cW'}_{(j)}$ we have
\[
\| \bV^\mathrm{H} \cW'_{(j)} - \hbV^\mathrm{H} \hat{\cW'}_{(j)}\|_\mathrm{F} = \|\cW' \cdot_j \bV^\mathrm{H} - \hat{\cW'} \cdot_j \hbV^\mathrm{H}\|_\mathrm{F}  \quad \quad \mathrm{and} \quad \quad \|\cW'_{(j)}-\hat{\cW'}_{(j)}\|_\mathrm{F}=\|\cW'-\hat{\cW'} \|_\mathrm{F}
\]
from which the first conclusion follows.

The second part of the result follows from a standard argument using induction on the mode of the fibers. 
\end{proof}

It is worth noting that the Procrustes compressions $\cW$ and $\hcW$ one arrives at using the methodology in the proof are not necessarily optimal. That is, there may exist cores $\cW^*$ and $\hat{\cW^*}$ such that
\[
\|\cW^*-\hat{\cW^*}\|_\mathrm{F} < \|\cW - \hcW \|_\mathrm{F} \leq \|\cW'- \hat{\cW'}\|_\mathrm{F}.
\]
One may compute locally optimal cores by using an ALS approach for instance where one continues to iterate over the modes of the tensors and solves the matrix Procrustes problem for each corresponding unfolding. Methods for computing globally optimal cores are outside the scope of this article.

A similar issue occurs when computing a best low multilinear rank approximation of a given tensor $\cM' \in \K^{I_1 \times I_2 \times I_3}$. Here one wants to minimize $\|\cM'-\cW'\|$ subject to a multilinear rank constraint on $\cW'$. A common approach is to compute a MLSVD of $\cM'$ and truncate at the specified multilinear rank. While this approach is in a sense optimal ``per mode", since the MLSVD treats each mode independently the result is rarely globally optimal when all modes are considered together.

\subsection{Proof of Proposition \ref{proposition:existenceCor}}
\label{sec:existenceCorProof}

We are now in position to prove Proposition \ref{proposition:existenceCor}.

\begin{proof}
The assumption that every border $\K$-rank $R$ tensor in an open $\epsilon$ ball around $\cT$ is simple in fact implies that this $\epsilon$ ball cannot contain a tensor with border $\K$-rank strictly less than $R$. We temporarily assume this implication holds and give a detailed proof at the end.

Let $\cM' \in \K^{I_1 \times I_2 \times I_3}$ be any tensor such that $\|\cM'-\cT'\| < \epsilon/2$, and let $\hcT'$ be a best border $\K$-rank $R$ approximation of $\cM'$. Note that $\cT'$ has multilinear rank $(R,R,K)$ by assumption, so inequality \eqref{eq:BRankVsMLRank} shows that $\cT'$ has border $\K$-rank $R$ in addition to having $\K$-rank $R$. It follows that 
\[
\|\hcT' - \cM' \| \leq \| \cT' - \cM' \|,  \quad \quad \mathrm{hence} \quad \quad \| \cT' - \hcT' \| < \epsilon,
\]
as otherwise $\cT'$ would be a strictly better border $\K$-rank $R$ approximation of $\cM'$ than $\hcT'$.  To show that $\cM'$ has a best $\K$-rank $R$ approximation, it is sufficient to show that $\hcT'$ has $\K$-rank equal to $R$.

Since $\hcT'$ has border $\K$-rank $R$, the rank of each mode-$i$ unfolding of $\hcT'$ is less than or equal to $R$. Therefore,
using Theorem \ref{theorem:TensorProcrustes}, we may simultaneously orthogonally compress $\cT'$ and $\hcT'$ to tensors of size $R \times R \times K^*$ for some $K^* \leq R$ without increasing the Frobenius distance between the tensors. Therefore we may WLOG we may assume $\cT'=\cT$ and $\hcT'=\hcT$ both have size $R \times R \times K^*$. After this restriction, our assumptions imply that $\hcT$ is simple. An application of Theorem \ref{theorem:rankvsmult} \eqref{it:NonderogatoryImpliesRankR} then shows that $\hcT$ has rank $R$. 

We next prove that every rank $R$ tensor in the open ball of Frobenius radius $\epsilon$ centered at $\cT$ has a unique CPD. By assumption every border rank $R$ tensor in this ball is simple. As previously mentioned, this assumption implies that the $\epsilon$ ball does not contain a tensor with border rank less than $R$, hence every rank $R$ tensor in the ball is simple. Using Theorem \ref{theorem:rankvsmult} \eqref{it:RankIFFBasis} together with Lemma \ref{lemma:DistinctENsImpliesEigenbasis}, it follows that every tensor inside this ball satisfies Kruskal's condition \cite{K77}, hence every tensor in this ball has a unique CPD.

We now show that the set of tensors contained in this open ball which do not have a unique best $\K$-rank $R$ approximation has Lebesgue measure $0$. For tensors with entries in $\C$ (equipped with Lebesgue measure over the complexes) this is an immediate consequence of \cite{QML19}. For tensors with entries in $\R$ (equipped with Lebesgue measure over the reals) this follows from  \cite[Corollary 18]{FO14} together with the first part of this result.

It remains to show that every tensor in an $\epsilon$ ball centered at $\cT'$ has border $\K$-rank at least $R$. To this end, suppose towards a contradiction that there is some tensor $\cS'$ such that $\|\cT'-\cS'\|<\epsilon$ and such that $\cS'$ has border $\K$-rank $N < R$. Also fix $\delta_1,\delta_2 > 0$ such that $\|\cT'-\cS'\|+\delta_1+\delta_2 < \epsilon$. Since $\cS'$ has border $\K$-rank $N$, there must exist some tensor $\cW'=[\![\bA',\bB',\bC']\!]$ such that $\|\cW'-\cS'\|<\delta_1$ and such that $\K$-$\rank(\cW')=\K$-$\underline{\rank}(\cW') = N$. Furthermore, a routine argument shows that $\cW$ can be chosen so that its factors $\bA'$ and $\bB'$ have full column rank. 

One can then extend the factors $\bA',\bB',\bC'$ to matrices 
\[
\tilde{\bA}'=[\bA' \ \ \bE_{\bA}'] \qquad \tilde{\bB}'=[\bB' \ \ \bE_{\bB}'] \qquad \tilde{\bC}'=[\bC' \ \  \bE_{\bC}']
\] 
each having $R$ columns where $\bE_{\bA}',\bE_{\bB}'$ and $\bE_{\bC}'$ are chosen so that $\tilde{\bA}'$ and $\tbB'$ have full column rank, $\tilde{\bC}'$ has a repeated column, and $\|\cW' - [\![\tbA',\tbB',\tbC']\!]\| < \delta_2$. Using Theorem \ref{theorem:rankvsmult} shows that the tensor $[\![\tbA',\tbB',\tbC']\!]$ has both $\K$-rank and $\K$-border rank equal to $R$. Furthermore, by construction we have
\[
\|\cT'-[\![\tbA',\tbB',\tbC']\!]\| < \|\cT'-\cS'\|+\delta_1+\delta_2 < \epsilon.
\]
However, Theorem \ref{theorem:rankvsmult} also shows that this tensor is not simple, since $\tbC'$ has repeated columns. This contradicts our assumption that every border rank $R$ tensor in an $\epsilon$ ball around $\cT'$ is simple.
\end{proof}

\subsection{Metrics for joint generalized eigenvalues}

We now discuss in more detail the measures of distance we use for joint generalized eigenvalues. Recall the chordal metric between  two one-dimensional subspaces $\scrL_1,\scrL_2 \subseteq \K^K$ is the sine of the angle between $\scrL_1$ and $\scrL_2$. Letting $\blam_i$ be a unit vector such that $\spann(\blam_i)=\scrL_i$ for $i=1,2$, it is not difficult to show that the chordal metric may be computed as
\beq
\label{eq:ChordalEqs}
\chi(\scrL_1,\scrL_2)=\|\bP_{(\scrL_1)^\perp} \blam_2 \|_2 = \max_{\substack{\bgam \in (\scrL_1)^\perp \\ \|\bgam \|_2=1}} |\langle \blam_2,\bgam \rangle | { = \sqrt{1- |\langle \blam_1,\blam_2 \rangle|^2.}}
\eeq
Here $(\scrL_1)^\perp \subset \K^K$ is the orthogonal complement of $\scrL_1 \subset \K^K$ and $\bP_{(\scrL_1)^\perp}$ denotes the projection onto $(\scrL_1)^\perp$. Note that $ |\langle \blam_1,\blam_2 \rangle|$ is the cosine of the angle between the subspaces $\scrL_1$ and $\scrL_2$.

Also recall the matching distance defined in equation \eqref{eq:MatchingDistanceDef} which gives a metric between the spectra of two tensors. In addition to the matching distance we will need a weaker measure of separation between the spectra of two tensors. To this end, for $i=1,2$, let $\cT_i \in \K^{R \times R \times K}$ be a tensor having $R$ JGE values counting algebraic multiplicity, and let $\{\scrL_{i,k}\}_{k=1}^R$ be the spectrum of $\cT_i$. The \df{spectral variation} from $\cT_1$ to $\cT_2$, denoted $\sv[\cT_1,\cT_2]$, is defined by
\[
\sv[\cT_1,\cT_2] = \max_i \min_j \chi(\scrL_{1,j}, \scrL_{2,i}).
\]

The key difference between spectral variation and matching distance is that, in matching distance, each JGE value of $\cT_2$ must be paired with a unique JGE value of $\cT_1$, while in spectral variation the JGE values of $\cT_2$ need not be paired with unique JGE values of $\cT_1$. Due to the lack of unique pairings in spectral variation, the spectral variation from $\cT_1$ to $\cT_2$ need not equal the spectral variation from $\cT_2$ to $\cT_1$. Furthermore, the spectral variation from $\cT_1$ to $\cT_2$ may be equal to zero while the spectra of $\cT_1$ and $\cT_2$ are not equal. As a consequence, while matching distance defines a metric on sets of JGE values, spectral variation does not. 

\begin{exa}
\label{exa:SVdefective}
Let $\cT_1,\cT_2 \in \K^{R \times R \times 2}$. Suppose $\cT_1$ has JGE values
\[
\scrL_{1,1}=\spann (\be_1) \mathand \scrL_{1,2}= \spann (\be_{ 2}) 
\]
and that $\cT_2$ has JGE values
\[
\scrL_{2,1} = \scrL_{2,2} =\spann (\be_1).
\]
Here $\be_1$ and $\be_2$ are the standard basis vectors in $\K^2$. Then one has 
\[
0 = \sv[\cT_1,\cT_2] \neq \sv[\cT_2, \cT_1] = 1. 
\]
\end{exa}

Of course, matching distance bounds are more desirable to obtain than spectral variation bounds; however, they are also typically much more difficult to obtain. A standard strategy to obtain a matching distance bound is to first find a spectral variation bound which grows linearly with the magnitude of a perturbation. 

\subsection{Bauer--Fike Theorem for JGE values}
We now present our main perturbation theoretic bound for the spectral variation between two tensors of $\K$-rank $R$. The bound we present is in essence determined by the spectral norm of (a transformation of) the error tensor.

\begin{theorem}
\label{theorem:TensBauerFike}
Let $\cT$ and $\cW$ be $R \times R \times K$ tensors with entries in $\K$. Assume that $\cT$ has $\K$-rank $R$ with CPD $\cpd$ where the columns of $\bC$ are normalized to have unit norm. Let $\{\scrL_k\}_{k=1}^K$ be the spectrum of $\cT$ and set $\cE = \cT -\cW$. If $\scrW$ is a JGE value of $\cW$ with geometric multiplicity at least one, then 
\beq
\label{eq:JGEvaluebound}
\min_k \chi (\scrL_k, \scrW) { \leq \sqrt{R} \Big\|\cE \cdot_1 \bA^{-1} \cdot_2 \bB^{-1} \cdot_3 \bP_{\scrW^\perp} \Big\|_{\mathrm{sp}} } \leq   \frac{\sqrt{R} \spnorm{{ \cE}}}{\sigma_{\min} (\bA) \sigma_{\min} (\bB)}. 
\eeq
Here $\sigma_{\min} (\bA)$ and $\sigma_{\min} (\bB)$ denote the smallest\footnote{Lemma \ref{lemma:invertslice} shows that $\bA$ and $\bB$ are invertible, hence these singular values are nonzero.} singular values of $\bA$ and $\bB$, respectively and $\bP_{\scrW^\perp}$ denotes the projection onto the orthogonal complement of $\scrW$. As a consequence, if $\cW$ is a slice mix invertible border $\K$-rank $R$ tensor, then
\beq
\label{eq:JGEBauerFike}  
\sv [\cT, \cW] {  \leq  \sqrt{R} \Big\| \cE \cdot_1 \bA^{-1} \cdot_2 \bB^{-1}  \Big\|_{\mathrm{sp}} } \leq \frac{\sqrt{R} \spnorm{{ \cE}}}{\sigma_{\min} (\bA) \sigma_{\min} (\bB)}.
\eeq
In the following special cases the coefficient $\sqrt{R}$ may be dropped from equations \eqref{eq:JGEvaluebound} and \eqref{eq:JGEBauerFike}$\mathrm{:}$ 
\begin{enumerate}
\item $K=2$.
\item $\cW$ is also slice mix invertible and has $\K$-rank $R$  with CPD $[\![\bA_\cW,\bB_\cW,\bC_\cW]\!]$ where $\bA_{\cW}=\bA$ or $\bB_\cW=\bB$.
\end{enumerate}
That is, in these special cases one has
\beq
\label{eq:JGEBauerFikeSpecial}  
\sv [\cT, \cW] {  \leq   \Big\| \cE \cdot_1 \bA^{-1} \cdot_2 \bB^{-1}  \Big\|_{\mathrm{sp}} } \leq \frac{ \spnorm{{ \cE}}}{\sigma_{\min} (\bA) \sigma_{\min} (\bB)}.
\eeq
\end{theorem}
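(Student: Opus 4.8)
The plan is to turn the joint generalized eigenvalue relation for $\cW$ into a perturbed eigenvalue system written in the eigenbasis of $\cT$, to eliminate the unknown eigenvalue direction by projecting onto its orthogonal complement, and to aggregate the resulting coordinatewise residual estimates. Since $\scrW$ has geometric multiplicity at least one, fix a unit vector $\bw\in\K^K$ with $\spann(\bw)=\scrW$, a nonzero JGE vector $\bx\in\K^R$, and a nonzero $\by\in\K^R$ with $\bW_\ell\bx=w_\ell\by$ for all $\ell$. Write $\bW_\ell=\bT_\ell-\bE_\ell$ with $\bE_\ell:=\cE(:,:,\ell)$, invoke invertibility of the factors $\bA,\bB$ of $\cT$ (Lemma~\ref{lemma:invertslice}), substitute $\bT_\ell=\bA D_\ell(\bC)\bB^{\mathrm{T}}$, and left-multiply by $\bA^{-1}$. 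Setting $\bu:=\bB^{\mathrm{T}}\bx\neq\b0$ and $\bv:=\bA^{-1}\by\neq\b0$, and introducing the tensor $\cG:=\cE\cdot_1\bA^{-1}\cdot_2\bB^{-1}$ (whose frontal slices are $\bA^{-1}\bE_\ell\bB^{-\mathrm{T}}$, so that $\cG(:,:,\ell)\bu=\bA^{-1}\bE_\ell\bx$), one arrives at the system
\[
D_\ell(\bC)\,\bu-w_\ell\,\bv=\cG(:,:,\ell)\,\bu,\qquad \ell=1,\dots,K .
\]

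\textbf{The projection step.} Reading off coordinate $r$ of this identity as $\ell$ ranges over $1,\dots,K$ produces, for each $r$, a vector equation in $\K^K$ of the form $u_r\,\bc_r-v_r\,\bw=\vec{g}_r$, where $\bc_r$ is the unit $r$th column of $\bC$ spanning the JGE value $\scrL_r$ of $\cT$ and $\vec{g}_r\in\K^K$ collects the entries $(\cG(:,:,\ell)\bu)_r$. Applying $\bP_{\scrW^\perp}$ annihilates the $\bw$ term, so $u_r\,\bP_{\scrW^\perp}\bc_r=\bP_{\scrW^\perp}\vec{g}_r$, hence $|u_r|\,\chi(\scrL_r,\scrW)=\|\bP_{\scrW^\perp}\vec{g}_r\|_2$ by \eqref{eq:ChordalEqs}. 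A short computation identifies the vectors $\bP_{\scrW^\perp}\vec{g}_r$, $r=1,\dots,R$, with the rows of the matrix $\bM\in\K^{R\times K}$ obtained from $\cH\cdot_2\bu^{\mathrm{T}}$, where $\cH:=\cG\cdot_3\bP_{\scrW^\perp}=\cE\cdot_1\bA^{-1}\cdot_2\bB^{-1}\cdot_3\bP_{\scrW^\perp}$ is exactly the tensor in \eqref{eq:JGEvaluebound}. Squaring and summing over $r$ gives $(\min_r\chi(\scrL_r,\scrW))^2\|\bu\|_2^2\le\sum_r|u_r|^2\chi(\scrL_r,\scrW)^2=\|\bM\|_{\mathrm{F}}^2$; since $\bM$ has rank at most $\min(R,K)\le R$ one has $\|\bM\|_{\mathrm{F}}\le\sqrt{R}\,\|\bM\|_2$, and since $\bM$ is a modal contraction of $\cH$ one has $\|\bM\|_2\le\|\cH\|_{\mathrm{sp}}\|\bu\|_2$. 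Dividing by $\|\bu\|_2>0$ yields the first inequality in \eqref{eq:JGEvaluebound}, and the second follows from submultiplicativity of the tensor spectral norm under modal products, $\|\cE\cdot_1\bU\cdot_2\bV\cdot_3\bW\|_{\mathrm{sp}}\le\|\bU\|_2\|\bV\|_2\|\bW\|_2\|\cE\|_{\mathrm{sp}}$, with $\|\bA^{-1}\|_2=\sigma_{\min}(\bA)^{-1}$, $\|\bB^{-1}\|_2=\sigma_{\min}(\bB)^{-1}$ and $\|\bP_{\scrW^\perp}\|_2\le1$. For \eqref{eq:JGEBauerFike}: if $\cW$ is a slice mix invertible border $\K$-rank $R$ tensor, then Proposition~\ref{proposition:alggreatergeo} shows $p_\cW$ splits into $R$ linear factors and Lemma~\ref{lemma:BorderRankGeo} shows every JGE value of $\cW$ has positive geometric multiplicity, so the bound just proved applies to each of the $R$ JGE values; taking the maximum and dropping the harmless mode-$3$ factor $\bP_{\scrW^\perp}$ gives \eqref{eq:JGEBauerFike}.

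\textbf{The two special cases.} When $K=2$ the projection $\bP_{\scrW^\perp}$ has rank one, so every mode-$3$ fiber of $\cH$ lies on a single line; consequently $\bM=\cH\cdot_2\bu^{\mathrm{T}}$ is a rank-one matrix and $\|\bM\|_{\mathrm{F}}=\|\bM\|_2\le\|\cH\|_{\mathrm{sp}}\|\bu\|_2$, which removes $\sqrt{R}$. In the second case, assume $\bB_\cW=\bB$; by Theorem~\ref{theorem:rankvsmult}\eqref{it:RankIFFBasis} the JGE vectors of $\cW$ are the columns of $\bB_{\cW}^{-\mathrm{T}}=\bB^{-\mathrm{T}}$, so one may take $\bx=\bB^{-\mathrm{T}}\be_s$ for some $s$, whence $\bu=\bB^{\mathrm{T}}\bx=\be_s$ and $\bM=\cH(:,s,:)$; the only row of $\bM$ that survives weighting by $|u_r|$ is the $s$th, and $\|\cH(s,s,:)\|_2\le\|\cH\|_{\mathrm{sp}}$, again dropping $\sqrt{R}$. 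The case $\bA_\cW=\bA$ reduces to this by transposing the first two modes of all tensors, which fixes the spectra of $\cT$ and $\cW$ and leaves $\|\cE\cdot_1\bA^{-1}\cdot_2\bB^{-1}\cdot_3\bP_{\scrW^\perp}\|_{\mathrm{sp}}$ unchanged. In either case \eqref{eq:JGEBauerFikeSpecial} follows from the argument for \eqref{eq:JGEBauerFike} verbatim with $\sqrt{R}$ replaced by $1$.

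\textbf{Main obstacle.} Conceptually the proof is short; the work is the modal-product bookkeeping, above all verifying that the projected residuals $\bP_{\scrW^\perp}\vec{g}_r$ are exactly the rows of $\cE\cdot_1\bA^{-1}\cdot_2\bB^{-1}\cdot_3\bP_{\scrW^\perp}$ contracted with $\bu$ in mode $2$ — this single identification both pins down the tensor appearing in \eqref{eq:JGEvaluebound} and supplies the factor $\sqrt{R}$ via a rank bound on an $R\times K$ matrix — together with a careful justification of $\|\bM\|_2\le\|\cH\|_{\mathrm{sp}}\|\bu\|_2$ and of the submultiplicativity estimate over $\K=\C$, where the tensor spectral norm is built from a multilinear (not sesquilinear) form. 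A minor secondary point is that in \eqref{eq:JGEBauerFike} a slice mix invertible border-rank-$R$ tensor need not have full multilinear rank in the third mode, so the appeal to Lemma~\ref{lemma:BorderRankGeo} should be accompanied by a direct treatment of that degenerate subcase.
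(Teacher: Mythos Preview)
Your proof is correct and follows essentially the same route as the paper: transform to the eigenbasis of $\cT$, project onto $\scrW^\perp$, and extract the factor $\sqrt{R}$ from the resulting coordinate bounds (the paper picks the single coordinate of largest modulus so that $|v_{r_*}|\ge 1/\sqrt{R}$, while you sum squares and use $\|\bM\|_{\mathrm F}\le\sqrt{R}\,\|\bM\|_2$, which is equivalent). Your handling of the $K=2$ case via $\mathrm{rank}(\bM)\le 1\Rightarrow\|\bM\|_{\mathrm F}=\|\bM\|_2$ is slightly cleaner than the paper's deferral to \cite[Chapter~VI, Theorem~2.7]{SS90}, and your flag about Lemma~\ref{lemma:BorderRankGeo}'s mode-$3$ full-rank hypothesis is a fair technical caution that applies equally to the paper's own derivation of \eqref{eq:JGEBauerFike}.
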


In the general case, the proof of Theorem \ref{theorem:TensBauerFike} follows a similar strategy to the proof of \cite[Chapter VI, Theorem 2.7]{SS90} with modifications to address issues arising when $K>2$. In particular, \cite[Chapter VI, Theorem 2.7]{SS90} shows
\[
\min_k \chi (\scrL_k, \scrW)  \leq   \frac{\tnorm{{ \cE_{(1)}}}}{\sigma_{\min} (\bA) \sigma_{\min} (\bB)}
\]
in the $K=2$ case. As the proof in the general case is similar to Stewart and Sun's argument, we give it in the supplementary materials. However, if we additionally assume that $\cW$ has $\K$-rank $R$, then a new, simplified proof which is more intuitive can be given. This we now describe.

\begin{proof}

Assume that $\cW \in \K^{R \times R \times R}$ has rank $R$ and is slice mix invertible and let $[\![ \tbA, \tbB, \tbC ]\!]$ be a CPD of $\cW \cdot_1 \bA^{-1} \cdot_2 \bB^{-1}$ where the columns of $\tbC$ have unit norm.\footnote{Up to permutation and scaling one has $\tbA =\bA^{-1} \bA_{\cW}$ and $\tbB = \bB^{-1} \bB_{\cW}$ and $\tbC=\bC_\cW$.} Additionally set $\cE := \cW-\cT$.  As a consequence of Theorem \ref{theorem:rankvsmult} \eqref{it:RankIFFBasis}, the spans of the columns of $\bC$ and $\tbC$ give the JGE values of $\cT$ and $\cW$, respectively. For each column $\tbc_j$ of $\tbC$ we will show that the spectral norm of $\cE \cdot_1 \bA^{-1} \cdot_2 \bB^{-1} \cdot_3 \bP_\tbcjp$ gives an upper bound for
\[
\min_{r} \chi (\tbc_j,\bc_r) = \min_r \| \bP_{\tbcjp} \bc_r \|_2
\]
from which the result follows. Here $\bP_\tbcjp$ is the orthogonal projection on the orthogonal complement of $\spann(\tbc_j)$. 

Expanding out the spectral norm of the transformed error tensor, we have
\beq
\label{eq:ErrorSpectralProjection}
\spnorm{\cE \cdot_1 \bA^{-1} \cdot_2 \bB^{-1} \cdot_3 \bP_\tbcjp} = \max_{\|\bu\|_2=\|\bv\|_2=\|\bz\|_2=1} \bigg | \bigg(\sum_{r \neq j} \tba_r \otimes \tbb_r \otimes \left( \bP_\tbcjp \tbc_r \right) - \sum_{r=1}^R \be_r \otimes \be_r \otimes \left(\bP_\tbcjp \bc_r\right) \bigg) \cdot_1 \bu \cdot_2 \bv \cdot_3 \bz \bigg |.
\eeq

Note that  since $\bP_\tbcjp \tbc_j = \b0$, the tensor $\tcT \cdot_1 \bA^{-1} \cdot_2 \bB^{-1} \cdot_3 \bP_\tbcjp$ which appears on the left hand side of the difference in equation \eqref{eq:ErrorSpectralProjection} has rank at most $R-1$. It follows that there is some vector $\by \in \K^R$ which is orthogonal to the subspace spanned by the mode$-2$ fibers of this tensor. In particular, let $\by \in \K^R$ be a unit vector such that $\langle \by,\tbb_r \rangle =0$ for all $r \neq j$. With this choice of $\by$ we have
\[
\sum_{r \neq j}\langle \by,\tbb_r \rangle \tba_r  \otimes \left( \bP_\tbcjp \tbc_r \right) = 0
\]
from which it follows that
\beq
\begin{array}{rclcl}
\spnorm{\cE \cdot_1 \bA^{-1} \cdot_2 \bB^{-1} \by^{\mathrm{H}} \cdot_3 \bP_\tbcjp} & = & \max_{\|\bu\|_2=\|\bz\|_2=1} \left |\left(\sum_{r=1}^R \langle \by,\be_r \rangle \be_r  \otimes \left(\bP_\tbcjp \bc_r\right) \right) \cdot_1 \bu \cdot_3 \bz \right| \\
 & = & \spnorm{ \sum_{r=1}^R \langle \by,\be_r \rangle \be_r  \otimes \big(\bP_\tbcjp \bc_r\big) } \\
 & \geq &  \max_{r} \| \langle \by,\be_r \rangle \bP_\tbcjp \bc_r \|_2.
\end{array}
\eeq

Recalling that $\by$ is a unit vector, there must exist at least one index $r_*$ such that $ | \langle \by,\be_r \rangle | \geq 1/\sqrt{R}$, from which we obtain
\[
\spnorm{\cE \cdot_1 \bA^{-1} \cdot_2 \bB^{-1} \by^{\mathrm{H}} \cdot_3 \bP_\tbcjp} \geq\| \langle \by,\be_{r_*} \rangle \bP_\tbcjp \bc_{r_*} \|_2 \geq  \frac{\chi(\tbc_j,\bc_{r_*})}{\sqrt{R}} \geq \frac{\min_r \chi (\tbc_j,\bc_{r})}{\sqrt{R}}.
\]
We conclude 
\[
\sqrt{R} \tnorm{\bA^{-1}} \tnorm{\bB^{-1}} \spnorm{\cE}\geq \sqrt{R} \spnorm{\cE \cdot_1 \bA^{-1} \cdot_2 \bB^{-1} \by^{\mathrm{H}} \cdot_3 \bP_\tbcjp} \geq \min_r \chi (\tbc_j,\bc_{r}).
\]
The result in this special case follows from combining the above inequality with Theorem \ref{theorem:rankvsmult} \eqref{it:RankIFFBasis}. If additionally one has $\bB_\cW = \bB$, then one can take $\tbB = \bI$. The fact that the coefficient $\sqrt{R}$ may be dropped in this case easily follows. The case $\bA_\cW =\bA$ is similar.
\end{proof}

\subsubsection{Scaling of $\bA$ and $\bB$}

The quality of the bound obtained in Theorem \ref{theorem:TensBauerFike} depends on the scaling of the factor matrices $\bA$ and $\bB$ in the CPD $\cT=[\![\bA,\bB,\bC]\!]$. More precisely, for any diagonal matrix $\bD$, one has that $\cT=[\![\bA\bD,\bB\bD^{-1},\bC]\!]$ is a CPD of $\cT$, and the quantity of $\sigma_{\min}(\bA \bD) \sigma_{\min} (\bB \bD^{-1})$ is dependent on the choice of $\bD$. To obtain an optimal bound, one wants to minimize the right hand side of equation \eqref{eq:JGEBauerFike}, hence one wants to maximize $\sigma_{\min}(\bA \bD) \sigma_{\min} (\bB \bD^{-1})$ over invertible diagonal matrices $\bD$. 

One strategy to compute the optimal $\bD$ is to use an ALS approach in which one sets all but one  of the diagonal entries of $\bD$ to be equal to $1$, then maximizes over the remaining diagonal entry of $\bD$. By repeating this approach while varying which entry of $\bD$ is not fixed, one is able to obtain a (local) optimum for equation \eqref{eq:JGEBauerFike}.

As an alternative strategy, one may simply normalize $\bA$ and $\bB$ so that the $r$th column of $\bA$ and $\bB$ have the same norm for each $r=1,\dots,R$. Although this method cannot be expected to be optimal, it appears to perform well for randomly generated $\bA$ and $\bB$, and the associated computational cost is minimal.

\subsubsection{On the coefficient $\sqrt{R}$ }

Knowing that there are various special cases in which the coefficient $\sqrt{R}$ may be dropped from equation \eqref{eq:JGEBauerFike}, it is natural to wonder if there are examples where this coefficient is necessary, or if it is simply a relic of the proof. The following example shows that some constant is indeed necessary. That is, equation \eqref{eq:JGEBauerFike} does not hold for arbitrary pairs of slice mix invertible $\K$-rank $R$ tensors if the coefficient $\sqrt{R}$ is removed.

\begin{exa}
Let $\cT$ have CPD $\cpd$ where
\[
\bA = \bB = \bI_3 \mathand \bC = \begin{pmatrix} 1 & 0 & \frac{\sqrt{2}}{2} \\ 0 & 1 & \frac{\sqrt{2}}{2} \\ 0 & 0 & 0 \end{pmatrix}
\]
and let $\cW$ have CPD $[\![\tbA,\tbB,\tbC]\!]$ where
\[
\tbA =\tbB  = \begin{pmatrix} \frac{1}{10\sqrt{3}} & \frac{1}{\sqrt{3}} & 0 \\ \frac{1}{10\sqrt{3}} & -\frac{1}{2\sqrt{3}} & -\frac{1}{\sqrt{2}} \\ \frac{1}{10\sqrt{3}} & -\frac{1}{2\sqrt{3}} & \frac{1}{\sqrt{2}} 
\end{pmatrix} \mathand \tbC = \begin{pmatrix} 0 & 1 & 0 \\ 0 & 0 & 1 \\ 1 & 0 & 0 \end{pmatrix}
\]
Then $\cT$ and $\cW$ are both slice mix invertible $\R$-rank $3$ tensors and the spectral variation $\sv[\cT,\cW]$ is equal to $1$, as seen from the fact that the first column of $\tbC$ is orthogonal to all three columns of $\bC$. However, setting $\cE=\cW-\cT$ one has
\[
\Big\| \cE\cdot_1 \bA^{-1} \cdot_2 \bB^{-1} \Big\|_{\mathrm{sp}} < 1
\]
illustrating that equation \eqref{eq:JGEBauerFike} does not hold for this pair\footnote{Computation of the spectral norm is NP-hard \cite{HL13}; however, the mode three unfolding $\cE\cdot_1 \bA^{-1} \cdot_2 \bB^{-1}$ has spectral norm $\approx 0.983$ and this norm is as an upper bound for the tensor spectral norm. Using Tensorlab \cite{VDSBL16} suggests $\Big\| \cE\cdot_1 \bA^{-1} \cdot_2 \bB^{-1} \Big\|_{\mathrm{sp}} \approx 0.882$.}.
\end{exa}

\subsection{Matching distance bounds and the $K=2$ case}

While the spectral variation bound from Theorem \ref{theorem:TensBauerFike} provides insight as to how a CPD is affected by a perturbation, as illustrated by Example \ref{exa:SVdefective}, a spectral variation bound cannot be used to determine that a perturbation of a given tensor has distinct JGE values. To accomplish this we need to use a matching distance bound such as Theorem \ref{theorem:TensMDBound}.

As briefly mentioned before, a spectral variation bound can often be converted to a matching distance bound using the following idea:  Supposing temporarily that $\cW$ is slice mix invertible and has border $\K$-rank $R$ and letting $\{\scrL_r\}_{r=1}^R$ be the  spectrum of $\cT$, the bound given in Theorem \ref{theorem:TensBauerFike} gives a collection of neighborhoods $\{\mathfrak{N}_\epsilon (\scrL_r)\}_{r=1}^R$ in which the JGE values of $\cW$ must lie. Additionally, it is not difficult to show that JGE values are continuous in a perturbation along the set of slice mix invertible border rank $R$ tensors.\footnote{For example, one may use the fact that the characteristic polynomial of a slice mix invertible border $\K$-rank $R$ tensor factors as a product of $R$ linear terms to show this.}

More precisely, if $\mathfrak{T}:[0,1]\to \K^{R \times R \times K} $ is a continuous function such that $\mathfrak{T} (0) = \cT$ and $\mathfrak{T}(t)$ has border $\K$-rank $R$ and is slice mix invertible for all $t \in [0,1]$, then there exists a continuous function $\mathfrak{L}:[0,1] \to [\text{Gr}(1,\K^K)]^R$ such that $\mathfrak{L}(t)$ is equal to the  spectrum of $\mathfrak{T}(t)$ for all $t \in [0,1]$. Here $\text{Gr}(1,\K^K)$ denotes the set of one-dimensional subspaces of $\K^K$ and $[\text{Gr}(1,\K^K)]^R$ denotes a collection of $R$ elements of $\text{Gr}(1,\K^K)$ with repetition allowed.\footnote{The collection of one-dimensional subspaces of $\K^K$ is a metric space under the chordal metric. In general, the collection of $d$-dimensional subpsaces of $\K^K$ is called a Grassmanian.}

The above discussion implies that if $\cW$ is slice mix invertible and has border $\K$-rank $R$ and if there is a path of slice mix invertible border $\K$-rank $R$ tensors from $\cT$ to $\cW$ lying in the ball of radius $\spnorm{\cT-\cW}$ (i.e. if $\mathfrak{T}$ can be chosen so $\mathfrak{T}(0)=\cT$ and $\mathfrak{T}(1) = \cW$ and $\spnorm{\cT-\mathfrak{T}(t)} \leq  \spnorm{\cT-\cW}$ for all $t \in [0,1]$) and if the $\mathfrak{N}_\epsilon (\scrL_i) \cap \mathfrak{N}_\epsilon (\scrL_j)=\emptyset$ for each $i \neq j$, then each $\mathfrak{N}_\epsilon (\scrL_r)$ must contain exactly one JGE value of $\cW$. As a consequence, if these assumptions hold, then the bound given by \ref{theorem:TensBauerFike} is in fact a matching distance bound and may be used to guarantee the existence of distinct JGE values for $\cW$. See Figure \ref{fig:EigPertFig} for an illustration of the functions $\mathfrak{T}$ and $\mathfrak{L}$ and of the neighborhoods $\NepsL{r}$.

\begin{figure}
\label{fig:EigPertFig}
\hspace*{-1cm}
	\scalebox{.8}{
\begin{tikzpicture}

\path[name path=border1] (-3.75,.25) to[out=-10,in=150] (-1.5,-0.5);
\path[name path=border2] (12,1) to[out=150,in=-10] (5.5,3.2);
\path[name path=border3] (1,1) to[out=-10,in=150] (5.5,3.2);
\path[name path=border4] (0.25,1.25) to[out=150,in=-10] (-2.25,1.6);
\path[name path=redline] (-6.5,-1) -- (5.5,3.65);
\path[name intersections={of=border1 and redline,by={a}}];
\path[name intersections={of=border4 and redline,by={b}}];

\shade[left color=gray!10,right color=gray!80] 
  (-4.25,-.25) to[out=-30,in=130] (-.75,-1.25) -- (1,1.75) to[out=130,in=-30] (-2.5,2.75) -- cycle;

\path[name path=border5] (5.75,.5) to[out=-0,in=150] (7.5,-0.5);
\path[name path=border6] (6.5,.5) to[out=-10,in=-50] (10.25,1/2);
\path[name path=border7] (9.75,1.55) to[out=00,in=-10] (7.25,1.9);
\path[name path=border8] (6.5,1.55) to[out=-100,in=-10] (9.5,1.5);
\path[name path=lambda1line] (3.5,-1) -- (11,4);
\path[name path=lambda2line] (4.50,-.92) -- (14.5,3.65);
\path[name intersections={of=border5 and lambda1line,by={c}}];
\path[name intersections={of=border7 and lambda1line,by={d}}];
\path[name intersections={of=border6 and lambda2line,by={e}}];
\path[name intersections={of=border8 and lambda2line,by={f}}];

  \draw[violet,line width=1.5pt,shorten >= 3pt,shorten <= 0pt] 
  (e) .. controls (9,-1) and (8,1) ..
  coordinate[pos=.2] (eux1) 
  coordinate[pos=0.45] (eux2) 
  coordinate[pos=.65] (eux3) (f); 
  
\draw[blue,line width=1.5pt,shorten >= 3pt,shorten <= 3pt] 
  (c) .. controls (7,1) and (5,2) ..
  coordinate[pos=0.4] (dux1) 
  coordinate[pos=0.6] (dux2) 
  coordinate[pos=0.8] (dux3) (d);

\shade[left color=gray!10,right color=gray!80] 
 (3.75,-.75) to[out=10,in=150] (8.75,-1.75) -- (10.5,1.55) to[out=150,in=10] (5.5,2.55) -- cycle;
  

\draw[red,line width=1.5pt,shorten >= 3pt,shorten <= 3pt] 
  (a) .. controls (0,-1) and (-4,2) ..
  coordinate[pos=0] (cux1) 
  coordinate[pos=0.5] (cux2) 
  coordinate[pos=1] (cux3) (b);

  
  \draw[color=violet!50,line width=.75pt] (eux1) circle (.924);
  
   \draw[color=blue!50,line width=.75pt] (dux1) circle (.9);

\draw[line width=1.1pt] (1.4,-0.8) to
  coordinate[pos=0] (aux1) 
  coordinate[pos=0.5] (aux2) 
  coordinate[pos=1] (aux3) (3.5,2.3);

\node [xshift=10pt,yshift=-5pt] at (aux1) {$0$};
\node [xshift=10pt,yshift=-5pt] at (aux2) {$\frac{1}{2}$};
\node [xshift=10pt,yshift=-5pt] at (aux3) {$1$};

\node[name=tele] [xshift=0pt,yshift=35pt] at (aux3) {$t\in [0,1]$};
\node[name=lele] [xshift=130 pt,yshift=35pt] at (aux3) {$\mathfrak{L}(t)$};
\node[name=tauele] [xshift=-130 pt,yshift=35pt] at (aux3) {$\mathfrak{T}(t)$};

\draw[->,dashed] (tele) edge[bend left] (lele);
\draw[->,dashed] (tele) edge[bend left] (tauele);
    
\node [xshift=65pt,yshift=15pt] at (tele) {$\mathfrak{L}$};
\node [xshift=-65pt,yshift=-15pt] at (tele) {$\mathfrak{T}$};

\node [xshift=-7pt,yshift=5pt,red] at (cux1) {$\mathcal{T}$};
\node [xshift=-15pt,yshift=5pt,red] at (cux2) {$\mathfrak{T}(\frac{1}{2})$};
\node [xshift=-2pt,yshift=10pt,red] at (cux3) {$\hat{\mathcal{T}}$};

\node [xshift=9pt,yshift=-4pt,blue] at (dux1) {$\mathscr{L}_1$};
\node [xshift=-30pt,yshift=13pt,blue] at (dux2) {$\mathfrak{L}(\frac{1}{2})$};
\node [xshift=11pt,yshift=9pt,blue] at (dux3) {$\hat{\mathscr{L}}_1$};

\node [xshift=-7pt,yshift=-7pt,violet] at (eux1) {$\mathscr{L}_2$};
\node [xshift=6pt,yshift=-8.5pt,violet] at (eux2) {$\mathfrak{L}(\frac{1}{2})$};
\node [xshift=6pt,yshift=11pt,violet] at (eux3) {$\hat{\mathscr{L}}_2$};

\node [name=neps1,xshift=-40pt,yshift=-45pt,blue] at (dux1) {$\mathfrak{N}_\epsilon(\mathscr{L}_1)$};
\node [name=neps2,xshift=48pt,yshift=40pt,violet] at (eux1) {$\mathfrak{N}_\epsilon(\mathscr{L}_2)$};


\draw[->,dashed,shorten >=0.95cm,blue!80] (neps1)to (dux1);
\draw[->,dashed,shorten >=0.95cm,violet!80] (neps2)to (eux1);

\node at (-2,-2) {Set of border $\mathbb{K}$-rank $R$ tensors};

\node at (6.8,-2) {$\mathrm{Gr}(1,\mathbb{K}^K)$};

  \draw[violet,line width=1.5pt] (eux1)to[bend left] (eux2);
  \draw[violet,line width=1.5pt] (eux2)to[bend right] (eux3);
  
  \draw[blue,line width=1.5pt] (dux1)to[bend left] (dux2);
  \draw[blue,line width=1.5pt] (dux2)to[bend left] (dux3);


  
  \draw[->,dashed,shorten >=0.15cm,blue] (aux1)to[bend left] (dux1);
  \draw[->,dashed,shorten >=0.15cm,blue] (aux2)to[bend left] (dux2);
  \draw[->,dashed,shorten >=0.15cm,blue] (aux3)to[bend left=20] (dux3);
  

\foreach \coor in {1,2,3}
  \draw[->, dashed,shorten >=0.15cm,red] (aux\coor)to[bend left] (cux\coor);

  
  \draw[->,dashed,shorten >=0.15cm,violet] (aux1)to[bend left=35] (eux1);
  \draw[->,dashed,shorten >=0.15cm,violet] (aux2)to[bend left=19] (eux2);
  \draw[->,dashed,shorten >=0.15cm,violet] (aux3)to[bend left=60] (eux3);

\foreach \coor in {1,2,3}
  \draw[fill=black] (aux\coor) circle (3pt);

\foreach \coor in {1,2,3}
  \draw[fill=red] (cux\coor) circle (3pt);
  
\foreach \coor in {1,2,3}
  \draw[fill=blue] (dux\coor) circle (3pt);
  
  \draw[fill=violet] (eux1) circle (3pt);
  \draw[fill=violet] (eux2) circle (3pt);
  \draw[fill=violet] (eux3) circle (3pt);

\end{tikzpicture}}
\caption{Illustration of continuous functions $\mathfrak{T}$ and $\mathfrak{L}$ defined on $[0,1]$. The function $\mathfrak{T}$ maps to a path of border $\K$-rank $R$ tensors from $\cT$ to $\hcT$. The function $\mathfrak{L}$ is defined so that $\mathfrak{L}(t)$ is equal to the joint generalized spectrum of $\mathfrak{T}(t)$ for all $t \in [0,1]$.  All JGE values of $\hat{\cT}$ are contained in the union of the regions $\NepsL{1}$ and $\NepsL{2}$. Furthermore the neighborhood $\NepsL{r}$ contains a JGE value of $\hat{\cT}$ for each $r=1,2$.}
\end{figure}

The primary issue with using such an argument for general $K$ is that, it is not known if there is a continuous function $\mathfrak{T}$ which satisfies the necessary assumptions. That is, it is not known if there is a path of slice mix invertible border rank $R$ tensors from $\cT$ to $\cW$ which lies in the ball of radius $\spnorm{\cT-\cW}$ centered at $\cT$. However, in the case that $K=2$, the condition needed to guarantee that the $\mathfrak{N}_\epsilon (\scrL_r)$ do not intersect in fact guarantees the existence of such a path.

As discussed in \cite[Chapter VI]{SS90}, if $\cT$ is a slice mix invertible tensor of size $R \times R \times 2$, then the (joint) generalized eigenvalue problem for $\cT$ is equivalent to a classical eigenvalue problem. E.g. if $\bT_1$ is invertible, then the JGE eigenvectors of $\cT$ are exactly equal to the classical eigenvectors of $\bT_1^{-1} \bT_2$. Using this equivalence, we for free obtain basic facts about multiplicities of JGE values for slice mix invertible $R \times R \times 2$ tensors. Namely, a slice mix invertible $R \times R \times 2$ tensor always has $R$ JGE values in $\C^2$ counting algebraic multiplicity, and each JGE value of such a tensor must have geometric multiplicity at least one. We will make frequent use of these facts in the upcoming results. 

\begin{proposition}
\label{proposition:RegularityBound}
Let $\cT \in \K^{R \times R \times 2}$ be a tensor of $\K$-rank $R$ with CPD $\cT=\cpd$, where $\bC$ has columns of unit norm and let $\{\scrL_r\}_{r=1}^R$ be the set of JGE values of $\cT$. Let $\cE \in \K^{R \times R \times 2}$ be any tensor which satisfies
\[
\spnorm{\cE} \leq \frac{\sigma_{\min} (\bA) \sigma_{\min} (\bB) \min_{i \neq j} \chi (\scrL_i, \scrL_j)}{2}.
\]
Then $\cT+\cE$ is slice mix invertible and has border $\C$-rank equal to $R$. 
\end{proposition}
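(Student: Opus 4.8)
The plan is to exhibit a single unit vector $\bgam=(\gamma_1,\gamma_2)\in\K^2$ for which the corresponding linear combination of frontal slices of $\cT+\cE$ is invertible. This at once shows that $\cT+\cE$ is slice mix invertible, and I would then obtain the border rank statement from a general limiting argument. Write $\cT=\cpd$ as in the hypothesis; by Lemma~\ref{lemma:invertslice} the matrices $\bA,\bB$ are invertible, $\bT_k=\bA D_k(\bC)\bB^{\mathrm{T}}$, and the JGE values of $\cT$ are $\scrL_r=\spann(\bC(:,r))$. For a unit $\bgam$ one has
\[
(\cT+\cE)\cdot_3\bgam=\bA\,D_\bgam\,\bB^{\mathrm{T}}+\cE\cdot_3\bgam,
\]
where $D_\bgam$ denotes the diagonal matrix whose $r$th diagonal entry is $\gamma_1 C_{1r}+\gamma_2 C_{2r}$. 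Since $\sigma_{\min}(\bA D_\bgam\bB^{\mathrm{T}})\ge\sigma_{\min}(\bA)\sigma_{\min}(\bB)\min_r|\gamma_1 C_{1r}+\gamma_2 C_{2r}|$ and $\|\cE\cdot_3\bgam\|_2\le\spnorm{\cE}$ for unit $\bgam$, a singular value perturbation estimate shows this combination is invertible as soon as
\[
\min_r|\gamma_1 C_{1r}+\gamma_2 C_{2r}|>\frac{\spnorm{\cE}}{\sigma_{\min}(\bA)\sigma_{\min}(\bB)} .
\]
By hypothesis the right-hand side is at most $\frac{1}{2}\min_{i\ne j}\chi(\scrL_i,\scrL_j)=:\delta/2$, so it suffices to find a unit $\bgam$ with $\min_r|\gamma_1 C_{1r}+\gamma_2 C_{2r}|>\delta/2$.

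Using \eqref{eq:ChordalEqs} one checks $|\gamma_1 C_{1r}+\gamma_2 C_{2r}|=\chi\bigl(\spann(\bar\bgam),\scrL_r^{\perp}\bigr)$, and $\mathscr{N}\mapsto\mathscr{N}^{\perp}$ is a $\chi$-isometry of $\mathrm{Gr}(1,\K^2)$; hence the claim reduces to the following geometric statement: if $\mathscr{M}_1,\dots,\mathscr{M}_R$ are one-dimensional subspaces of $\K^2$ with pairwise chordal distances at least $\delta$, then some $\mathscr{M}\in\mathrm{Gr}(1,\K^2)$ satisfies $\chi(\mathscr{M},\mathscr{M}_r)>\delta/2$ for all $r$. (If $\delta=0$, i.e.\ two of the $\scrL_r$ coincide, then $\cE=0$ and $\cT+\cE=\cT$ is slice mix invertible by Lemma~\ref{lemma:invertslice}, so assume $\delta>0$, the $\scrL_r$ distinct, and $R\ge2$.) The non-strict version — an $\mathscr{M}$ with $\chi(\mathscr{M},\mathscr{M}_r)\ge\delta/2$ — is immediate from compactness and connectedness of $\mathrm{Gr}(1,\K^2)$: if $\nu:=\max_{\mathscr{M}}\min_r\chi(\mathscr{M},\mathscr{M}_r)<\delta/2$, then the closed balls $\bar B(\mathscr{M}_r,\nu)$ are pairwise disjoint (distances $\ge\delta>2\nu$) and cover the connected space $\mathrm{Gr}(1,\K^2)$, which is impossible. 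This non-strict version already suffices whenever the inequality in the hypothesis is strict, in particular for the application of this proposition in the proof of Theorem~\ref{theorem:TensMDBound}.

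The step I expect to be the real obstacle is ruling out the borderline case $\nu=\delta/2$. Over $\R$, where $\mathrm{Gr}(1,\R^2)$ is a circle, I would take $\mathscr{M}$ to be the midpoint of the longest arc cut out by the $\mathscr{M}_r$; a direct computation, using the strict bound $\sin t<2\sin(t/2)$ for $t>0$ (together with $\delta\le1$), shows the resulting minimal chordal distance strictly exceeds $\delta/2$. Over $\C$, where $\mathrm{Gr}(1,\C^2)$ is isometric to a Euclidean sphere of chordal diameter $1$, I would argue by contradiction: if the open balls $B(\mathscr{M}_r,\delta/2)$ are pairwise disjoint and the closed balls $\bar B(\mathscr{M}_r,\delta/2)$ cover the sphere, then the boundary circle of $B(\mathscr{M}_1,\delta/2)$ — which meets no open ball — must be covered by the other closed balls, and at any of its points one obtains an equality in the triangle inequality for three distinct points of the sphere, which is impossible since the chord joining two distinct points of a sphere meets the sphere only at its endpoints.

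Finally, for the border rank statement it suffices to show that every slice mix invertible $\cS\in\K^{R\times R\times2}$ has border $\C$-rank exactly $R$; applying this to $\cS=\cT+\cE$ then finishes the proof. After a mode-$3$ multiplication by an invertible $2\times2$ matrix, which preserves border rank, I may assume $\bS_1$ is invertible; put $\bN=\bS_1^{-1}\bS_2$ and choose diagonalizable matrices $\bN_n\to\bN$ over $\C$. For each $n$ the tensor with frontal slices $(\bS_1,\bS_1\bN_n)$ has $\C$-rank at most $R$: writing $\bN_n=\bP_n\bLam_n\bP_n^{-1}$ it equals $[\![\bS_1\bP_n,\bP_n^{-\mathrm{T}},\bGam_n]\!]$, where $\bGam_n\in\C^{2\times R}$ has first row $(1,\dots,1)$ and hence nonzero columns. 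Letting $n\to\infty$ gives $\C\text{-}\underline{\rank}(\cS)\le R$, while invertibility of $\bS_1$ forces $R_1(\cS)=R$, so $\C\text{-}\underline{\rank}(\cS)\ge R$ by \eqref{eq:BRankVsMLRank}, and equality follows.
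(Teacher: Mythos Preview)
Your proof is correct and follows the same overall strategy as the paper: pick a unit direction $\bgam\in\K^2$ for which $\min_r|\gamma_1 C_{1r}+\gamma_2 C_{2r}|$ is at least half the minimal chordal gap, then use a singular value perturbation estimate. The two arguments differ in how this direction is produced and in how the border rank conclusion is drawn.

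For the geometric step, the paper's construction is considerably simpler than your compactness/connectedness argument. With $\delta=\frac{1}{2}\min_{i\neq j}\chi(\scrL_i,\scrL_j)$, pick any one-dimensional $\scrZ\subset\K^2$ with $\chi(\scrL_1,\scrZ)=\delta$. The triangle inequality forces $\chi(\scrL_r,\scrZ)\ge\delta$ for every $r$ (otherwise $\chi(\scrL_1,\scrL_r)<2\delta$). Taking $\bq$ to span $\scrZ^\perp$, one has $|\langle\bc_r,\bq\rangle|=\cos\angle(\scrL_r,\scrZ^\perp)=\sin\angle(\scrL_r,\scrZ)=\chi(\scrL_r,\scrZ)\ge\delta$, which is exactly what is needed. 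This replaces your two-case analysis on $\mathrm{Gr}(1,\K^2)$ with a one-line triangle inequality. (Like your non-strict version, it yields $\ge\delta$ rather than $>\delta$, so strictly speaking both proofs handle the hypothesis with strict inequality; this is all that is used downstream in Theorem~\ref{theorem:TensMDBound}. Your extra work for the borderline $\spnorm{\cE}=\sigma_{\min}(\bA)\sigma_{\min}(\bB)\delta$ case is sound but not needed there.)

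For the border rank conclusion, the paper simply cites the classical fact that every $R\times R\times 2$ tensor has border rank at most $R$ (e.g.\ \cite[Theorem~5.5.1.1]{Lan12}), combined with $R_1(\cT+\cE)=R$ from slice mix invertibility and \eqref{eq:BRankVsMLRank}. Your self-contained argument via diagonalizable approximation of $\bS_1^{-1}\bS_2$ is a valid alternative proof of that same fact and has the merit of being elementary, at the cost of a few more lines.
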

\begin{proof}
To simplify notation, set
\[
\delta = \min_{i \neq j} \frac{\chi (\scrL_i,\scrL_j)}{2}.
\]
By assumption the columns of $\bC$ all have unit norm, therefore, by reindexing columns if necessary, Theorem \ref{theorem:rankvsmult} shows that for each $r=1,\dots,R$ the column $\bc_r$ satisfies $\scrL_r=\spann(\bc_r)$. To show $\cT+\cE$ is slice mix invertible, we first show that there is a unit vector $\bq\in \K^2$ such that $|\langle \bc_r, \bq \rangle| \geq \delta$ for all $r$. Using this we will show that the matrix $\bar{q}_1 (\bT_1+\bE_1)+ \bar{q}_2 (\bT_2+\bE_2)$ is invertible, hence $\cT+\cE$ is slice mix invertible.

To this end let $\scrZ \subset \K^2$ be a one-dimensional subspace of $\K^2$ such that $\chi(\scrL_1,\scrZ)=\delta$. Then using the triangle inequality we must have
\beq
\label{eq:PsiChordalGap}
\chi(\scrL_r,\scrZ) \geq \delta \qquad \mathrm{for \ all \ } r=1,\dots, R,
\eeq
as otherwise would imply that there is an index $\ell \neq 1$ such that
\[
\chi(\scrL_\ell,\scrZ)<\delta \qquad \mathrm{hence} \qquad \chi(\scrL_1,\scrL_\ell) \leq \chi(\scrL_1,\scrZ)+\chi(\scrL_\ell,\scrZ)< 2\delta= \min_{i \neq j} \chi (\scrL_i,\scrL_j),
\]
which is a contradiction.

Now set $\scrQ=\scrZ^\perp \subset \K^2$ and let $\mathbf{q} \in \K^2$ be a unit vector such that $\scrQ = \spann(\mathbf{q})$. Using equation \eqref{eq:PsiChordalGap}, for each $r$ we have
\[
\delta \leq \chi(\scrL_\ell,\scrZ)= \sin (\angle (\scrL_r,\scrZ))=\sin(\pi/2 -\angle (\scrL_r,\scrQ))= \cos (\angle (\scrL_r,\scrQ))=|\langle \bc_r, \mathbf{q} \rangle|
\]
where $\angle(\cdot,\cdot)$ denotes the angle between two one-dimensional subspaces of $\K^2$. 

Letting $D_k (\bC)  \in \K^{R \times R}$ be the diagonal matrix whose diagonal entries are given by the $k$th row of $\bC$ as usual, we have that $\bar{q}_1 D_1 (\bC) +\bar{q}_2 D_2 (\bC)$ is a diagonal matrix with $r$th diagonal entry equal to $\langle \bc_r, \mathbf{q} \rangle$. If follows that 
\beq
\label{eq:OmegaSingVal}
\sigma_{\min} (\bar{q}_1 D_1 (\bC) +\bar{q}_2 D_2 (\bC))= \min (\{|\langle \bc_r, \mathbf{q} \rangle|\}_{r=1}^R) \geq \delta.
\eeq
Finally observe that
\[
\begin{array}{rclcl}
\sigma_{\min} \big(\bar{q}_1 (\bT_1+\bE_1)+ \bar{q}_2 (\bT_2+\bE_2)\big) &\geq& \sigma_{\min} (\bar{q}_1 \bT_1 +\bar{q}_2 \bT_2) - \|\bar{q}_1 \bE_1 +\bar{q}_2 \bE_2\|_2 \\
 &\geq & \sigma_{\min} \Big(\bA \Big(\bar{q}_1 D_1 \big(\bC)+\bar{ q}_2 D_2 (\bC)\Big) \bB^\mathrm{T}\Big)-\spnorm{\cE} \\
 & \geq & \sigma_{\min} (\bA) \sigma_{\min} (\bB) \sigma_{\min}\big(\bar{q}_1 D_1 \big(\bC)+\bar{q}_2 D_2 (\bC)\big)- \spnorm{\cE}\\ 
\end{array}
\]

Using inequality \eqref{eq:OmegaSingVal} together with the assumption that $\spnorm{\cE}< \sigma_{\min} (\bA) \sigma_{\min} (\bB)  \delta$ shows that $\cT+\cE$ is slice mix invertible.

Having shown $\cT+\cE$ is slice mix invertible, it follows that $\cT+\cE$ has multilinear rank $(R,R,\cdot)$. From equation \eqref{eq:BRankVsMLRank} we conclude that the border rank of $\cT+\cE$ is at least $R$. Combining this with the well-known fact that tensors of size $R \times R \times 2$ have border rank less than or equal to $R$, e.g. see \cite[Theorem 5.5.1.1]{Lan12}, shows that $\cT+\cE$ has border rank $R$.
\end{proof}

\subsection{Proof of Theorem \ref{theorem:TensMDBound}}

We now give the proof of Theorem \ref{theorem:TensMDBound}.

\begin{proof}

Before proceeding we note that the proof of the $\K=\R$ case makes use of the result over $\C$, so we first consider the complex case. Now set
\beq
\epsilon=\frac{\spnorm{\cT-\cW}}{\sigma_{\min} (\bA) \sigma_{\min} (\bB)} < \frac{ \min_{i \neq j} \chi (\scrL_i, \scrL_j)}{2}
\eeq
and for each $r=1,\dots,R$ let $\NepsL{r}$ be the collection of one-dimensional subspaces in $\C^2$ defined by
\[
\NepsL{r} = \{ \scrZ \subset \C^2 | \ \dim(\scrZ)=1 \ \mathrm{and} \ \chi(\scrL_r,\scrZ) \leq \epsilon\}.
\]
That is, $\NepsL{r}$ is the ball of chordal radius $\epsilon$ centered at $\scrL_r$. Then our choice of $\epsilon$ together with the triangle inequality gives
\[
\NepsL{i} \cap \NepsL{j} = \emptyset \quad \quad \mathrm{for \ all \ } i \neq j.
\]
To prove inequality \eqref{eq:MatchingDistanceBound}, it is sufficient show that each $\NepsL{r}$ contains exactly one JGE value of $\cW$. 

To this end let $\cE=\cW-\cT$ and for $\alpha \in [0,1]$ set $\cT^\alpha=\cT+\alpha \cE$ so that $\cT=\cT^0$ and $\cW=\cT^1$. Note that Proposition \ref{proposition:RegularityBound} shows that $\cT^\alpha$ is slice mix invertible and has border $\C$-rank $R$ for each $\alpha \in [0,1]$. It follows from Theorem \ref{theorem:rankvsmult} that each $\cT^{\alpha}$ has $R$ JGE values in $\C^2$ counting algebraic multiplicity. 

Now define
\[
\NepsLal{r}= \{ \scrZ \subset \C^2 | \ \dim(\scrZ)=1 \ \mathrm{and} \ \chi(\scrL_r,\scrZ) \leq \alpha \epsilon\}.
\]
Then $\NepsLal{r} \subset \NepsL{r}$ for each $\alpha \in [0,1]$ and $r$ so $\NepsLal{i} \cap \NepsLal{j} =\emptyset$ whenever $i \neq j$. Furthermore, using Theorem \ref{theorem:TensBauerFike} we have 
\[
\sv[\cT,\cT^\alpha] \leq \frac{\spnorm{\alpha \cE}}{\sigma_{\min} (\bA) \sigma_{\min} (\bB)}  =\alpha \epsilon.
\]
It follows that the spectrum of $\cT^\alpha$ is contained in $\cup_{r=1}^R \NepsLal{r}$ for each $\alpha \in [0,1]$. 

In particular $\NepsLzero{r}=\{\scrL_r\}$ contains exactly one JGE value of $\cT^0=\cT$ for each $r=1,\dots, R$. Since $\cT^\alpha$ is slice mix invertible for all $\alpha \in [0,1]$, \cite[Chapter VI, Theorem 2.1]{SS90} shows that the JGE values of $\cT^\alpha$ are continuous in $\alpha$ for $\alpha \in [0,1]$. Therefore, because the $\NepsLal{r}$ remain disjoint as $\alpha$ increases from $0$ to $1$, the continuity of the JGE values of $\cT^\alpha$ in $\alpha$ implies that each $\NepsLal{r}$ must contain exactly one JGE value of $\cT^\alpha$ for each $\alpha \in [0,1]$ and for each $r=1,\dots,R$. In particular, $\NepsLone{r}=\NepsL{r}$ contains exactly one eigenvalue of $\cT^1=\cW$ for each $r$. By definition, each $\NepsL{r}$ is a ball of chordal radius $\epsilon$ centered $\scrL_r$, so we conclude
\[
\md[\cT,\cW] \leq \epsilon = \frac{\spnorm{\cT-\cW}}{\sigma_{\min} (\bA) \sigma_{\min} (\bB)},
\]
and that all the JGE value of $\cW$ have algebraic multiplicity one, as claimed.

It remains to show that $\cW$ has $\C$-rank $R$.  As previously discussed, all JGE values of a slice mix invertible $R \times R \times 2$ tensor must have geometric multiplicity at least $1$. It follows that $\cW$ is simple, so Theorem \ref{theorem:rankvsmult} \eqref{it:NonderogatoryImpliesRankR} shows that $\cW$ has $\C$-rank $R$. 

Now assume that $\K=\R$ and that $\cT$ and $\cW$ have real entries. Applying the result over the complex field shows that $\cW$ must have distinct JGE values over $\C$ so it is sufficient to show that the JGE values of $\cW$, hence the JGE vectors of $\cW$, must all be real. Here, when we say that $\cW$ has real JGE values, we mean that the JGE values of $\cT$ can be expressed as the spans of real valued vectors. Equivalently, the classical generalized eigenvalues of the pencil $(\bW_1,\bW_2)$ are real valued.

Suppose towards a contradiction that $\cW$ has a JGE value which is not real and set
\[
\alpha_*= \inf_{\alpha \in [0,1]} \cT+\alpha \cE {\mathrm{\ has \ a \ JGE \ value \ which \ is \ not \ real }}. 
\]
Using the continuity for JGE values in $\alpha$, it is straightforward to show that $\cT+\alpha_0 \cE$ has a JGE value with algebraic multiplicity at least two. However, $\cT+\alpha_0 \cE$ satisfies the assumptions of Theorem \ref{theorem:TensMDBound}, so this is a contradiction as we have already shown that all JGE values of such a tensor must have algebraic multiplicity equal to one. We conclude that the JGE values and JGE vectors of $\cW$ are {all real}. It follows from Theorem \ref{theorem:rankvsmult} \eqref{it:RankIFFBasis} that $\cW$ has $\R$-rank $R$. 
\end{proof}

\section{Deterministic bounds for existence of best rank $R$ decompositions}
\label{sec:Bounds}

We now turn to proving the deterministic bounds for existence of best $\K$-rank $R$ approximations presented in the introduction of the article. While we do not have a matching distance bound in the case of general $K$, a key observation is that if the tensor $\cT$ is defective in the sense of JGE values, then every subpencil of $\cT$ must have a JGE value of algebraic multiplicity greater than one.

\begin{proposition}
\label{proposition:DefectiveSubpencils}
Let $\cT \in \K^{R \times R \times K}$  be a slice mix invertible tensor. Also let $\bU \in \K^{K \times K}$ be any invertible matrix and set $\cS = \cT \cdot_3 \bU$. If $\cT$ has a JGE value of algebraic multiplicity $m$, then for any $i \neq j$ the subpencil
\[
\big(\bS_i,\bS_{{ j}} \big)
\]
must have a JGE value of algebraic multiplicity at least equal to $m$.
\end{proposition}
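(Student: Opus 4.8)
The plan is to follow the characteristic polynomial through the two mode-$3$ operations in the statement: the invertible change of variables $\cT\mapsto\cS=\cT\cdot_3\bU$, and then the restriction to a two-slice subpencil. Fix a unit vector $\blam\in\K^K$ with $\spann(\blam)=\scrL$, so that the hypothesis says $\langle\blam,\overline{\bgam}\rangle^m$ divides $p_\cT(\bgam)=\det\big(\sum_k\gamma_k\bT_k\big)$ in $\K[\gamma_1,\dots,\gamma_K]$. Since $\cT$ is slice mix invertible, $p_\cT$ is a nonzero homogeneous polynomial of degree $R$, so $m$ is a finite integer, and we may assume $m\ge 1$.

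First I would record that, with $\bS_\ell=\sum_k\bU(\ell,k)\bT_k$,
\[
p_\cS(\bgam)=\det\Big(\sum_\ell\gamma_\ell\bS_\ell\Big)=\det\Big(\sum_k(\bU^\mathrm{T}\bgam)_k\,\bT_k\Big)=p_\cT(\bU^\mathrm{T}\bgam),
\]
so $p_\cS$ is the image of $p_\cT$ under the invertible linear substitution $\bgam\mapsto\bU^\mathrm{T}\bgam$; in particular $p_\cS$ is again nonzero (equivalently, $\cS$ is slice mix invertible, having the same span of frontal slices as $\cT$). Substituting $\bU^\mathrm{T}\bgam$ for $\bgam$ in the divisibility relation and using $\langle\blam,\overline{\bU^\mathrm{T}\bgam}\rangle=\langle\bU\blam,\overline{\bgam}\rangle$, we get that $\langle\tblam,\overline{\bgam}\rangle^m$ divides $p_\cS(\bgam)$, where $\tblam:=\bU\blam$ is nonzero. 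Thus $\spann(\tblam)$ is a JGE value of $\cS$ with algebraic multiplicity at least $m$.

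Next I would fix $i\ne j$ and pass to the subpencil by means of the ring homomorphism $\iota\colon\K[\gamma_1,\dots,\gamma_K]\to\K[\gamma_i,\gamma_j]$ that sends $\gamma_\ell\mapsto 0$ for $\ell\notin\{i,j\}$ and fixes $\gamma_i,\gamma_j$. It carries $\det\big(\sum_\ell\gamma_\ell\bS_\ell\big)$ to $\det(\gamma_i\bS_i+\gamma_j\bS_j)=p_{(\bS_i,\bS_j)}(\gamma_i,\gamma_j)$ and carries $\langle\tblam,\overline{\bgam}\rangle$ to $\tilde\lambda_i\gamma_i+\tilde\lambda_j\gamma_j$. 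Writing $p_\cS=\langle\tblam,\overline{\bgam}\rangle^m h$ and applying $\iota$ gives
\[
p_{(\bS_i,\bS_j)}=\big(\tilde\lambda_i\gamma_i+\tilde\lambda_j\gamma_j\big)^m\,\iota(h).
\]
If $p_{(\bS_i,\bS_j)}\equiv 0$, then $(\bS_i,\bS_j)$ fails to be slice mix invertible, and by the convention recorded just after the definition of a defective tensor every one-dimensional subspace of $\K^2$ is a JGE value of infinite algebraic multiplicity, so the conclusion holds trivially. Otherwise $p_{(\bS_i,\bS_j)}\not\equiv 0$ and the displayed factorization forces $(\tilde\lambda_i,\tilde\lambda_j)\ne(0,0)$ — if it vanished the right-hand side would be identically $0$, since $m\ge 1$ — so $\tilde\lambda_i\gamma_i+\tilde\lambda_j\gamma_j$ is a nonzero linear form whose $m$th power divides $p_{(\bS_i,\bS_j)}$, and hence $\spann\big((\tilde\lambda_i,\tilde\lambda_j)\big)\subset\K^2$ is a JGE value of $(\bS_i,\bS_j)$ of algebraic multiplicity at least $m$, as claimed.

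Everything here is formal once the substitution identity $p_\cS(\bgam)=p_\cT(\bU^\mathrm{T}\bgam)$ is written down, and nothing changes for $\K=\R$: the substitution and the restriction $\iota$ keep us in $\R[\,\cdot\,]$, and we never need more than one linear factor of $p_{(\bS_i,\bS_j)}$. The only step requiring genuine care — and the one I would flag as the ``main obstacle'' — is the split in the last paragraph: a restricted pencil may well fail to be slice mix invertible, and one must handle that branch via the infinite-multiplicity convention rather than by producing an explicit linear factor.
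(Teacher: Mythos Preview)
Your argument is correct and follows essentially the same route as the paper: you inline the content of Lemma~\ref{lemma:EigenvalueModes} (the substitution identity $p_{\cS}(\bgam)=p_{\cT}(\bU^{\mathrm T}\bgam)$ and the resulting JGE value $\spann(\bU\blam)$ of $\cS$), then restrict to the coordinates $\gamma_i,\gamma_j$ exactly as the paper does. Your explicit treatment of the degenerate branch $p_{(\bS_i,\bS_j)}\equiv 0$ via the infinite-multiplicity convention is a nice addition that the paper leaves implicit.
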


Before giving the proof we give a lemma which relates the joint generalized eigenvalues and eigenvectors of $\cT$ and $\cT \cdot_3 \bU$. 

\begin{lemma}
\label{lemma:EigenvalueModes}
Let $\cT \in \K^{R \times R \times K}$ be a slice mix invertible tensor and let $\spann(\blam)$ be a JGE value of $\cT$. Also let $\bU \in \K^{K \times K}$ be an invertible matrix and set $\cS = \cT \cdot_3 \bU$. Then $\cS$ is slice mix invertible  and $\spann(\bU \blam)$ is a JGE value of $\cS$ which satisfies $\am(\blam)= \am({\bU} \blam)$ where $\spann(\blam)$ and $\spann( {\bU} \blam)$ are treated as JGE values of $\cT$ and $\cS$, respectively. 

Furthermore, if $\spann(\blam)$ has a corresponding JGE vector $\bx$, then $(\spann({\bU} \blam),\bx)$ is a JGE pair of $\cS$. As a consequence one has $\gm(\blam) = \gm({\bU} \blam)$. 
\end{lemma}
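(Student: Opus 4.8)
The plan is to prove everything directly from the definitions of the mode-$3$ product, the characteristic polynomial, and the two multiplicities. First I would record the elementary identity that, writing $\bS_m = \cS(:,:,m)$, one has $\bS_m = \sum_{\ell=1}^K \bU(m,\ell)\,\bT_\ell$; this is immediate from the definition of $\cdot_3$. Since $\bU$ is invertible, the span of the frontal slices $\{\bS_1,\dots,\bS_K\}$ equals the span of $\{\bT_1,\dots,\bT_K\}$, so the span of the frontal slices of $\cS$ contains an invertible matrix whenever that of $\cT$ does; hence $\cS$ is slice mix invertible (and consequently $p_\cS$ is not identically zero, so both algebraic multiplicities in the statement are finite).

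Next I would dispatch the statement about JGE pairs and geometric multiplicity by a one-line computation. If $(\blam,\bx)$ solves equation \eqref{eq:JGEVdef} with auxiliary vector $\by$, then for each $m$ one gets $\bS_m \bx = \sum_{\ell=1}^K \bU(m,\ell)\,\bT_\ell \bx = \sum_{\ell=1}^K \bU(m,\ell)\,\lambda_\ell \by = (\bU\blam)_m\,\by$, so $(\spann(\bU\blam),\bx)$ is a JGE pair of $\cS$. Thus the space of JGE vectors of $\cT$ attached to $\spann(\blam)$ is contained in the space of JGE vectors of $\cS$ attached to $\spann(\bU\blam)$, giving $\gm(\spann(\blam)) \le \gm(\spann(\bU\blam))$. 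Applying the same observation to $\bU^{-1}$ and the identity $\cT = \cS \cdot_3 \bU^{-1}$ yields the reverse inequality, hence equality of geometric multiplicities.

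It remains to handle the algebraic multiplicities, and this is the only step requiring a little care. Using the identity $\bS_m = \sum_\ell \bU(m,\ell)\bT_\ell$ and multilinearity of the determinant in the rows, I would compute
\[
p_\cS(\bgam) = \det\Bigl(\sum_{m=1}^K \gamma_m \bS_m\Bigr) = \det\Bigl(\sum_{\ell=1}^K (\bU^\mathrm{T}\bgam)_\ell\,\bT_\ell\Bigr) = p_\cT(\bU^\mathrm{T}\bgam).
\]
Likewise, $\langle \bU\blam,\bbgam\rangle = \sum_{k=1}^K (\bU\blam)_k\gamma_k = \sum_{j=1}^K \lambda_j (\bU^\mathrm{T}\bgam)_j$, i.e.\ the linear form $\langle \bU\blam,\bbgam\rangle$ in the indeterminates $\bgam$ is obtained from $\langle\blam,\bbgam\rangle$ by the invertible linear substitution $\bgam \mapsto \bU^\mathrm{T}\bgam$. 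Since divisibility of polynomials is preserved by an invertible linear change of variables, the largest power of the (nonzero) linear form $\langle\bU\blam,\bbgam\rangle$ dividing $p_\cS(\bgam)=p_\cT(\bU^\mathrm{T}\bgam)$ equals the largest power of $\langle\blam,\bbgam\rangle$ dividing $p_\cT(\bgam)$; that is, $\am(\bU\blam) = \am(\blam)$ where the two are computed for $\cS$ and $\cT$ respectively. The main obstacle is purely the bookkeeping in this last change of variables — making sure the linear form and the characteristic polynomial transform compatibly under $\bU^\mathrm{T}$ — after which the conclusion is immediate.
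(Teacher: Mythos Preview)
Your proof is correct and follows essentially the same route as the paper: the key identity $p_{\cS}(\bgam)=p_{\cT}(\bU^{\mathrm T}\bgam)$ together with the compatible transformation of the linear form $\langle\blam,\bbgam\rangle$ under $\bgam\mapsto\bU^{\mathrm T}\bgam$ is exactly how the paper handles the algebraic multiplicity, and the paper dismisses the geometric-multiplicity part as ``straightforward,'' which is precisely the one-line computation you wrote out. Your argument for slice mix invertibility via equality of the spans of frontal slices is a minor variant of the paper's (which deduces it from $p_{\cS}\not\equiv 0$), but the two are equivalent and equally direct.
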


\begin{proof}
Assume $\spann(\blam)$ is a JGE value of $\cT$ with algebraic multiplicity $m$. View $\bgam = (\gamma_1, \dots, \gamma_K)$ as a vector in $\K^K$, and consider the change of variables $\bgam \to \bU^{{ T}}\bgam$. It is a straightforward to verify that
\[
p_{\cT \cdot_3 \bU} (\bgam)= p_{\cT} (\bU^{{ T}} \bgam). 
\]
 Since $\bU$ is invertible, it follows that $p_{\cS} (\bgam) \not\equiv 0$, hence $\cS$ is slice mix invertible. Now set $\bU^{{ T}} \bgam = \eta$. By assumption $p_{\cT} (\eta)=p_{\cT \cdot_3 \bU} (\bgam)$ factors as 
\[
p_{\cT} (\eta)= \langle \blam, \bar{\eta} \rangle ^m g(\eta) = \langle { \bU} \blam, \bar{\bgam} \rangle ^m g(\bU^{{ T}} \bgam),
\]
so $\spann({ \bU \blam})$ is a JGE value of $\cT \cdot_3 \bU$ with algebraic multiplicity $m$.

The proof for geometric multiplicity is straightforward.
\end{proof}

We now give the proof of Proposition \ref{proposition:DefectiveSubpencils}.

\begin{proof}
By assumption $\cT$ is slice mix invertible and has a JGE value of algebraic multiplicity $m$, so Lemma \ref{lemma:EigenvalueModes} shows that $\cS$ is slice mix invertible and has a JGE value with algebraic multiplicity $m$. Say this JGE value is equal to $\spann(\blam)$. It follows that the characteristic polynomial of $\cS$ factors as
\[
p_{\cS}(\bgam)=\langle \blam, \bar{\bgam} \rangle^m g(\bgam)
\]
where $g(\bgam)$ is a nonzero polynomial. 

To complete the proof, assume $i < j$ and observe that the characteristic polynomial of the subpencil $(\bS_i,\bS_{{ j}})$ is given by
\[
\begin{array}{rclcl}
p_{(\bS_i,\bS_{{ j}})}((\gamma_{i},\gamma_{{ j}}))&=&p_{\cS} (0,\dots, 0, \gamma_i,0 \dots, 0, \gamma_{{ j}},0 \dots, 0)\\
&=&(\bar{\gamma}_i \lambda_i +\bar{\gamma}_{{ j}} \lambda_{{ j}})^m g(0,\dots, 0, \gamma_i,0 \dots, 0, \gamma_{{ j}},0 \dots, 0)
\end{array}
\]
hence $\spann((\lambda_i,\lambda_{{ j}}))$ is a generalized eigenvalue of the subpencil $(\bS_i,\bS_{{ j}})$ with multiplicity at least $m$. 
\end{proof}

\subsection{Proof of Theorem \ref{theorem:MultiplePencilBound}}
\label{sec:MultiplePencilBoundProof}
We are now in position to prove our multiple pencil based bound for the existence of a best $\K$-rank $R$ approximation.

\begin{proof} Let $\cM' \in \K^{I_1 \times I_2 \times I_3}$ be any tensor such that 
\[
\|\cT'-\cM'\|_\mathrm{F} {\  < \ } \epsilon/2
\]
and let $\hcT'$ be a best border $\K$-rank $R$ approximation of $\cM'$ in the Frobenius norm. Since $\cT'$ has rank $R$ by assumption, we must have
\[
\|\hcT'-\cM'\|_\mathrm{F} \leq \|\cT'-\cM'\|_\mathrm{F} \qquad \mathrm{hence} \qquad \|\hcT'-\cT'\|_\mathrm{F}  {\  < \ } \epsilon.
\]

Since $\hcT'$ has border $\K$-rank $R$, equation \eqref{eq:BRankVsMLRank} shows that the multilinear rank of $\hcT'$ must be $(R,R,\hat{K})$ for some $\hat{K} \leq R$. It is easy to check that a tensor has $\K$-rank $R$ if and only if an orthogonal compression of the tensor has $\K$-rank $R$, so using Theorem \ref{theorem:TensorProcrustes}, we may WLOG assume that $\cT'=\cT$ and $\hcT'=\hcT$ both have size $R \times R \times \max\{K,\hat{K}\}$. There is  little difference in the $K \geq \hat{K}$ and $K< \hat{K}$ cases, so for ease of notation we assume $K \geq \hat{K}$. 

Now let $\bU \in \K^{K \times K}$ be a unitary and set $\cS=\cT\cdot_3 \bU$ and $\hcS=\hcT\cdot_3 \bU$. Since modal unitary multiplications preserve the Frobenius norm of a tensor we have $\|\cS - { \hcS}\|_\mathrm{F}  = \| \cT -\hcT \|_\mathrm{F} < \epsilon$. Therefore, for some $i=1,\dots, \lfloor K/2 \rfloor$ we must have that
\[
 \|(\bS_{2i-1},\bS_{2i})-({ \hat{\bS}}_{2i-1}, { \hat{\bS}}_{2i})\|_\mathrm{F} < \epsilon_i,
\]
as otherwise would imply
\[
\|\cS -  \hcS\|_\mathrm{F}^2 \geq  \sum_{k=1}^{\lfloor K/2 \rfloor} \|(\bS_{2k-1}, \bS_{2k})-({\hat{\bS}}_{2k-1}, {\hat{\bS}}_{2k})\|^2_\mathrm{F} \geq   \sum_{k=1}^{\lfloor K/2 \rfloor} \epsilon_k^2 = \epsilon^2.
\]

In particular, from our choice of $\epsilon_i$ the subpencil $({\hat{\bS}}_{2i-1},{\hat{\bS}}_{2i})$ must be simple. Using Proposition \ref{proposition:DefectiveSubpencils} allows us to conclude that the tensor ${\hcT}$ is simple. The result then follows from Proposition \ref{proposition:existenceCor}.
\end{proof}

One may notice that there is no guarantee that the tensor $\cT\cdot_3 \bU$ has a slice mix invertible subpencil  for a particular choice of $\bU$, hence the bound given by Theorem \ref{theorem:MultiplePencilBound} may be equal to $0$ for a particular choice of $\bU$. This does not cause serious difficulty: As a consequence of \cite[Section 4.4 (ii)]{DD14}, if $\cT$ is slice mix invertible then for generic unitaries every subpencil of $\cT$ is slice mix invertible, hence generically the computed bound is nonzero. 

This does point toward a greater issue, however. Different choices of unitaries lead to different values for the bound computed by Theorem \ref{theorem:MultiplePencilBound}. Of course, the result holds for each of these values, so one desires to use a unitary which gives the largest possible value of $\epsilon$. Outside of a few special cases, is not known how to optimally pick the unitary $\bU$.  Determining how to optimally pick $\bU$ is a direction for future research.

A subproblem of picking an optimal unitary is picking the optimal ordering of slices for a fixed unitary $\bU$. E.g. if $\cT \in \K^{4 \times 4 \times 4}$ and $\bU \in K^{4 \times 4}$, then for $\cS=\cT\cdot_3 \bU \in \K^{4 \times 4 \times 4}$ one may arrive at different bounds by considering collections of subpencils of the form $\{(\bS_1, \bS_2), (\bS_3,\bS_4)\}$ or $\{(\bS_1,\bS_3), (\bS_2,\bS_4)\}$ or $\{(\bS_1,\bS_4), (\bS_2,\bS_3)\}$. 

Let $\bE_\bU$ be the matrix with $i,j$ entry equal to the bound given by Theorem \ref{theorem:TensMDBound} for the pencil $(\bS_i,\bS_j)$. Then choosing an optimal collection of subpencils (i.e. an optimal ordering for the slices of $\cS$) is equivalent to marking the entries of $\bE_\bU$ so that there is at most one mark in each row and column of $\bE_\bU$ and so that the norm of the vector of marked entries is maximal.

This problem is  a variant of the longest path problem which is, in a sense, the ``opposite" of the travelling salesman problem (where the total ``distance" is minimized). The longest path problem is known to be NP-hard \cite{UU04}; for our purposes and for small $K$, we may nevertheless consider trying all possible pencil collections/paths. 

\section{Numerical experiments}
\label{sec:figures}

In this section, numerical experiments are used to examine the quality of the main perturbation bounds obtained in the paper, i.e. the spectral variation bound obtained in Theorem \ref{theorem:TensBauerFike} and the existence bounds obtained in Theorem \ref{theorem:MultiplePencilBound} and Theorem \ref{theorem:MLSVDExistenceBound}. In addition we illustrate the effect of a tensor Procrustes calculation as in Theorem \ref{theorem:TensorProcrustes} on the error between a rank $R$ tensor and a rank $R$ approximation of a perturbation of the tensor. Throughout the section, we define the signal to noise ratio (SNR) of a signal tensor $\cT'$ to a noise tensor $\cN'$ to be $20 \log_{10} \left(\frac{\|\cT'\|_\mathrm{F}}{\|\cN'\|_\mathrm{F}}\right).$ The experiments discussed in this section all restrict to the real setting. That is, we restrict to considering real valued tensors and real rank.

In all of our experiments, rank $R$ tensors of size $I \times I \times I$ are randomly generated by generating factor matrices $\bA',\bB',\bC'$ of size $I \times R$ with entries independently sampled from a standard normal distribution. We then scale the factor matrices so that each rank $R$ tensor has unit Frobenius norm. The entries of the noise tensors $\cN'$ are all independently sampled from a standard normal distribution and each $\cN'$ is scaled so that the resulting SNR is as specified.

\subsection{Observed spectral variation vs. spectral variation bound}

We first examine the spectral variation bound obtained from the spectral norm\footnote{As previously mentioned, computation of the spectral norm is NP-hard. Using Tensorlab \cite{VDSBL16}, one may compute a (local) optimum for the best rank $1$ approximation of the transformed error. The Frobenius norm of a best rank $1$ approximation to a tensor is equal to the spectral norm of the tensor \cite{FMPS13}.} of (a transformation of) the error tensor in Theorem \ref{theorem:TensBauerFike} and compare it to the observed spectral variation occurring under a perturbation with a specified SNR. Three cases are treated: First, we consider the case of structured perturbations, i.e. perturbations only affecting the factor matrix $\bC$. In this special case we may compare to the improved bound presented in equation \eqref{eq:JGEBauerFikeSpecial}. Second, we consider the challenging case of generic perturbations on a rank $R$ tensor of size $R \times R \times R$. Finally we consider the situation most commonly occurring in applications of tensors with rank significantly smaller than dimension. That is, we examine spectral variation bounds for rank $R$ tensors of size $I \times I \times I$ with $I>\!\!>R$.

In each experiment we present  the mean observed spectral variation and either the mean computed spectral variation bound $\sqrt{R} \spnorm{\cE \cdot_1 \bA^{-1} \cdot_2 \bB^{-2}}$ or, when appropriate, the mean computed structured spectral variation bound $\spnorm{\cE \cdot_1 \bA^{-1} \cdot_2 \bB^{-2}}$.

\subsubsection{Structured Perturbations}

Figure \ref{figure:SVStructPertRcombo} considers the structured perturbation case on tensors of rank $R$ and size $R \times R \times R$ where $R = 4$ or $R = 10$. For various choices of SNR we randomly generate a rank $R$ tensor $\cT=\cpd$ and an error matrix $\bE$ with entries independently sampled from the standard normal distribution. The error matrix $\bE$ is then scaled so that the SNR of $\bC$ to $\bE$ is as specified. A total of 20 trials are performed for each SNR and for each SNR we report the mean observed spectral variation between $\cT$ and $\hcT:= [\![ \bA,\bB, \bC+\bE]\!]$ in addition to the mean computed value of $\|\cE \cdot_1 \bA^{-1}  \cdot_2 \bB^{-1}  \|_{\mathrm{sp}}$. Here $\cE := \hcT - \cT$. This experiment illustrates the first inequality in equation \eqref{eq:JGEBauerFikeSpecial}.

These figures illustrate that the error bound $\|\cE \cdot_1 \bA^{-1} \cdot_2 \bB^{-1}  \|_{\mathrm{sp}}$ is highly predictive of the observed spectral variation when considering structured perturbations. We remind the reader that the error bound $\|\cE \cdot_1 \bA^{-1}  \cdot_2 \bB^{-1}  \|_{\mathrm{sp}}$ is guaranteed to be an upper bound in this special setting. 

\begin{figure}[h!]
\hspace*{-0.5cm}
%
%
\definecolor{mycolor1}{rgb}{0.00000,0.44700,0.74100}%
\definecolor{mycolor2}{rgb}{0.85000,0.32500,0.09800}%
\definecolor{mycolor3}{rgb}{0.92900,0.69400,0.12500}%
\begin{tikzpicture}

\begin{axis}[%
width=5.833in,
height= 0in,
at={(0in,6.365in)},
scale only axis,
clip=false,
xmin=0,
xmax=1,
ymin=0,
ymax=1,
axis line style={draw=none},
ticks=none,
axis x line*=bottom,
axis y line*=left
]
\node[align=center]
at (axis cs:0.5,-0.05) {Structured perturbations: Tensor rank $R = 4$ or $10$; Dimensions = $R \times R \times R$};
\end{axis}

\begin{axis}[%
width=1.8 in,
height=1.1in,
at={(0.4 in, 4.8 in)},
scale only axis,
xmin=0,
xmax=21,
xtick={1,5,9,13,17,21},
xticklabels={{0},{20},{40},{60},{80},{100}},
xlabel style={font=\color{white!15!black}},
xlabel={SNR of $\mathbf{C}$ to $\mathbf{E}$ (dB)},
ymin=-5,
ymax=0,
axis background/.style={fill=white},
title style={font=\bfseries},
title={$R=4$: log$_{\mathbf{10}}$ of SV bounds},
axis x line*=bottom,
axis y line*=left,
legend style={legend cell align=left, align=left, draw=white!15!black,draw=none, {nodes={scale=0.5, transform shape}}, at={(0.05,0.05)},anchor= south west}
]
\addplot [color=mycolor1, line width=1.5pt, mark=triangle, mark options={solid, mycolor1}]
  table[row sep=crcr]{%
1	-0.116477010751614\\
2	-0.183477107126603\\
3	-0.373863231827866\\
4	-0.555157165385734\\
5	-0.718665041871438\\
6	-1.01046340540526\\
7	-1.32497527916855\\
8	-1.53328976359217\\
9	-1.76652084043969\\
10	-1.98851618410718\\
11	-2.26312938004833\\
12	-2.52801146219859\\
13	-2.81550005476841\\
14	-3.05511823532838\\
15	-3.30828703238783\\
16	-3.55246108850388\\
17	-3.83444480193955\\
18	-3.94522081860421\\
19	-4.28503607860187\\
20	-4.40821436585904\\
21	-4.82868047982469\\
};
\addlegendentry{$\mathrm{sv} [\mathcal{T},\tilde{\mathcal{T}}]$}

\addplot [color=mycolor2, line width=1.5pt, mark=square, mark options={solid, mycolor2}]
  table[row sep=crcr]{%
1	0\\
2	-0.0509443195827028\\
3	-0.250527302688664\\
4	-0.442319365575169\\
5	-0.642262346421354\\
6	-0.934975626486451\\
7	-1.27445422142936\\
8	-1.46462961155203\\
9	-1.69258853609179\\
10	-1.92573675170748\\
11	-2.2226982270912\\
12	-2.46657153585766\\
13	-2.74247165098441\\
14	-2.9906656420541\\
15	-3.25952016640671\\
16	-3.48459503881818\\
17	-3.77623257735657\\
18	-3.90899343042828\\
19	-4.2486673503611\\
20	-4.32814194354316\\
21	-4.77329377006102\\
};
\addlegendentry{$\|\mathcal{E} \cdot_1 \mathbf{A}^{-1} \cdot_2 \mathbf{B}^{-1}\|_{\mathrm{sp}}$}

\end{axis}

\begin{axis}[%
width=1.8 in,
height=1.1in,
at={(3.3 in, 4.8 in)},
scale only axis,
xmin=0,
xmax=21,
xtick={1,5,9,13,17,21},
xticklabels={{0},{20},{40},{60},{80},{100}},
xlabel style={font=\color{white!15!black}},
xlabel={SNR of $\mathbf{C}$ to $\mathbf{E}$ (dB)},
ymin=-5,
ymax=0,
axis background/.style={fill=white},
title style={font=\bfseries},
title={$R=10$: log$_{\mathbf{10}}$ of SV bounds},
axis x line*=bottom,
axis y line*=left,
legend style={legend cell align=left, align=left, draw=white!15!black,draw=none, {nodes={scale=0.5, transform shape}}, at={(0.05,0.05)},anchor= south west}
]
\addplot [color=mycolor1, line width=1.5pt, mark=triangle, mark options={solid, mycolor1}]
  table[row sep=crcr]{%
1	-0.0706187973329936\\
2	-0.149503286251377\\
3	-0.344573201282323\\
4	-0.554617990787828\\
5	-0.79086553850672\\
6	-1.04923761590957\\
7	-1.31228849260815\\
8	-1.5234027307848\\
9	-1.78988938453193\\
10	-1.99379711297247\\
11	-2.29324470276904\\
12	-2.51376200609233\\
13	-2.79202768112432\\
14	-3.04097914292411\\
15	-3.30807773847195\\
16	-3.5499444026367\\
17	-3.78357420891092\\
18	-4.03711494828497\\
19	-4.30044547683669\\
20	-4.55061912467796\\
21	-4.780231715732\\
};
\addlegendentry{$\mathrm{sv} [\mathcal{T},\tilde{\mathcal{T}}]$}

\addplot [color=mycolor2, line width=1.5pt, mark=square, mark options={solid, mycolor2}]
  table[row sep=crcr]{%
1	0\\
2	-0.0484520492738962\\
3	-0.2888970724568\\
4	-0.53415637744559\\
5	-0.761248264324663\\
6	-1.02200488152386\\
7	-1.29172581758059\\
8	-1.50743483982311\\
9	-1.77089520181175\\
10	-1.96872203871473\\
11	-2.27233961074361\\
12	-2.49958470265831\\
13	-2.779626437551\\
14	-3.02637584093322\\
15	-3.28840328572747\\
16	-3.53015793210016\\
17	-3.76273519273047\\
18	-4.01374797361305\\
19	-4.26002899894166\\
20	-4.53612514717032\\
21	-4.75882397181556\\
};
\addlegendentry{$\|\mathcal{E} \cdot_1 \mathbf{A}^{-1} \cdot_2 \mathbf{B}^{-1}\|_{\mathrm{sp}}$}

\end{axis}
\end{tikzpicture}%
	\caption{Observed spectral variation and computed bound between tensors of the form $\cpd$ and $[\![\bA,\bB,\bC+\bE]\!]$ with rank $R=4$ or $10$. The structured bound $\|\cE \cdot_1 \bA^{-1}  \cdot_2 \bB^{-1}  \|_{\mathrm{sp}}$ is observed to be highly predictive of the observed spectral variation.}
	\label{figure:SVStructPertRcombo}
\end{figure}

\subsubsection{Generic Perturbations: Rank equal to dimensions}

Figure \ref{figure:SVGenPertRcombo} illustrates the case of generic perturbations on tensors of rank $R$ and size $R \times R \times R$ where $R = 4$ or $R = 10$. In each trial we randomly generate a rank $R$ tensor $\cT$ and noise $\cN$ at the specified SNR, then compute a rank $R$ approximation $\hcT$ of $\cT+\cN$ using Tensorlab \cite{VDSBL16}. A total of 50 trials are performed for each SNR, and for each SNR we report the mean observed spectral variation between $\cT$ and $\hcT$ as well as the mean computed value for $\sqrt{R} \|\cE \cdot_1 \bA^{-1}  \cdot_2 \bB^{-1}  \|_{\mathrm{sp}}$ where $\cE := \hcT - \cT$. This experiment illustrates the first inequality in equation \eqref{eq:JGEBauerFike}. 

This case is particularly challenging since the perturbation is applied directly to the core of the tensor, which is not typically the case in applications. Furthermore, our bound serves as a worst case scenario bound which is unlikely to be achieved under generic perturbations.

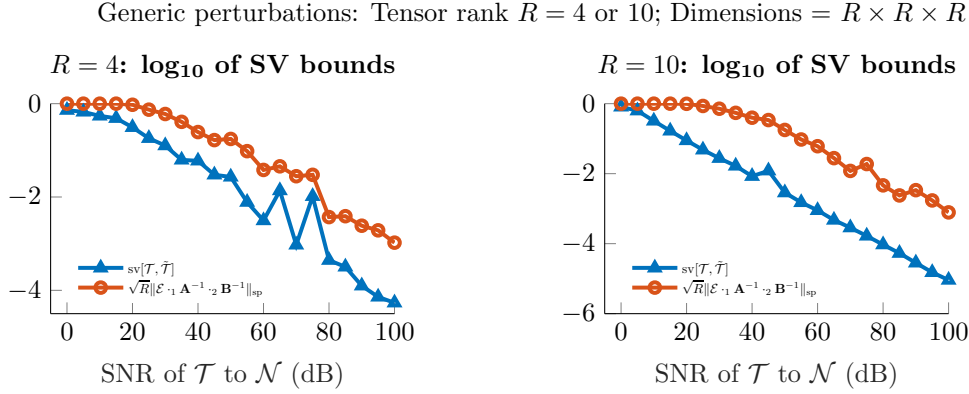
\begin{figure}[h!]
\hspace*{-0.5cm}
%
%
\definecolor{mycolor1}{rgb}{0.00000,0.44700,0.74100}%
\definecolor{mycolor2}{rgb}{0.85000,0.32500,0.09800}%
\begin{tikzpicture}

\begin{axis}[%
width=5.833in,
height= 0in,
at={(0in,6.365in)},
scale only axis,
clip=false,
xmin=0,
xmax=1,
ymin=0,
ymax=1,
axis line style={draw=none},
ticks=none,
axis x line*=bottom,
axis y line*=left
]
\node[align=center]
at (axis cs:0.5,-0.05) {Generic perturbations: Tensor rank $R = 4$ or $10$; Dimensions = $R \times R\times R$};
\end{axis}

\begin{axis}[%
width=1.8in,
height=1.1in,
at={(0.4 in, 4.8 in)},
scale only axis,
xmin=0,
xmax=21,
xtick={1,5,9,13,17,21},
xticklabels={{0},{20},{40},{60},{80},{100}},
xlabel style={font=\color{white!15!black}},
xlabel={SNR of $\mathcal{T}$ to $\mathcal{N}$ (dB)},
ymin=-4.5,
ymax=0,
axis background/.style={fill=white},
title style={font=\bfseries},
title={$R=4$: log$_{\mathbf{10}}$ of SV bounds},
axis x line*=bottom,
axis y line*=left,
legend style={legend cell align=left, align=left, draw=white!15!black,draw=none, {nodes={scale=0.5, transform shape}}, at={(0.05,0.05)},anchor= south west}
]
\addplot [color=mycolor1, line width=1.5pt, mark=triangle, mark options={solid, mycolor1}]
  table[row sep=crcr]{%
1	-0.13911154479153\\
2	-0.176635363074034\\
3	-0.262215874893949\\
4	-0.312848012692741\\
5	-0.507172196002718\\
6	-0.74201410436537\\
7	-0.898467638601102\\
8	-1.20490128170139\\
9	-1.2246568238469\\
10	-1.52462328804354\\
11	-1.5697394309662\\
12	-2.11039562038559\\
13	-2.50459451697712\\
14	-1.86197638443572\\
15	-3.02601212370162\\
16	-1.98571924889219\\
17	-3.35425888750719\\
18	-3.49729002335189\\
19	-3.90295349582193\\
20	-4.14114726154413\\
21	-4.26713099143944\\
};
\addlegendentry{$\mathrm{sv} [\mathcal{T},\tilde{\mathcal{T}}]$}

\addplot [color=mycolor2, line width=1.5pt, mark=o, mark options={solid, mycolor2}]
  table[row sep=crcr]{%
1	0\\
2	0\\
3	-0.000102962933209554\\
4	0\\
5	-0.0234702476998228\\
6	-0.128405241414541\\
7	-0.221458480529286\\
8	-0.389363087328419\\
9	-0.608493557581324\\
10	-0.780414769011799\\
11	-0.755855902835878\\
12	-1.01752212968345\\
13	-1.42010860767131\\
14	-1.34267419117452\\
15	-1.55456814097523\\
16	-1.52481037882938\\
17	-2.43048790397163\\
18	-2.41123535384863\\
19	-2.61532484043395\\
20	-2.71425652999255\\
21	-2.97832144448641\\
};
\addlegendentry{$\sqrt{R}\|\mathcal{E} \cdot_1 \mathbf{A}^{-1} \cdot_2 \mathbf{B}^{-1}\|_{\mathrm{sp}}$}

\end{axis}

\begin{axis}[%
width=1.8in,
height=1.1in,
at={(3.3 in, 4.8 in)},
scale only axis,
xmin=0,
xmax=21,
xtick={1,5,9,13,17,21},
xticklabels={{0},{20},{40},{60},{80},{100}},
xlabel style={font=\color{white!15!black}},
xlabel={SNR of $\mathcal{T}$ to $\mathcal{N}$ (dB)},
ymin=-6,
ymax=0,
axis background/.style={fill=white},
title style={font=\bfseries},
title={$R=10$: log$_{\mathbf{10}}$ of SV bounds},
axis x line*=bottom,
axis y line*=left,
legend style={legend cell align=left, align=left, draw=white!15!black,draw=none, {nodes={scale=0.5, transform shape}}, at={(0.05,0.05)},anchor= south west}
]
\addplot [color=mycolor1, line width=1.5pt, mark=triangle, mark options={solid, mycolor1}]
  table[row sep=crcr]{%
1	-0.0913219652080639\\
2	-0.202118585946198\\
3	-0.493106438918827\\
4	-0.777836966060567\\
5	-1.05205945902218\\
6	-1.31898309761447\\
7	-1.55965980320979\\
8	-1.77934386671399\\
9	-2.07390818075237\\
10	-1.91696769602434\\
11	-2.54261973419746\\
12	-2.82196032132937\\
13	-3.04911300720866\\
14	-3.32476364044404\\
15	-3.54287694735269\\
16	-3.77911711464477\\
17	-4.02797808428368\\
18	-4.27207173015177\\
19	-4.54810544825681\\
20	-4.82074076107129\\
21	-5.0397126577405\\
};
\addlegendentry{$\mathrm{sv} [\mathcal{T},\tilde{\mathcal{T}}]$}

\addplot [color=mycolor2, line width=1.5pt, mark=o, mark options={solid, mycolor2}]
  table[row sep=crcr]{%
1	0\\
2	0\\
3	0\\
4	0\\
5	-0.00415134392618465\\
6	-0.0657223152235864\\
7	-0.144545500081752\\
8	-0.262462632781808\\
9	-0.400859226680908\\
10	-0.468502029250907\\
11	-0.750942741892839\\
12	-1.0236992453962\\
13	-1.21564889917029\\
14	-1.55625794619393\\
15	-1.92138191782927\\
16	-1.72493863043979\\
17	-2.33414020837222\\
18	-2.62210045109134\\
19	-2.47119410740203\\
20	-2.76472156027544\\
21	-3.10627826764917\\
};
\addlegendentry{$\sqrt{R}\|\mathcal{E} \cdot_1 \mathbf{A}^{-1} \cdot_2 \mathbf{B}^{-1}\|_{\mathrm{sp}}$}

\end{axis}

\end{tikzpicture}%
	\caption{Observed spectral variation and computed bound for a rank $R$ tensor $\cT$ and a rank $R$ approximation to $\cT+\cN$ where $R=4$ or $10$. The bound $\sqrt{R}\|\cE \cdot_1 \bA^{-1}  \cdot_2 \bB^{-1}  \|_{\mathrm{sp}}$ typically exceeds the observed spectral variation by about one order of magnitude when $R=4$ and by about two orders of magnitude when $R=10$. For perspective we note that $\log_{10} (\sqrt{4}) \approx 0.3$ while $\log_{10} (\sqrt{10}) = 0.5.$}
	\label{figure:SVGenPertRcombo}
\end{figure}

\subsubsection{Generic Perturbations: Rank less than dimensions}

Figures \ref{figure:svscdat4combo} and \ref{figure:svscdat10combo} consider the case which most commonly occurs in applications where the rank of the tensor of interest is significantly less than dimensions of the tensor. Here we work with tensors $ \cT'$ of rank $R$ and dimensions $I \times I \times I$ where either $R = 4$ and $I = 10,20,$ or $100$ or $R = 10$ and $I = 20$ or $100$. 

Let $\hcT'$ denote a rank $R$ approximation of $\cT'+\cN'$. To directly apply the spectral variation bound obtained from the spectral norm in Theorem \ref{theorem:TensBauerFike} to a pair of rank $R$ tensors, the factor matrices of at least one tensor should be invertible. In particular the tensors should have dimensions equal to their rank. Thus it is most appropriate to apply this bound to core tensors $\cT$ and $\hcT$ of size $R \times R \times R$ satisfying $\|\cT - \hcT \|_\mathrm{F} \leq \|\cT' - \hcT'\|_\mathrm{F}$ as described in the tensor Procrustes problem of Theorem \ref{theorem:TensorProcrustes}. Write $\cT = [\![\bA,\bB,\bC]\!]$. For each SNR we report the mean observed spectral variation between the core tensors $\cT$ and $\hcT$ as well as the mean computed value for $\sqrt{R} \|\cE \cdot_1 \bA^{-1} \cdot_2 \bB^{-1} \|_{\mathrm{sp}}$ where $\cE := \hcT - \cT$. For each choice of $I$ and $R$ the mean is computed over $20$ trials.

In this setting we find that the error bound $\sqrt{R} \|\cE \cdot_1 \bA^{-1} \cdot_2 \bB^{-1} \|_{\mathrm{sp}}$ from the first inequality in equation \eqref{eq:JGEBauerFike} is highly predictive of the observed spectral variation, with the error bound typically within a half order of magnitude of the observed spectral variation. Furthermore, we find that the observed spectral variation and spectral variation bound become remarkably small as $I$ grows, even for low SNR. It is well known that computation of the core of an observed tensor can significantly increase the effective SNR. Our results further suggest that a significant portion of the error remaining after computing a rank $R$ approximation of an observed tensor $\cT'+\cN'$ is in the form of rotational error (i.e. error accounted for by unitary rotations of the mode-$j$ subspaces of the tensor) which disappears after a Tensor Procrustes computation. 

While this rotational error is still meaningful from the perspective of quality of the estimated factors, rotational error cannot lead to nonexistence of a best rank $R$ approximation and cannot negatively impact the condition number of computing a CPD (see \cite{V17} for details on this condition number), as these issues are invariant under unitary changes of basis. The observation that, in the setting of independent and identically distributed (i.i.d) noise, a significant portion error observed between the original tensor $\cT'$ and a rank $R$ approximation to the observed tensor $\cT'+\cN'$ is merely rotational error further explains the success of CPD in many large scale settings. In particular, this observation illustrates that CPD computation in practice is likely to be more stable than one might naively expect. See Section \ref{sec:TensorProcrustesExperiments} for experiments in this direction.

\begin{figure}[h!]
\hspace*{-1cm}
%
%
\definecolor{mycolor1}{rgb}{0.00000,0.44700,0.74100}%
\definecolor{mycolor2}{rgb}{0.85000,0.32500,0.09800}%
\begin{tikzpicture}
\pgfplotsset{scaled y ticks=false}

\begin{axis}[%
width=5.833in,
height= 0in,
at={(0in,8.465in)},
scale only axis,
clip=false,
xmin=0,
xmax=1,
ymin=0,
ymax=1,
axis line style={draw=none},
ticks=none,
axis x line*=bottom,
axis y line*=left
]
\node[align=center]
at (axis cs:0.5,-0.05) {Generic perturbations with Procrustes: Tensor rank $R = 4$; Dimensions = $I \times I \times I$};
\end{axis}

\begin{axis}[%
width=1.6in,
height=1.1in,
at={(0 in, 6.9 in)},
scale only axis,
xmin=0,
xmax=21,
xtick={1,5,9,13,17,21},
xticklabels={{0},{20},{40},{60},{80},{100}},
xlabel style={font=\color{white!15!black}},
xlabel={SNR $\mathcal{T}'$ to $\mathcal{N}'$ (dB)},
ymin=-6,
ymax=0,
axis background/.style={fill=white},
title style={font=\bfseries},
title={$I = 10$: log$_{\mathbf{10}}$ of SV bounds},
axis x line*=bottom,
axis y line*=left,
legend style={legend cell align=left, align=left, draw=white!15!black,draw=none, {nodes={scale=0.5, transform shape}}, at={(0.05,0.05)},anchor= south west}
]
\addplot [color=mycolor1, line width=1.5pt, mark=triangle, mark options={solid, mycolor1}]
  table[row sep=crcr]{%
1	-0.609866050541206\\
2	-0.938296521713281\\
3	-1.17491658554145\\
4	-1.47628621031307\\
5	-1.76053826344315\\
6	-1.93352375742741\\
7	-2.22090761799094\\
8	-2.48703353541862\\
9	-2.61551886336209\\
10	-3.02618605288201\\
11	-3.23759688405011\\
12	-3.44438559798338\\
13	-3.60104094585336\\
14	-3.97269991576793\\
15	-4.12466207506072\\
16	-4.45346277971139\\
17	-4.66623135907378\\
18	-4.98246293636933\\
19	-5.2265081026971\\
20	-5.43104206572295\\
21	-5.68044256267452\\
};
\addlegendentry{$\mathrm{sv} [\mathcal{T},\hat{\mathcal{T}}]$}

\addplot [color=mycolor2, line width=1.5pt, mark=o, mark options={solid, mycolor2}]
  table[row sep=crcr]{%
1	-0.0845588532685347\\
2	-0.38687713944725\\
3	-0.667847789612793\\
4	-0.90595493470423\\
5	-1.2051928235322\\
6	-1.44957103990002\\
7	-1.66875857860957\\
8	-1.92501171435374\\
9	-2.10938408634806\\
10	-2.51644174965368\\
11	-2.69872064718395\\
12	-2.96137103185676\\
13	-3.06202113826678\\
14	-3.48717966447529\\
15	-3.68325752741784\\
16	-3.95943789613344\\
17	-4.17095569745989\\
18	-4.41036630717989\\
19	-4.69679410387869\\
20	-4.92060755415732\\
21	-5.16329663691004\\
};
\addlegendentry{$\sqrt{R}\|\mathcal{E} \cdot_1 \mathbf{A}^{-1} \cdot_2 \mathbf{B}^{-1}\|_{\mathrm{sp}}$}

\end{axis}

\begin{axis}[%
width=1.6in,
height=1.1in,
at={(2.2 in, 6.9 in)},
scale only axis,
xmin=0,
xmax=21,
xtick={1,5,9,13,17,21},
xticklabels={{0},{20},{40},{60},{80},{100}},
xlabel style={font=\color{white!15!black}},
xlabel={SNR $\mathcal{T}'$ to $\mathcal{N}'$ (dB)},
ymin=-7,
ymax=0,
axis background/.style={fill=white},
title style={font=\bfseries},
title={$I = 20$: log$_{\mathbf{10}}$ of SV bounds},
axis x line*=bottom,
axis y line*=left,
legend style={legend cell align=left, align=left, draw=white!15!black,draw=none, {nodes={scale=0.5, transform shape}}, at={(0.05,0.05)},anchor= south west}
]
\addplot [color=mycolor1, line width=1.5pt, mark=triangle, mark options={solid, mycolor1}]
  table[row sep=crcr]{%
1	-1.29813485909508\\
2	-1.56563147940272\\
3	-1.77125477223279\\
4	-2.10654470587454\\
5	-2.29093949959981\\
6	-2.57050505266392\\
7	-2.74918187262437\\
8	-3.00572587749753\\
9	-3.32805618197192\\
10	-3.45742276454658\\
11	-3.77853694111161\\
12	-4.02119991964739\\
13	-4.31312375808952\\
14	-4.5582137658081\\
15	-4.75605992934915\\
16	-5.04236986158798\\
17	-5.37680010979613\\
18	-5.62996018700338\\
19	-5.82958024846941\\
20	-6.10570942316942\\
21	-6.28366133716299\\
};
\addlegendentry{$\mathrm{sv} [\mathcal{T},\hat{\mathcal{T}}]$}

\addplot [color=mycolor2, line width=1.5pt, mark=o, mark options={solid, mycolor2}]
  table[row sep=crcr]{%
1	-0.778565134995623\\
2	-1.02861360917976\\
3	-1.29218969481938\\
4	-1.54605211550231\\
5	-1.7941221093465\\
6	-2.02023053883162\\
7	-2.30119658962647\\
8	-2.47767190884531\\
9	-2.75819024111379\\
10	-2.99380453684864\\
11	-3.27909323924689\\
12	-3.45903366305149\\
13	-3.79503393034761\\
14	-4.05546243521577\\
15	-4.28231116966214\\
16	-4.53578178016131\\
17	-4.80531470457729\\
18	-5.06723221779637\\
19	-5.30861182500867\\
20	-5.55215375641449\\
21	-5.7844696571786\\
};
\addlegendentry{$\sqrt{R}\|\mathcal{E} \cdot_1 \mathbf{A}^{-1} \cdot_2 \mathbf{B}^{-1}\|_{\mathrm{sp}}$}

\end{axis}

\begin{axis}[%
width=1.6in,
height=1.1in,
at={(4.4 in, 6.9 in)},
scale only axis,
xmin=0,
xmax=21,
xtick={1,5,9,13,17,21},
xticklabels={{0},{20},{40},{60},{80},{100}},
xlabel style={font=\color{white!15!black}},
xlabel={SNR $\mathcal{T}'$ to $\mathcal{N}'$ (dB)},
ymin=-8,
ymax=-1,
axis background/.style={fill=white},
title style={font=\bfseries},
title={$I = 100$: log$_{\mathbf{10}}$ of SV bounds},
axis x line*=bottom,
axis y line*=left,
legend style={legend cell align=left, align=left, draw=white!15!black,draw=none, {nodes={scale=0.5, transform shape}}, at={(0.05,0.05)},anchor= south west}
]
\addplot [color=mycolor1, line width=1.5pt, mark=triangle, mark options={solid, mycolor1}]
  table[row sep=crcr]{%
1	-2.48080320461759\\
2	-2.69336306433136\\
3	-2.95261013645824\\
4	-3.25210364919466\\
5	-3.44683430818432\\
6	-3.76323975446438\\
7	-3.98004970818159\\
8	-4.28347558841497\\
9	-4.45741559487913\\
10	-4.75179837558785\\
11	-4.98820325135706\\
12	-5.19220373815934\\
13	-5.47577512912929\\
14	-5.70519596018707\\
15	-5.96436946699105\\
16	-6.27255469898054\\
17	-6.51936967585345\\
18	-6.71023320731331\\
19	-7.00400685097349\\
20	-7.2205759443153\\
21	-7.47012624532477\\
};
\addlegendentry{$\mathrm{sv} [\mathcal{T},\hat{\mathcal{T}}]$}

\addplot [color=mycolor2, line width=1.5pt, mark=o, mark options={solid, mycolor2}]
  table[row sep=crcr]{%
1	-1.8980907777466\\
2	-2.20777651961374\\
3	-2.3854257822816\\
4	-2.70517164224111\\
5	-2.91465580903266\\
6	-3.17985734099745\\
7	-3.43071702350417\\
8	-3.68163520977294\\
9	-3.92348227108086\\
10	-4.16539932197368\\
11	-4.42232206546516\\
12	-4.66167017757718\\
13	-4.93388719402756\\
14	-5.17933001610073\\
15	-5.42860378857838\\
16	-5.70574638273285\\
17	-5.95940492928619\\
18	-6.18239704972509\\
19	-6.4397323659254\\
20	-6.66644248517966\\
21	-6.94245217162979\\
};
\addlegendentry{$\sqrt{R}\|\mathcal{E} \cdot_1 \mathbf{A}^{-1} \cdot_2 \mathbf{B}^{-1}\|_{\mathrm{sp}}$}

\end{axis}

\end{tikzpicture}%
		\caption{Observed spectral variation and computed bound for cores $\cT$ and $\hcT$ where $\cT'$ is a rank $4$ tensor of size $I \times I \times I$ with $I = 10,20$ or $100$ and $\hcT'$ is a rank $4$ approximation to $\cT'+\cN'$. The bound $\sqrt{R} \|\cE \cdot_1 \bA^{-1} \cdot_2 \bB^{-1} \|_{\mathrm{sp}}$ provides a strong estimate for the observed spectral variation. The spectral variation bound and observed spectral variation both decrease as $I$ grows.}
	\label{figure:svscdat4combo}
\end{figure} 

\begin{figure}[h!]
\hspace*{-0.5cm}
%
%
\definecolor{mycolor1}{rgb}{0.00000,0.44700,0.74100}%
\definecolor{mycolor2}{rgb}{0.85000,0.32500,0.09800}%
\begin{tikzpicture}
\pgfplotsset{scaled y ticks=false}

\begin{axis}[%
width=5.833in,
height= 0in,
at={(0in,6.365in)},
scale only axis,
clip=false,
xmin=0,
xmax=1,
ymin=0,
ymax=1,
axis line style={draw=none},
ticks=none,
axis x line*=bottom,
axis y line*=left
]
\node[align=center]
at (axis cs:0.5,-0.05) {Generic perturbations with Procrustes: Tensor rank $R = 10$; Dimensions = $I \times I \times I$};
\end{axis}

\begin{axis}[%
width=1.8in,
height=1.1in,
at={(0.4 in, 4.8 in)},
scale only axis,
xmin=0,
xmax=21,
xtick={1,5,9,13,17,21},
xticklabels={{0},{20},{40},{60},{80},{100}},
xlabel style={font=\color{white!15!black}},
xlabel={SNR $\mathcal{T}'$ to $\mathcal{N}'$ (dB)},
ymin=-6,
ymax=0,
axis background/.style={fill=white},
title style={font=\bfseries},
title={$I=20$: log$_{\mathbf{10}}$ of SV bounds},
axis x line*=bottom,
axis y line*=left,
legend style={fill opacity=0.0, text opacity = 1, legend cell align=left, align=left, draw=white!15!black,draw=none, {nodes={scale=0.5, transform shape}}, at={(0.05,0.05)},anchor= south west}
]
\addplot [color=mycolor1, line width=1.5pt, mark=triangle, mark options={solid, mycolor1}]
  table[row sep=crcr]{%
1	-0.710726518528614\\
2	-0.986661702350301\\
3	-1.29530409574156\\
4	-1.47033490990021\\
5	-1.75331574677563\\
6	-2.00931289348045\\
7	-2.24419930501922\\
8	-2.51510194553013\\
9	-2.76099294063877\\
10	-3.00493391452947\\
11	-3.26973520699618\\
12	-3.49516380259087\\
13	-3.72186835404251\\
14	-4.00805808248557\\
15	-4.25746918595745\\
16	-4.50558476730629\\
17	-4.74648937719969\\
18	-5.03115708433267\\
19	-5.30627802776872\\
20	-5.48709127751261\\
21	-5.7668092974944\\
};
\addlegendentry{$\mathrm{sv} [\mathcal{T},\hat{\mathcal{T}}]$}

\addplot [color=mycolor2, line width=1.5pt, mark=o, mark options={solid, mycolor2}]
  table[row sep=crcr]{%
1	-0.00859799324247358\\
2	-0.274204606828168\\
3	-0.558703878116558\\
4	-0.780380907384668\\
5	-1.07153924718876\\
6	-1.34588038916869\\
7	-1.56816507310959\\
8	-1.79857297534315\\
9	-2.09999283483034\\
10	-2.3443099488039\\
11	-2.58880122120803\\
12	-2.80446493032903\\
13	-3.07680599162938\\
14	-3.3109739775119\\
15	-3.57723727637126\\
16	-3.80980996057746\\
17	-4.08181707796653\\
18	-4.3651496775389\\
19	-4.57384359752204\\
20	-4.84023690962247\\
21	-5.05793528011387\\
};
\addlegendentry{$\sqrt{R}\|\mathcal{E} \cdot_1 \mathbf{A}^{-1} \cdot_2 \mathbf{B}^{-1}\|_{\mathrm{sp}}$}

\end{axis}

\begin{axis}[%
width=1.8in,
height=1.1in,
at={(3.3 in, 4.8 in)},
scale only axis,
xmin=0,
xmax=21,
xtick={1,5,9,13,17,21},
xticklabels={{0},{20},{40},{60},{80},{100}},
xlabel style={font=\color{white!15!black}},
xlabel={SNR $\mathcal{T}'$ to $\mathcal{N}'$ (dB)},
ymin=-7,
ymax=-1,
axis background/.style={fill=white},
title style={font=\bfseries},
title={$I = 100$: log$_{\mathbf{10}}$ of SV bounds},
axis x line*=bottom,
axis y line*=left,
legend style={fill opacity=0.0, text opacity = 1, legend cell align=left, align=left, draw=white!15!black,draw=none, {nodes={scale=0.5, transform shape}}, at={(0.05,0.05)},anchor= south west}
]
\addplot [color=mycolor1, line width=1.5pt, mark=triangle, mark options={solid, mycolor1}]
  table[row sep=crcr]{%
1	-2.0090718328634\\
2	-2.24129480528697\\
3	-2.49503561341188\\
4	-2.74000272599874\\
5	-2.9770251637847\\
6	-3.24321868182697\\
7	-3.48934608705483\\
8	-3.7456478467392\\
9	-4.00793764348633\\
10	-4.24425014663218\\
11	-4.47791839637077\\
12	-4.74148126190687\\
13	-4.98748155586311\\
14	-5.27137599830859\\
15	-5.49437987776206\\
16	-5.70685272830257\\
17	-5.96171876166618\\
18	-6.25625645429616\\
19	-6.51898467995274\\
20	-6.7415377014053\\
21	-6.99327242824882\\
};
\addlegendentry{$\mathrm{sv} [\mathcal{T},\hat{\mathcal{T}}]$}

\addplot [color=mycolor2, line width=1.5pt, mark=o, mark options={solid, mycolor2}]
  table[row sep=crcr]{%
1	-1.33242918457415\\
2	-1.6060583122114\\
3	-1.83330086707208\\
4	-2.07639295107494\\
5	-2.33469510519336\\
6	-2.5844822478819\\
7	-2.8400477438731\\
8	-3.10726139056738\\
9	-3.34337069409201\\
10	-3.60605419441287\\
11	-3.84949625803371\\
12	-4.11449525584032\\
13	-4.34202882102641\\
14	-4.59253901480263\\
15	-4.87163498469086\\
16	-5.08479739557542\\
17	-5.33602655573732\\
18	-5.59095705607594\\
19	-5.84677001423183\\
20	-6.10703254496413\\
21	-6.35267553478532\\
};
\addlegendentry{$\sqrt{R}\|\mathcal{E} \cdot_1 \mathbf{A}^{-1} \cdot_2 \mathbf{B}^{-1}\|_{\mathrm{sp}}$}

\end{axis}

\end{tikzpicture}%
		\caption{Observed spectral variation and computed bound for cores $\cT$ and $\hcT$ where $\cT'$ is a rank $10$ tensor of size $I \times I \times I$ with $I = 20$ or $100$ and $\hcT'$ is a rank $10$ approximation to $\cT'+\cN'$. The bound $\sqrt{R} \|\cE \cdot_1 \bA^{-1} \cdot_2 \bB^{-1} \|_{\mathrm{sp}}$ provides a strong estimate for the observed spectral variation. The spectral variation bound and observed spectral variation both decrease as $I$ grows.}
	\label{figure:svscdat10combo}
\end{figure} 

\subsection{Bounds for existence of best rank $R$ approximations}

We now present the mean of the bound $\epsilon$ obtained using Theorem \ref{theorem:MultiplePencilBound} for the radius of a neighborhood around a randomly generated rank $R$ tensor of size $R \times R \times R$ of unit Frobenius norm in which best rank $R$ approximations are guaranteed to exist. For each choice of $R$, the bound $\epsilon$ is averaged over $20$ trials to compute the mean bound $\epsavg$. The mean bound $\epsavg$ is plotted in dB as $-20 \log_{10} (\epsavg)$. Thus, on average for the tensors in our experiment, a best rank $R$ approximation is guaranteed to exist if one has SNR greater than the reported value. The choices of $R$ considered are $R=2$ through $10$. 

As previously discussed, the bound obtained from Theorem \ref{theorem:MultiplePencilBound} depends on the choice of unitary $\bU$ chosen when setting $\cS = \cT \cdot_3 \bU$. While finding an optimal unitary is computationally challenging, as evidenced by its relationship to the longest path problem, one may simply try many randomly generated unitaries. For each tensor $\cT$ we try a total of $1,000$ random unitary matrices  and record the largest bound found after $1,10,100,$ and $1000$ unitaries tried. As a further step, one may find the optimal ordering of tensor slices for each unitary. Bounds computed without reordering slices are presented in the left image of Figure \ref{figure:AvgChordalBoundPlot} while bounds computed with finding the optimal slice ordering are presented in the right image of the same figure. The collection of tensors and unitaries tried in each case are the same.

\begin{figure}[h!]
\hspace*{-0.7cm}
%
%
\definecolor{mycolor1}{rgb}{0.00000,0.44700,0.74100}%
\definecolor{mycolor2}{rgb}{0.85000,0.32500,0.09800}%
\definecolor{mycolor3}{rgb}{0.92900,0.69400,0.12500}%
\definecolor{mycolor4}{rgb}{0.49400,0.18400,0.55600}%
\begin{tikzpicture}

\begin{axis}[%
width= 1.8 in,
height=1.4 in,
at={(0.4 in, 3.0 in)},
scale only axis,
xmin=1,
xmax=9,
xtick={1,2,3,4,5,6,7,8,9},
xticklabels={{2},{3},{4},{5},{6},{7},{8},{9},{10}},
xlabel style={font=\color{white!15!black}},
xlabel={Rank of tensor},
ymin=30,
ymax=100,
ytick={25,45,65,85},
yticklabels={{25},{45},{65},{85}},
ylabel style={font=\color{white!15!black}},
ylabel={Mean bound plotted in dB},
axis background/.style={fill=white},
title style={font=\bfseries},
title={Without reordering slices},
axis x line*=bottom,
axis y line*=left,
legend style={at={(0.97,0.03)}, anchor=south east, legend cell align=left, align=left, fill=none, draw=none,{nodes={scale=0.55, transform shape}}}
]
\addplot [color=mycolor1, line width=1.5pt, mark=triangle, mark options={solid, mycolor1}]
  table[row sep=crcr]{%
1	34.492556293437\\
2	48.0477296704179\\
3	61.5406387631577\\
4	73.8923813813984\\
5	79.6168511812558\\
6	89.6786139226781\\
7	92.5115659495182\\
8	93.1915249511392\\
9	98.3210703483774\\
};
\addlegendentry{Unitaries tried $= 1$}

\addplot [color=mycolor2, dashed, line width=1.5pt, mark=o, mark options={solid, mycolor2}]
  table[row sep=crcr]{%
1	34.492556293437\\
2	45.7365348815784\\
3	54.3886522731169\\
4	69.6047262375282\\
5	75.2342290568377\\
6	82.139346577929\\
7	85.3506422461756\\
8	85.1430839050099\\
9	93.2896729329201\\
};
\addlegendentry{Unitaries tried $= 10$}

\addplot [color=mycolor3, dashdotted, line width=1.5pt, mark=square, mark options={solid, mycolor3}]
  table[row sep=crcr]{%
1	34.492556293437\\
2	44.8233901413724\\
3	52.7021286457762\\
4	65.8450963333297\\
5	72.6923178920563\\
6	80.4915733282048\\
7	83.7630365172973\\
8	83.3547352395678\\
9	90.4375471701516\\
};
\addlegendentry{Unitaries tried $= 100$}

\addplot [color=mycolor4, dotted, line width=1.5pt, mark=diamond, mark options={solid, mycolor4}]
  table[row sep=crcr]{%
1	34.492556293437\\
2	44.6309372536956\\
3	51.814459388616\\
4	64.4743440374867\\
5	70.514242646608\\
6	78.7414144295891\\
7	82.0072824901731\\
8	81.6174227850374\\
9	88.1712399046043\\
};
\addlegendentry{Unitaries tried $= 1000$}

\end{axis}

\begin{axis}[%
width=5.833in,
height= 0in,
at={(0in,4.875in)},
scale only axis,
clip=false,
xmin=0,
xmax=1,
ymin=0,
ymax=1,
axis line style={draw=none},
ticks=none,
axis x line*=bottom,
axis y line*=left
]
\node[align=center]
at (axis cs:0.5,-0.05) {Bound for existence of best rank $R$ approximation of $R \times R \times R$ tensor};
\end{axis}

\begin{axis}[%
width=1.8 in,
height=1.4 in,
at={(3.5 in, 3.0 in)},
scale only axis,
xmin=1,
xmax=9,
xtick={1,2,3,4,5,6,7,8,9},
xticklabels={{2},{3},{4},{5},{6},{7},{8},{9},{10}},
xlabel style={font=\color{white!15!black}},
xlabel={Rank of tensor},
ymin=30,
ymax=100,
ytick={25,45,65,85},
yticklabels={{25},{45},{65},{85}},
ylabel style={font=\color{white!15!black}},
ylabel={Mean bound plotted in dB},
title style={font=\bfseries},
title={With optimal slice ordering},
axis background/.style={fill=white},
axis x line*=bottom,
axis y line*=left,
legend style={at={(0.97,0.03)}, anchor=south east, legend cell align=left, align=left, fill=none, draw=none,{nodes={scale=0.55, transform shape}}}
]
\addplot [color=mycolor1, line width=1.5pt, mark=triangle, mark options={solid, mycolor1}]
  table[row sep=crcr]{%
1	34.492556293437\\
2	46.7849995523894\\
3	57.3339932452104\\
4	69.5890757231639\\
5	76.0294928592019\\
6	84.0741940022655\\
7	85.7612611877936\\
8	85.4497805470257\\
9	90.6250600683251\\
};
\addlegendentry{Unitaries tried $= 1$}

\addplot [color=mycolor2, dashed, line width=1.5pt, mark=o, mark options={solid, mycolor2}]
  table[row sep=crcr]{%
1	34.492556293437\\
2	45.0324173949087\\
3	53.4661499247206\\
4	66.9010444108548\\
5	72.7393906262198\\
6	80.4160778124498\\
7	83.1343727112435\\
8	82.3377636438497\\
9	88.4179003568528\\
};
\addlegendentry{Unitaries tried $= 10$}

\addplot [color=mycolor3, dashdotted, line width=1.5pt, mark=square, mark options={solid, mycolor3}]
  table[row sep=crcr]{%
1	34.492556293437\\
2	44.6936177133348\\
3	52.4780988651596\\
4	65.2729848817516\\
5	70.6262973784948\\
6	78.9131952618225\\
7	81.3522056294803\\
8	80.3496043661345\\
9	87.2897356948306\\
};
\addlegendentry{Unitaries tried $= 100$}

\addplot [color=mycolor4, dotted, line width=1.5pt, mark=diamond, mark options={solid, mycolor4}]
  table[row sep=crcr]{%
1	34.492556293437\\
2	44.5682102592406\\
3	51.7085063270005\\
4	63.7859509105967\\
5	69.6366781927647\\
6	77.1878942172245\\
7	80.2940441788656\\
8	79.6542566386031\\
9	86.3094273262912\\
};
\addlegendentry{Unitaries tried $= 1000$}

\end{axis}

\end{tikzpicture}%
	\caption{Mean bound for radius of a ball centered at a slice mix invertible rank $R$ tensor of unit Frobenius norm in which best rank $R$ approximations are guaranteed to exist. The minimum SNR needed for existence of a best low rank approximation to be guaranteed increases as $R$ increases from $2$ to $10$. Increasing the number of unitaries tried can notably improve the bound; however, there are diminishing returns as the number of unitaries tried increases. }
	\label{figure:AvgChordalBoundPlot}
\end{figure}
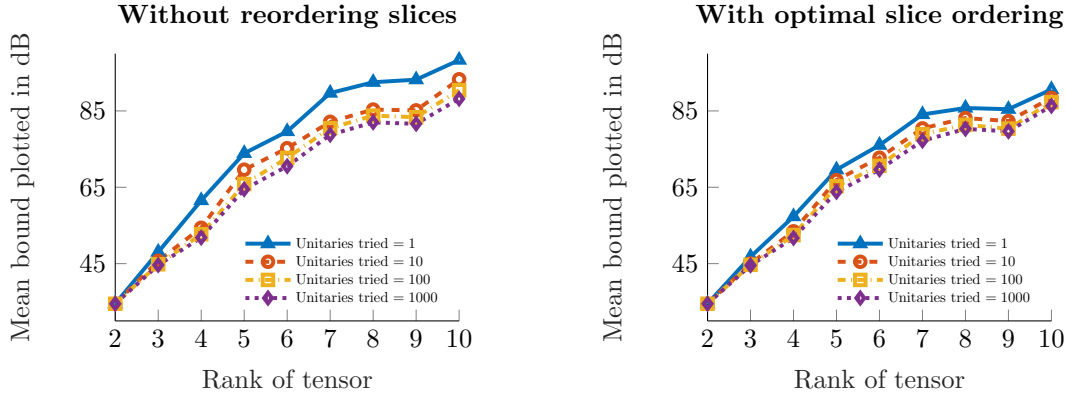 

\subsubsection{Bounds for existence in the case of rank less than dimensions}
\label{sec:existenceImagesBigDim}

We now examine the setting more common in applications where a tensor has rank significantly less than its dimensions. We first note that since the bound obtained from Theorem \ref{theorem:MultiplePencilBound} is a deterministic bound, the bound must take into account a worst case scenario and does not improve as tensor dimensions increase. A worst case scenario achievable for the core of a tensor is always achievable on the full tensor by applying a perturbation directly to the core of the tensor.

Although the deterministic bound given by Theorem \ref{theorem:MultiplePencilBound} does not improve as tensor dimensions grow, {\it the probability of a worst case scenario occurring dramatically decreases.} This is illustrated in Figure \ref{figure:ExistPro} with the following experiment. 

For various choices of $R$ and $I$ and SNR, we randomly generate a rank $R$ tensor $\cT'$ and a perturbation $\cN'$ of size $I \times I \times I$ such that the SNR of $\cT'$ to $\cN'$ is as specified. Then, as is commonly done in practice, we compute an MLSVD of $\cT'+\cN'$ and truncate the MLSVD to obtain a multilinear rank $R \times R \times R$ approximation of $\cT'+\cN'$. We let $\cW$ denote the truncated MLSVD of $\cT'+\cN'$. 

Theorem \ref{theorem:MLSVDExistenceBound} is used to compute a bound\footnote{For each $\cW$, a total of $1000$ randomly generated unitaries are tried when computing this radius. To limit computational complexity, we do not compute the optimal slice orderings. It is expected that either increasing the number of unitaries tried or computing the optimal slice ordering for each unitary would cause the curves in Figure \ref{figure:ExistPro} to shift slightly to the left.} $\epsilon$ such that if there exists a rank $R$ tensor within $\epsilon$ Frobenius distance of $\cW$, then $\cW$ is guaranteed to have a best rank $R$ approximation. We then compute a rank $R$ approximation $\hcT$ of $\cW$ and check if $\|\cW-\hcT\|_\mathrm{F} < \epsilon$, in which case we are able to guarantee that $\cW$ has a best rank $R$ approximation. For each choice of $I$, $R$, and SNR, a total of $10$ trials are performed and the proportion of truncated MLSVDs guaranteed to have a best rank $R$ approximation is reported.

As expected, we observe that the SNR needed for $\cW$ to be guaranteed to have a best rank $R$ approximation greatly decreases as $I$ increases. Additionally, for each choice of $I$ and $R$, we observe a sharp transition from being unable to guarantee the existence of a best rank $R$ approximation of $\cW$ to expecting to be able to guarantee the existence of a best rank $R$ approximation of $\cW$.

\begin{figure}[h!]
\hspace*{-1cm}
%
%
\definecolor{mycolor1}{rgb}{0.00000,0.44700,0.74100}%
\definecolor{mycolor2}{rgb}{0.85000,0.32500,0.09800}%
\definecolor{mycolor3}{rgb}{0.92900,0.69400,0.12500}%
\definecolor{mycolor4}{rgb}{0.49400,0.18400,0.55600}%
\begin{tikzpicture}

\begin{axis}[%
width=5.833in,
height= 0in,
at={(0in,4.875in)},
scale only axis,
clip=false,
xmin=0,
xmax=1,
ymin=0,
ymax=1,
axis line style={draw=none},
ticks=none,
axis x line*=bottom,
axis y line*=left
]
\node[align=center]
at (axis cs:0.5,-0.05) { Proportion of $I \times I \times I$ tensors $\cT+\cN$ with truncated MLSVD \\
 guaranteed to have a best rank $R$ approximation};
\end{axis}

\begin{axis}[%
width= 2.1 in,
height=1.3 in,
at={(0.0 in, 3.0 in)},
scale only axis,
xmin=-40,
xmax=30,
xlabel style={font=\color{white!15!black}},
xlabel={SNR of $\cT$ to $\cN$},
ymin=0,
ymax=1,
ylabel style={font=\color{white!15!black}},
ylabel={Proportion},
axis background/.style={fill=white},
title style={font=\bfseries},
title={Tensor rank $R=4$},
axis x line*=bottom,
axis y line*=left,
legend style={at={(1.05,0.03)}, anchor=south east, legend cell align=left, align=left, fill=none, draw=none,{nodes={scale=0.55, transform shape}}}
]
\addplot [color=mycolor1, line width=1.5pt, mark=triangle, mark options={solid, mycolor1}]
  table[row sep=crcr]{%
6	0\\
8	0\\
10	0.1\\
12	0.1\\
14	0.9\\
16	0.9\\
18	0.9\\
20	1\\
22	1\\
};
\addlegendentry{I = 10}

\addplot [color=mycolor2, dashed, line width=1.5pt, mark=o, mark options={solid, mycolor2}]
  table[row sep=crcr]{%
-8	0\\
-6	0\\
-4	0.2\\
-2	0.3\\
0	0.9\\
2	0.9\\
4	1\\
6	1\\
};
\addlegendentry{I = 20}

\addplot [color=mycolor3, dashdotted, line width=1.5pt, mark=square, mark options={solid, mycolor3}]
  table[row sep=crcr]{%
-26	0\\
-24	0\\
-22	0.3\\
-20	0.8\\
-18	1\\
-16	1\\
};
\addlegendentry{I = 100}

\addplot [color=mycolor4, dotted, line width=1.5pt, mark=diamond, mark options={solid, mycolor4}]
  table[row sep=crcr]{%
-32	0\\
-30	0\\
-28	0\\
-26	1\\
-24	1\\
-22	1\\
};
\addlegendentry{I = 200}

\end{axis}

\begin{axis}[%
width=2.1 in,
height=1.3 in,
at={(3.1 in, 3.0 in)},
scale only axis,
xmin=-20,
xmax=60,
xlabel style={font=\color{white!15!black}},
xlabel={SNR of $\cT$ to $\cN$},
ymin=0,
ymax=1,
ylabel style={font=\color{white!15!black}},
ylabel={Proportion},
axis background/.style={fill=white},
title style={font=\bfseries},
title={Tensor rank $R=10$},
axis x line*=bottom,
axis y line*=left,
legend style={at={(0.75,0.06)}, anchor=south east, legend cell align=left, align=left, fill=none, draw=none,{nodes={scale=0.55, transform shape}}}
]
\addplot [color=mycolor1, line width=1.5pt, mark=triangle, mark options={solid, mycolor1}]
  table[row sep=crcr]{%
38	0\\
40	0\\
42	0.1\\
44	0.2\\
46	0.7\\
48	1\\
50	1\\
};
\addlegendentry{I = 20}

\addplot [color=mycolor2, dashed, line width=1.5pt, mark=o, mark options={solid, mycolor2}]
  table[row sep=crcr]{%
10	0\\
12	0\\
14	0.8\\
16	1\\
18	1\\
};
\addlegendentry{I = 100}

\addplot [color=mycolor3, dashdotted, line width=1.5pt, mark=square, mark options={solid, mycolor3}]
  table[row sep=crcr]{%
0	0\\
2	0.2\\
4	0.8\\
6	1\\
8	1\\
};
\addlegendentry{I = 200}

\addplot [color=mycolor4, dotted, line width=1.5pt, mark=diamond, mark options={solid, mycolor4}]
  table[row sep=crcr]{%
-10	0\\
-8	0.2\\
-6	0.7\\
-4	1\\
-2	1\\
};
\addlegendentry{I = 400}

\end{axis}

\end{tikzpicture}%
	\caption{Proportion of $I \times I \times I$ tensors $\cT'+\cN'$ for which the truncated MLSVD $\cW$ of size $R \times R \times R$ is guaranteed to have a best rank $R$ approximation. The necessary SNR decreases greatly as tensor dimensions grow. For each choice of $I$ and $R$, a sharp transition point is observed. When the SNR exceeds the transition point, the truncated MLSVD is expected to have a best rank $R$ approximation.}
	\label{figure:ExistPro}
\end{figure}
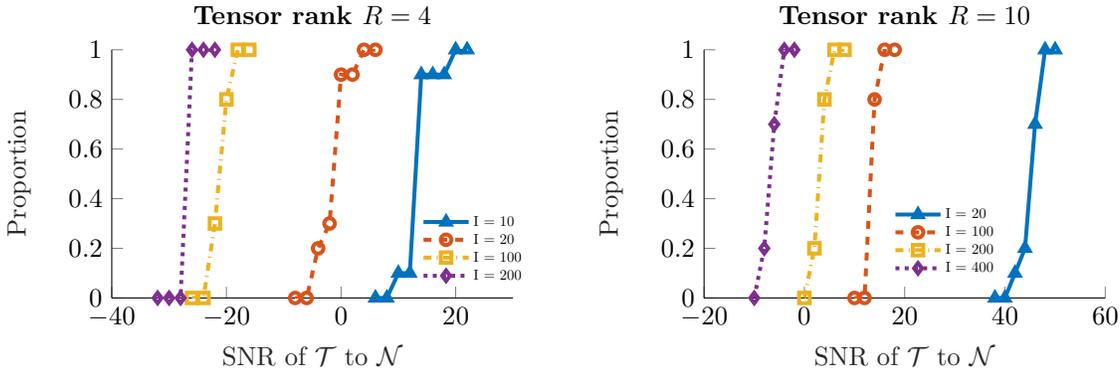

\subsection{0 dB SNR i.i.d noise is not 0 dB SNR noise on the core}
\label{sec:TensorProcrustesExperiments}

In this section we perform an experiment which is intended to help answer the following question: ``Given a tensor $\cT'$ and a perturbation $\cN'$, what is the expected distance between a pair of Procrustes cores computed for $\cT'$ and a best border rank $R$ approximation to $\cT'+\cN'$?" As we now argue, this is a critical measure of distance when considering the existence of a best rank $R$ approximation of $\cT'+\cN'$.

Recall the main idea in this article for determining if a tensor $\cT'+\cN'$ has a best rank $R$ approximation: The tensor $\cT'+\cN'$ is guaranteed to have a best border rank $R$ approximation $\hcT'$, so we need only verify that $\hcT'$ has rank $R$. To determine if $\hcT'$ has rank $R$, we check if $\hcT'$ lies in a neighborhood around $\cT'$ whose radius is determined by Theorem \ref{theorem:MultiplePencilBound}.

Moreover, since tensor (border) rank is invariant under modal unitary multiplication, it is sufficient to check that if one computes Procrustes cores $\cT$ and $\hcT$ for the pair $\cT',\hcT'$, then $\hcT$ lies in the neighborhood around $\cT$ given by Theorem \ref{theorem:MultiplePencilBound}. The key point is that \textit{the existence of a best rank $R$ approximation of $\cT'+\cN'$ can be determined by checking the distance between $\cT$ and $\hcT$.}

The experiment shown in Figure \ref{figure:TPEcombo} is performed as follows.  For various choices of $I$ and $R$ we randomly generate rank $R$ tensors $\cT'$ of size $I \times I \times I$ with unit Frobenius norm. For each tensor we generate a noise tensor $\cN'$ and scale $\cN'$ so that the SNR of $\cT'$ to $\cN'$ is as specified. After generating $\cT'$ and $\cN'$, we compute a rank $R$ approximation\footnote{For the purpose of this experiment, it is not necessary to compute a best rank $R$ approximation.} $\hcT'$ of $\cT'+\cN'$ and  the distance $\|\cT'-\hcT'\|_\mathrm{F}$ between the tensors $ \hcT'$ and $\cT'$. We then compute Procrustes cores $\cT$ and $\hcT$ of $\cT'$ and $\hcT'$, respectively, for which it is guaranteed that $\|\cT -\hcT\|_\mathrm{F} \leq \|\cT' - \hcT' \|_\mathrm{F}$ and record the distance $\|\cT -\hcT\|_\mathrm{F}$ between the cores. For each choice of $I$ and $R$, a total of $10$ trials are performed and the mean distance between the relevant pairs at each step is reported.

Figure \ref{figure:TPEcombo} illustrates that the distance between the core tensors $\cT$ and $\hcT$ is significantly smaller than the distance between $\cT'$ and the perturbed tensor $\cT'+\cN'$. We observe that the distance between $\cT'$ and $\hcT'$ is significantly less than that between $\cT'$ and $\cT'+\cN'$, and that the distance between $\cT$ and $\hcT$ is even smaller still. Furthermore, consistent with Figure \ref{figure:ExistPro}, this experiment shows that when $R <\!\!< I$, issues of nonexistence of a best rank $R$ approximation are unlikely to occur (given our experimental setup of i.i.d factors and i.i.d noise) unless the SNR between $\cT'$ and $\cN'$ is significantly lower than suggested by Figure \ref{figure:AvgChordalBoundPlot}.

\begin{figure}[h!]
	\scalebox{.8}{
%
%
\definecolor{mycolor1}{rgb}{0.00000,0.44700,0.74100}%
\definecolor{mycolor2}{rgb}{0.85000,0.32500,0.09800}%
\definecolor{mycolor3}{rgb}{0.92900,0.69400,0.12500}%
\definecolor{mycolor4}{rgb}{0.49400,0.18400,0.55600}%
\begin{tikzpicture}

\begin{axis}[%
width=2.052in,
height=1.584in,
at={(4.027in,3.209in)},
scale only axis,
xmin=1,
xmax=3,
xtick={1,2,3,4,5},
xticklabels={{{ {$\cT'$ and $\cT'+\cN'$}}},{{{$\cT'$ and $\hcT'$}}},{{{$\cT$ and $\hcT$} }}},
xticklabel style={yshift=-3 pt},
xlabel style={font=\color{white!15!black},yshift=-2 pt},
xlabel={{ Distance between}},
ymode=log,
ymin=0.00504748639517362,
ymax=1,
yminorticks=true,
ylabel style={font=\color{white!15!black}},
ylabel={{{Mean distance}}},
axis background/.style={fill=white},
title style={font=\bfseries},
title={Initial SNR = 0},
axis x line*=bottom,
axis y line*=left,
legend style={at={(0.02,0.03)}, anchor=south west, legend cell align=left, align=left, fill=none, draw=none,{nodes={scale=0.5, transform shape}} }
]
\addplot [color=mycolor1, line width=1.5pt, mark=triangle, mark options={solid, mycolor1}]
  table[row sep=crcr]{%
1	1\\
2	0.172775369396928\\
3	0.0566256326463498\\
};
\addlegendentry{R = 4, I = 20}

\addplot [color=mycolor2, dashed, line width=1.5pt, mark=o, mark options={solid, mycolor2}]
  table[row sep=crcr]{%
1	1\\
2	0.0344065188640292\\
3	0.00504748639517362\\
};
\addlegendentry{R = 4, I = 100}

\addplot [color=mycolor3, dashdotted, line width=1.5pt, mark=square, mark options={solid, mycolor3}]
  table[row sep=crcr]{%
1	1\\
2	0.290294370271169\\
3	0.155609014214643\\
};
\addlegendentry{R = 10, I = 20}

\addplot [color=mycolor4, dotted, line width=1.5pt, mark=diamond, mark options={solid, mycolor4}]
  table[row sep=crcr]{%
1	1\\
2	0.0547672377561384\\
3	0.0120553348056223\\
};
\addlegendentry{R = 10, I = 100}

\end{axis}

\begin{axis}[%
width=2.052in,
height=1.584in,
at={(0.558in,3.209in)},
scale only axis,
xmin=1,
xmax=3,
xtick={1,2,3,4,5},
xticklabels={{{ {$\cT'$ and $\cT'+\cN'$}}},{{{$\cT'$ and $\hcT'$}}},{{{$\cT$ and $\hcT$} }}},
xticklabel style={yshift=-3 pt},
xlabel style={font=\color{white!15!black},yshift=-2 pt},
xlabel={{ Distance between}},
ymode=log,
ymin=0.000457686147334849,
ymax=0.1,
yminorticks=true,
ylabel={{{Mean distance}}},
axis background/.style={fill=white},
title style={font=\bfseries},
title={Initial SNR = 20},
axis x line*=bottom,
axis y line*=left,
legend style={at={(0.02,0.03)}, anchor=south west, legend cell align=left, align=left, fill=none, draw=none, {nodes={scale=0.5, transform shape}} }
]
\addplot [color=mycolor1, line width=1.5pt, mark=triangle, mark options={solid, mycolor1}]
  table[row sep=crcr]{%
1	0.1\\
2	0.0170360418127112\\
3	0.00488899919075866\\
};
\addlegendentry{R = 4, I = 20}

\addplot [color=mycolor2, dashed, line width=1.5pt, mark=o, mark options={solid, mycolor2}]
  table[row sep=crcr]{%
1	0.1\\
2	0.00340308000322454\\
3	0.000457686147334849\\
};
\addlegendentry{R = 4, I = 100}

\addplot [color=mycolor3, dashdotted, line width=1.5pt, mark=square, mark options={solid, mycolor3}]
  table[row sep=crcr]{%
1	0.1\\
2	0.0269256662609456\\
3	0.0134925840525876\\
};
\addlegendentry{R = 10, I = 20}

\addplot [color=mycolor4, dotted, line width=1.5pt, mark=diamond, mark options={solid, mycolor4}]
  table[row sep=crcr]{%
1	0.1\\
2	0.00544607188071547\\
3	0.00122662144691036\\
};
\addlegendentry{R = 10, I = 100}

\end{axis}

\begin{axis}[%
width=2.152in,
height=1.584in,
at={(0.558in,0.7in)},
scale only axis,
xmin=1,
xmax=3,
xtick={1,2,3,4,5},
xticklabels={{{ {$\cT'$ and $\cT'+\cN'$}}},{{{ $\cT'$ and $\hcT'$}}},{{{ $\cT$ and $\hcT$} }}},
xticklabel style={yshift=-3 pt},
xlabel style={font=\color{white!15!black},yshift=-2 pt},
xlabel={{ Distance between}},
ymode=log,
ymin=0.1,
ymax=10,
yminorticks=true,
ylabel style={font=\color{white!15!black}},
ylabel={{{Mean distance}}},
axis background/.style={fill=white},
title style={font=\bfseries},
title={Initial SNR = -20},
axis x line*=bottom,
axis y line*=left,
legend style={at={(0.02,0.03)}, anchor=south west, legend cell align=left, align=left, fill=none, draw=none, {nodes={scale=0.5, transform shape}} }
]
\addplot [color=mycolor1, line width=1.5pt, mark=triangle, mark options={solid, mycolor1}]
  table[row sep=crcr]{%
1	10\\
2	2.75024169380598\\
3	2.09440752821611\\
};
\addlegendentry{R = 4, I = 20}

\addplot [color=mycolor2, dashed, line width=1.5pt, mark=o, mark options={solid, mycolor2}]
  table[row sep=crcr]{%
1	10\\
2	0.440162697903075\\
3	0.118487772943173\\
};
\addlegendentry{R = 4, I = 100}

\addplot [color=mycolor3, dashdotted, line width=1.5pt, mark=square, mark options={solid, mycolor3}]
  table[row sep=crcr]{%
1	10\\
2	4.11617747317861\\
3	3.69176336897849\\
};
\addlegendentry{R = 10, I = 20}

\addplot [color=mycolor4, dotted, line width=1.5pt, mark=diamond, mark options={solid, mycolor4}]
  table[row sep=crcr]{%
1	10\\
2	0.850023079198715\\
3	0.419945883161676\\
};
\addlegendentry{R = 10, I = 100}

\end{axis}

\begin{axis}[%
width=2.052in,
height=1.584in,
at={(4.027in,0.7in)},
scale only axis,
xmin=1,
xmax=3,
xtick={1,2,3,4,5},
xticklabels={{{ { $\cT'$ and $\cT'+\cN'$}}},{{{$\cT'$ and $\hcT'$}}},{{{ $\cT$ and $\hcT$} }}},
xticklabel style={yshift=-3 pt},
xlabel style={font=\color{white!15!black},yshift=-2 pt},
xlabel={{ Distance between}},
ymode=log,
ymin=4.808862906224,
ymax=100,
yminorticks=true,
ylabel style={font=\color{white!15!black}},
ylabel={{{ Mean distance}}},
axis background/.style={fill=white},
title style={font=\bfseries},
title={Initial SNR = -40},
axis x line*=bottom,
axis y line*=left,
legend style={at={(0.02,0.03)}, anchor=south west, legend cell align=left, align=left, fill=none, draw=none, {nodes={scale=0.5, transform shape}} }
]
\addplot [color=mycolor1, line width=1.5pt, mark=triangle, mark options={solid, mycolor1}]
  table[row sep=crcr]{%
1	100\\
2	26.2275944175073\\
3	25.5793324727459\\
};
\addlegendentry{R = 4, I = 20}

\addplot [color=mycolor2, dashed, line width=1.5pt, mark=o, mark options={solid, mycolor2}]
  table[row sep=crcr]{%
1	100\\
2	5.52810035048934\\
3	4.808862906224\\
};
\addlegendentry{R = 4, I = 100}

\addplot [color=mycolor3, dashdotted, line width=1.5pt, mark=square, mark options={solid, mycolor3}]
  table[row sep=crcr]{%
1	100\\
2	40.4208784839827\\
3	39.9807467864287\\
};
\addlegendentry{R = 10, I = 20}

\addplot [color=mycolor4, dotted, line width=1.5pt, mark=diamond, mark options={solid, mycolor4}]
  table[row sep=crcr]{%
1	100\\
2	8.65118262534741\\
3	8.13881154778399\\
};
\addlegendentry{R = 10, I = 100}

\end{axis}

\begin{axis}[%
width=5.833in,
height= 0in,
at={(0in,5.375in)},
scale only axis,
clip=false,
xmin=0,
xmax=1,
ymin=0,
ymax=1,
axis line style={draw=none},
ticks=none,
axis x line*=bottom,
axis y line*=left
]
\node[align=center]
at (axis cs:0.5,-0.05) {Relative Distance between $\cT$ and measured tensor, between $\cT$ \\ and approximation $\hcT$, and between Procrustes cores};
\end{axis}
\end{tikzpicture}
		\label{figure:TPEcombo}
	\caption{Distance between $\cT'$ and $\cT'+\cN'$, between $\cT'$ and $\hcT'$, and between $\cT$ and $\hcT$. The mean distance between $\cT$ and $\hcT$ is observed to be significantly smaller than that between $\cT'$ and $\cT'+\cN'$, and the difference between these distances increases as the ratio of $I$ to $R$ increases. The mean distance between $\cT$ and $\hcT$ is smaller than that between $\cT'$ and $\hcT'$; however, the order of magnitude of the gap depends on the initial SNR of $\cT'$ to $\cN'$ with smaller gaps when the initial SNR is lower.}
\end{figure}

\section{Conclusion}
We have given a deterministic bound on the radius of an open Frobenius norm ball centered at a simple rank $R$ tensor in which best rank $R$ approximations are guaranteed to exist and in which every rank $R$ tensor has a unique CPD. This illustrates the existence of a neighborhood of ``mathematical truth" around the signal portion of tensors occurring in application and gives a quantitative explanation for the numerical success of the CPD in practical settings. In addition we solved a tensor Procrustes problem which provides a method for computing orthogonal compressions for a pair of tensors.

The bound for existence of best rank $R$ approximations was computed by connecting the CPD to a JGE value decomposition for a tuple of matrices. Several basic results were established for JGE values, including an examination of algebraic and geometric multiplicities. We showed that a slice mix invertible tensor of size $R \times R \times K$ which has  border $\K$-rank $R$ and $\K$-rank strictly greater than $R$ must be defective in the sense of algebraic and geometric multiplicities for JGE values. Furthermore, we developed perturbation theoretic bounds which may be used to guarantee that tensors in a neighborhood of a given simple rank $R$ tensor are simple, thus giving a neighborhood in which every tensor has a best rank $R$ approximation.

\section{Acknowledgements} The authors thank the anonymous referees for their comments which helped improve clarity and exposition in the article.

\renewcommand\thesection{\ifcase\value{section}S\ or S.1\else S\fi}

\section{Supplementary Proofs}

The supplementary materials give the proofs of Theorem \ref{theorem:MLSVDExistenceBound}, Lemma \ref{lemma:ProcrustesIsometries}, and Theorem \ref{theorem:TensBauerFike} in the general case.

\subsection{Proof of Theorem \ref{theorem:MLSVDExistenceBound}}

As previously mentioned, the proof of Theorem \ref{theorem:MLSVDExistenceBound} is nearly identical to that of Theorem \ref{theorem:MultiplePencilBound}, with all notable differences occurring at the start of the proof. 

\begin{proof} Let $\cM',\cW',\cW$ and $R$ and $K$ be as in the statement of the theorem. Assume there exists some $\K$-rank $R$ tensor $\tcT'$ such that
\[
\|\cM-\tcT'\|_\mathrm{F} < \epsilon - \|\cM' - \cW'\|_\mathrm{F}
\]
and let $\hcT'$ be a best border $\K$-rank $R$ approximation of $\cM'$ in the Frobenius norm. Since $\tcT'$ has rank $R$ by assumption, we must have
\[
\|\cM'-\hcT' \|_\mathrm{F} \leq \|\cM'-\tcT'\|_\mathrm{F}  < \epsilon - \|\cM' - \cW'\|_\mathrm{F},
\]
hence
\[
\|\cW'-\hcT'\|_\mathrm{F} < \epsilon. 
\]

 Note that projecting the mode three fibers of $\hcT'$ to the subspace spanned by the mode three fibers of $\cM'$ neither increases the Frobenius distance between $\hcT'$ and $\cM'$ nor the rank of $\hcT'$, thus we can assume $R_3(\hcT') \leq R_3 (\cM')$. Recalling that $K=\min\{(\cM'),R\}$, since $\hcT'$ has border $\K$-rank $R$, the multilinear rank of $\hcT'$ is at most $(R,R,K)$. As consequence of Theorem \ref{theorem:TensorProcrustes}, there exists an orthogonal compression $\hcT \in \K^{R \times R \times K}$ of $\hcT'$ such that
\[
\|\cW-\hcT\|_\mathrm{F} \leq \|\cW'-\hcT'\|_\mathrm{F}  < \epsilon
\]
To show $\hcT'$ has $\K$-rank $R$ is sufficient to show that $\hcT$ has $\K$-rank $R$.

Now let $\bU \in \K^{K \times K}$ be a unitary and set $\cS=\cW\cdot_3 \bU$ and $\hcS=\hcT\cdot_3 \bU$. Since modal unitary multiplications preserve the Frobenius norm of a tensor we have $\|\cS- \hcS\|_\mathrm{F}  = \| \cW -\hcT \|_\mathrm{F} < \epsilon $. Therefore, for some $i=1,\dots, \lfloor K/2 \rfloor$ we must have that
\[
\|(\bS_{2i-1}, \bS_{2i})-({ \hat{\bS}}_{2i-1}, { \hat{\bS}}_{2i})\|_\mathrm{F} < \epsilon_i,
\]
as otherwise would imply
\[
\| \cS -  \hcS\|_\mathrm{F}^2 \geq  \sum_{k=1}^{\lfloor K/2 \rfloor} \|(\bS_{2k-1},\bS_{2k})-({\hat{\bS}}_{2k-1},{\hat{\bS}}_{2k})\|^2_\mathrm{F} \geq   \sum_{k=1}^{\lfloor K/2 \rfloor} \epsilon_k ^2=  \epsilon^2 .
\]

In particular, from our choice of $\epsilon_i$ the subpencil $({\hat{\bS}}_{2i-1},{\hat{\bS}}_{2i})$ must be  simple. Using Proposition \ref{proposition:DefectiveSubpencils} allows us to conclude that the tensor ${\hcT}$ is simple. It follows from Theorem \ref{theorem:rankvsmult} \eqref{it:NonderogatoryImpliesRankR} that $\hcT$ has rank $R$. 

The fact that $\hcT$ has a unique CPD is then a straightforward consequence of Kruskal's condition \cite{K77}. In addition, the fact that the set of tensors $\cM'$ which satisfy the assumptions of the theorem but do not have a unique best rank $R$ approximation has measure zero is an immediate consequence of \cite{QML19} when working over $\C$ and of \cite[Corollary 18]{FO14} when working over $\R$.
\end{proof}
	
\subsection{Proof of Lemma \ref{lemma:ProcrustesIsometries}}


\begin{proof}
Consider the orthogonal Procrustes problem: 
\beq
\label{eq:Procrustes}
\min_{\Psi \in O(I_1)} \|\bG- \Psi \bH\|_\mathrm{F}.
\eeq
where $O(I_1)$ denotes the group of $I_1 \times I_1$ unitary matrices. \cite{S66} shows that $\bU  \in O(I_1)$ is a minimizer of \eqref{eq:Procrustes} if and only if $\bU$ can be written $\bU=\bV \bZ^\mathrm{H}$ where $\bV ,\bZ \in O( I_1)$ are unitary matrices such that 
\beq
\label{eq:ProcrustesSVD}
\bG \bH^\mathrm{H}=\bV \mathbf{\Sigma} \bZ^\mathrm{H}
\eeq
gives a singular value decomposition for $\bG \bH^\mathrm{H}$ for an appropriate choice of $\mathbf{\Sigma}$.

It is straightforward to show that the unitary matrices $\bV$ and $\bZ$ in equation \eqref{eq:ProcrustesSVD} may be chosen so that
\[
\bV = (\bV_1 \ \bV_2)  \mathand \bZ = (\bZ_1 \ \bZ_2)
\]
where $\bV_1,{ \bZ_1} \in \K^{I_1 \times R_\bG}$ and $\bV_2,\bZ_2 \in \K^{I_1 \times (I_1-R_\bG)}$ are column-wise orthonormal matrices satisfying
\[
\ran \bV_1 = \ran \bG \mathand \ran \bZ_1 \supseteq  \ran\bH
\]
\[
\ran \bV_2  = (\ran \bG)^\perp \mathand \ran \bZ_2 \subseteq (\ran \bH)^\perp.
\]
Using this choice of $\bV$ and $\bZ$ we have $\bZ_2^\mathrm{H} \bH=\b0$, hence
\[
\bV \bZ^\mathrm{H} \bH= (\bV_1 \ \bV_2  ) (\bZ_1 \ \bZ_2)^\mathrm{H} \bH= \bV_1 \bZ_1^\mathrm{H} \bH.
\]
It follows that $\ran \bM \supseteq \ran \bV \bZ^\mathrm{H} \bN$ which completes the first part of the proof.

To prove the second claim let $\bU_\bM \in \R^{I_1 \times R_\bG}$ be any column-wise orthonormal matrix mapping onto $\ran \bG$, and set $\bU_\bN = \bZ \bV^\mathrm{H} \bU_\bM$ with $\bV$ and $\bZ$ as above. Then we have
\[
\|\bU_\bM^\mathrm{H} { \bM}- \bU_\bN^\mathrm{H} \bN \|_\mathrm{F} \leq \| \bU_\bM^\mathrm{H} \|_2 \| \bM-  \bV\bZ^\mathrm{H} \bN\|_\mathrm{F}  \leq \|\bM-\bN\|_\mathrm{F},
\]
To see that $\ran \bU_\bN \supseteq \ran \bN$, we use $\bV_2^\mathrm{H} \bM=\b0$ to find
\[
\bZ \bV^\mathrm{H} \bU_\bM = (\bZ_1 \ \bZ_2 ) (\bV_1 \ \bV_2)^\mathrm{H} \bU_\bM=\bZ_1 \bV_1^\mathrm{H} \bU_\bM.
\]
Since $\bU_\bM \in \K^{I_1 \times R_\bG}$ and  $\bV_1 \in \R^{I_1 \times R_\bG}$ are column-wise orthonormal matrices with the same range, it follows that $\bV_1^{\mathrm{H}} \bU_\bM$ is an invertible map on $\K^{R_\bM}$. Therefore
\[
\ran \bU_\bN= \ran \bZ_1 \bV_{1}^\mathrm{H} \bU_\bM = \ran  \bZ_1 \supseteq \ran \bN
\]
as claimed. \ \end{proof}

\subsection{Proof of Theorem \ref{theorem:TensBauerFike} in the general case}

We now give the proof of Theorem \ref{theorem:TensBauerFike} in the general setting. For clarity, the difference between the proof we now give and the proof in the main article is that we now only assume that $\cW$ has some JGE value with nonzero geometric multiplicity rather than assuming that $\cW$ has $\K$-rank $R$ and is slice mix invertible. This more general setting for example  be applies to slice mix invertible $R \times R \times K$ tensors which have border $\K$-rank $R$ but rank strictly greater than $R$.

\begin{proof}

We begin by proving the first inequality in equation \eqref{eq:JGEvaluebound}. To this end, let $\scrW \subset \K^K$ be a JGE value of $\cW$ and let $\bomega \in \scrW$ and $\bgam \in \scrW^\perp$ be vectors with unit two norm. Let $\bx \in \K^R$ be a JGE vector of $\cW$ corresponding to $\scrW$ and let $\by \in \K^R$ be a vector such that the tuple $(\cW,\bomega,\bx,\by)$ satisfies equation (2.8). Set $\bv = \frac{\bB^{\mathrm{T}}\bx}{\|\bB^{\mathrm{T}}\bx\|_2}$ so that $\|\bv\|_2=1$. Additionally, for $k=1,\dots,K$ let $\bE_k$ denote the $k$th frontal slice of $\cE$. 

With this setup we have
\[
\b0= \langle \bomega,\bgam \rangle \bA^{-1} \by = \sum_{k=1}^K \bar{\gamma}_k \bA^{-1} \bW_k \bB^{-\mathrm{T}} \bv = \sum_{k=1}^K \bar{\gamma}_k D_k (\bC) \bv + \sum_{k=1}^K \bar{\gamma}_k \bA^{-1} \bE_k \bB^{-\mathrm{T}} \bv. 
\]
It follows that 
\[
\sum_{k=1}^K \bar{\gamma}_k D_k (\bC) \bv = - \sum_{k=1}^K \bar{\gamma}_k \bA^{-1} \bE_k \bB^{-\mathrm{T}} \bv. 
\]
Furthermore we have
\begin{align}
\label{eq:JGEspNormBound} \max_{\substack{\bgam \in \scrW^\perp \\ \|\bgam \|_2=1}} \Big\|\sum_{k=1}^K \bar{\gamma}_k D_k (\bC) \bv\Big\|_2 
&\leq 
\max_{\substack{\|\bu\|_2=1 \\ \|\bz\|_2=1 }} \max_{\substack{\bgam \in \scrW^\perp \\ \|\bgam \|_2=1 }} \Big| \sum_{k=1}^K \bar{\gamma}_k  \bu^\mathrm{T}\bA^{-1} \bE_k \bB^{-\mathrm{T}} \bz \Big| 
\\ \nonumber
& =  \max_{\substack{\|\bw\|_2=1 \\ \|\bz\|_2=1  \\ \|\bgam\|_2=1 }}  \Big| \Big( \cE \cdot_1 \bA^{-1} \cdot_2 \bB^{-1} \cdot_3 \bP_{\scrW^\perp} \Big) \cdot_1 \bu^\mathrm{T} \cdot_2 \bz^\mathrm{T} \cdot_3 \bgam^\mathrm{T} \Big| 
\\ \nonumber
& = \Big\| \cE \cdot_1 \bA^{-1} \cdot_2 \bB^{-1} \cdot_3 \bP_{\scrW^\perp} \Big\|_{\mathrm{sp}}.
\end{align}
In the $K=2$ case, one may then follow the proof of \cite[Chapter VI, Theorem 2.7]{SS90} to obtain
\[
\min_k \chi (\scrL_k, \scrW) \leq \Big\| \cE \cdot_1 \bA^{-1} \cdot_2 \bB^{-1} \cdot_3 \bP_{\scrW^\perp} \Big\|_{\mathrm{sp}}.
\]

We next show that there exists some JGE value $\scrL$ of $\cT$ such that
\beq
\label{eq:JGEvalueIntermediateBound}
\frac{\chi(\scrL,\scrW)}{\sqrt{R}} \leq \max_{\substack{\bgam \in \scrW^\perp \\ \|\bgam \|_2=1}}  \bigg\|\sum_{k=1}^K \bbgam_k D_k (\bC) \bv \bigg\|_2,
\eeq
from which the first inequality in equation \eqref{eq:JGEvaluebound} will follow. For $r=1, \dots, R$, let $\bc_r= (c_{1 r},\dots, c_{K r})$ be the $r$th column of $\bC$. Then $c_{k r}$ is the $r$th diagonal entry of $D_k (\bC)$ for each $k$, and, as shown by Theorem \ref{theorem:rankvsmult}, the set $\{\spann (\bc_r)\}_{r=1}^R$ is the spectrum of $\cT$. By way of equation \eqref{eq:ChordalEqs}, to  prove that equation \eqref{eq:JGEvalueIntermediateBound} holds it is sufficient to show that there is an index $r_*$ such that
\beq
\label{eq:svBoundGoal}
 \max_{\substack{\bgam \in \scrW^\perp \\ \|\bgam \|_2=1}}\frac{ | \langle \bc_{r_*}, \bgam \rangle | }{\sqrt{R}} \leq \max_{\substack{\bgam \in \scrW^\perp \\ \|\bgam \|_2=1}} \bigg\|\sum_{k=1}^K \bbgam_k D_k (\bC) \bv \bigg\|_2 .
\eeq

To this end let $\be_r$ be the $r$th standard basis vector in $\K^R$ and observe
\beq
\label{eq:BoundOnBasisVec}
\sum_{k=1}^K \bar{\gamma}_k D_k (\bC) \be_r   =\sum_{k=1}^K \bar{\gamma}_k c_{kr} \be_r =   \left(\sum_{k=1}^K \bar{\gamma}_k c_{kr}\right) \be_r  = \langle  \bc_r, \bgam \rangle \be_r. 
\eeq
Decompose $\bv=\sum_{r=1}^R v_r \be_r$ and note that $\|\bv\|_2=1$ implies $\sum_{r=1}^R |v_r|^2=1$. Then for each fixed $\bgam \in \scrW^\perp$ with $\|\bgam\|_2=1$ we have
\[
\begin{array}{rcccc}

\|\sum_{k=1}^K \bar{\gamma}_k D_k (\bC) \bv\|_2 &=&  \left\| \sum_{k=1}^K \bar{\gamma}_k D_k (\bC) \left(\sum_{r=1}^R v_r \be_r \right) \right\|_2 & = & \left\| \sum_{r=1}^R v_r \left( \sum_{k=1}^K \bar{\gamma}_k D_k (\bC) \be_r\right) \right\|_2 \\
&=& \|\sum_{r=1}^R v_r \langle \bc_r, \bgam \rangle \be_r \|_2 &=& \sqrt{\sum_{r=1}^R |v_r|^2 | \langle  \bc_r, \bgam \rangle|^2}.
\end{array}
\]

Now let $r_*$ be an index such that $|v_{r_*}|\geq |v_r|$ for all $r=1,\dots, R$. Since $\|v\|_2=1$, we must have $|v_{r_*}|\geq \frac{1}{\sqrt{R}}$. Thus we have
\[
\bigg\|\sum_{k=1}^K \bar{\gamma}_k D_k (\bC) \bv\bigg\|_2=\sqrt{\sum_{r=1}^R |v_r|^2 | \langle  \bc_r, \bgam \rangle|^2}\geq |v_{r_*} | |\langle \bc_{r_*},\bgam \rangle | \geq \frac{|\langle \bc_{r_*}, \bgam \rangle |}{\sqrt{R}}.
\]
Noting that the above inequality holds for all $\bgam \in \scrW^\perp$ with $\|\bgam\|_2=1$ gives equation \eqref{eq:svBoundGoal} from which equation \eqref{eq:JGEvalueIntermediateBound} follows. The desired inequality then follows by combining inequalities \eqref{eq:JGEspNormBound} and \eqref{eq:JGEvalueIntermediateBound}.

To prove the second inequality in equation \eqref{eq:JGEvaluebound} observe that
\[
\spnorm{\cE \cdot_1 \bA^{-1} \cdot_2 \bB^{-1} \cdot_3 \bP_{\scrW^\perp}}
 \leq   \|\bA^{-1}\|_2 \|\bB^{-1}\|_2 \max_{\substack{\bgam \in \scrW^\perp \\ \|\bgam \|_2=1 }} \left\|\sum _{k=1}^K \bar{\gamma}_k \bE_k \right\|_2    \leq  \frac{ \spnorm{{ \cE}}}{\sigma_{\min} (\bA) \sigma_{\min} (\bB)}
\]
from which the result follows.

It remains to show that the coefficient $\sqrt{R}$ is not necessary in the special case where $\cW$ is slice mix invertible with $\K$-rank $R$ and CPD $[\![\bA_\cW,\bB_\cW,\bC_\cW]\!]$ where $\bA_\cW=\bA$ or $\bB_\cW=\bB$. It is straightforward to check that the spectral variation $\sv [\cT,\cW]$ is invariant under a simultaneous permutation of the first and second mode of $\cT$ and $\cW$, so it is sufficient to consider the case $\bB_\cW=\bB$. 

In this case, Theorem \ref{theorem:rankvsmult} shows that the JGE vectors of both $\cT$ and $\cW$ are given by the columns of $\bB^{-\mathrm{T}}$, hence the vector $\bv$ appearing the proof above is simply equal to $\be_r$ for some index $r \in 1,\dots,R$.  Using equation \eqref{eq:JGEspNormBound} together with equation \eqref{eq:BoundOnBasisVec} then shows that we have 
\[
\chi(\scrL_r,\scrW)=\max_{\substack{\bgam \in \scrW^\perp \\ \|\bgam \|_2=1}} |\langle \bc_r, \bgam\rangle| \leq  \Big\| \cE \cdot_1 \bA^{-1} \cdot_2 \bB^{-1} \cdot_3 \bP_{\scrW^\perp} \Big\|_{\mathrm{sp}}  \leq   \frac{ \spnorm{{ \cE}}}{\sigma_{\min} (\bA) \sigma_{\min} (\bB)},
\]
as claimed.
\end{proof}


\bibliographystyle{siamplain}
\bibliography{references}

\begin{thebibliography}{10}

\bibitem{Aba80}
{\sc T.~{Abatzoglou}}, {\em Unique best approximation from a ${C}^2$-manifold
  in {H}ilbert space}, Pacific Jounal of Mathematics, 87 (1980), pp.~233--244.

\bibitem{AGHKT14}
{\sc A.~Anandkumar, R.~Ge, D.~Hsu, S.~M. Kakade, and M.~Telgarsky}, {\em Tensor
  decompositions for learning latent variable models}, J. Mach. Learn. Res., 15
  (2014), pp.~2773--2832.

\bibitem{B13}
{\sc G.~Bergqvist}, {\em Exact probabilities for typical ranks of {$2 \times 2
  \times 2$} and {$3 \times 3 \times 2$} tensors}, Linear Algebra Appl.,
  (2013), pp.~663--667.

\bibitem{Cetal15}
{\sc A.~Cichocki, D.~P. Mandic, L.~De~Lathauwer, G.~Zhou, Q.~Zhao, C.~F.
  Caiafa, and A.-H. Phan}, {\em Tensor decompositions for signal processing
  applications: {F}rom two-way to multiway component analysis}, {IEEE} Signal
  Process. Mag., 32 (2015), pp.~145--163.

\bibitem{CJ10}
{\sc P.~Comon and C.~Jutten}, {\em Handbook of blind source separation:
  {I}ndependent component analysis and applications}, Academic press, 2009.

\bibitem{DDV00b}
{\sc L.~De~Lathauwer, B.~De~Moor, and J.~Vandewalle}, {\em A multilinear
  singular value decomposition}, {SIAM} J. Matrix Anal. Appl., 21 (2000),
  pp.~1253--1278.

\bibitem{DDV00a}
{\sc L.~De~Lathauwer, B.~D. Moor, and J.~Vandewalle}, {\em An introduction to
  independent component analysis}, J. Chemometrics, 14 (2000), pp.~123--149.

\bibitem{SL08}
{\sc V.~de~{Silva} and L.~{Lim}}, {\em Tensor rank and the ill-posedness of the
  best low-rank approximation problem}, SIAM J. Matrix Anal. \& Appl., 30
  (2008), pp.~1084--1127.

\bibitem{DK87}
{\sc J.~W. Demmel and B.~Kågström}, {\em Computing stable eigendecompositions
  of matrix pencils}, Linear Algebra and its Applications, 88-89 (1987),
  pp.~139 -- 186.

\bibitem{DD13}
{\sc I.~Domanov and L.~De~Lathauwer}, {\em On the uniqueness of the canonical
  polyadic decomposition of third-order tensors --- {P}art {I}: {B}asic results
  and uniqueness of one factor matrix}, {SIAM} J. Matrix Anal. Appl., 34
  (2013), pp.~855--875.

\bibitem{DD14}
{\sc I.~Domanov and L.~De~Lathauwer}, {\em Canonical polyadic decomposition of
  third-order tensors: {Reduction} to generalized eigenvalue decomposition},
  {SIAM} J. Matrix Anal. Appl., 35 (2014), pp.~636--660.

\bibitem{FL18}
{\sc S.~{Friedland} and L.-H. {Lim}}, {\em Nuclear norm of higher-order
  tensors}, Math. Comp., 87 (2018), pp.~1255--1281.

\bibitem{FMPS13}
{\sc S.~{Friedland}, V.~{Mehrmann}, R.~{Pajarola}, and S.~{Suter}}, {\em On
  best rank one approximation of tensors}, Numerical Linear Algebra with
  Applications, 20 (2013), pp.~942--955.

\bibitem{FO14}
{\sc S.~{Friedland} and G.~{Ottaviani}}, {\em The number of singular vector
  tuples and uniqueness of best rank-one approximation of tensors}, Foundations
  of Computational Mathematics, 14 (2014), pp.~1209--1242.

\bibitem{FS13}
{\sc S.~{Friedland} and M.~{Stawiska}}, {\em Best approximation on
  semi-algebraic sets and k-border rank approximation of symmetric tensors},
  ArXiv,  (2013), \url{https://arxiv.org/abs/1311.1561}.

\bibitem{FS15}
{\sc S.~{Friedland} and M.~{Stawiska}}, {\em Some approximation problems in
  semi-algebraic geometry}, Banach Center Publ., 107 (2015), pp.~133--147.

\bibitem{GV96}
{\sc G.~Golub and C.~V. Loan}, {\em Matrix Computations}, The Johns Hopkins
  University Press, 1996.

\bibitem{Harsh70}
{\sc R.~A. Harshman}, {\em Foundations of the {PARAFAC} procedure: {M}odels and
  conditions for an ``explanatory'' multi-modal factor analysis}, UCLA Working
  Papers in Phonetics, 16 (1970), pp.~1--84.

\bibitem{Hars1984d}
{\sc R.~A. Harshman and M.~E. Lundy}, {\em {D}ata preprocessing and the
  extended {P}{A}{R}{A}{F}{A}{C} model}, in {R}esearch methods for multimode
  data analysis., H.~G. Law, C.~W. {Snyder Jr}, J.~A. Hattie, and R.~P.
  McDonald, eds., Praeger, New York, 1984, pp.~216--284.

\bibitem{HL13}
{\sc C.~J. Hillar and L.-H. Lim}, {\em Most tensor problems are {NP}-hard}, J.
  {ACM}, 60 (2013), pp.~45:1--45:39.

\bibitem{KSS12}
{\sc R.~D. Katz, H.~Schneider, and S.~Sergeev}, {\em On commuting matrices in
  max algebra and in classical nonnegative algebra}, Linear Algebra and its
  Applications, 436 (2012), pp.~276--292.
\newblock Special Issue devoted to the Applied Linear Algebra Conference (Novi
  Sad 2010).

\bibitem{KP02}
{\sc T.~Košir and B.~Plestenjak}, {\em On stability of invariant subspaces of
  commuting matrices}, Linear Algebra and its Applications, 342 (2002),
  pp.~133--147.

\bibitem{KB09}
{\sc T.~G. Kolda and B.~W. Bader}, {\em Tensor decompositions and
  applications}, {SIAM} Rev., 51 (2009), pp.~455--500.

\bibitem{KDS08}
{\sc W.~P. Krijnen, T.~K. Dijkstra, and A.~Stegeman}, {\em On the non-existence
  of optimal solutions and the occurrence of ``degeneracy'' in the
  {CANDECOMP/PARAFAC} model}, Psychometrika, 73 (2008), pp.~431--439.

\bibitem{K77}
{\sc J.~B. Kruskal}, {\em Three-way arrays: {Rank} and uniqueness of trilinear
  decompositions, with application to arithmetic complexity and statistics},
  Linear Algebra Appl., 18 (1977), pp.~95--138.

\bibitem{K89}
{\sc J.~B. {Kruskal}}, {\em Rank, Decomposition, and Uniqueness for 3-Way and
  n-Way Arrays}, North-Holland Publishing Co., NLD, 1989, p.~7–18.

\bibitem{KHL89}
{\sc J.~B. Kruskal, R.~A. Harshman, and M.~E. Lundy}, {\em How 3-{MFA} data can
  cause degenerate {PARAFAC} solutions, among other relationships}, in Multiway
  Data Analysis, R.~Coppi and S.~Bolasco, eds., North-Holland Publishing Co.,
  Amsterdam, The Netherlands, 1989, pp.~115--121.

\bibitem{Lan12}
{\sc J.~Landsberg}, {\em Tensors: {G}eometry and applications}, American
  Mathematical Society, 2012.

\bibitem{LM17}
{\sc J.~Landsberg and M.~Michałek}, {\em Abelian tensors}, Journal de
  Mathématiques Pures et Appliquées, 108 (2017), pp.~333--371.

\bibitem{L05}
{\sc {Lek-Heng Lim}}, {\em Singular values and eigenvalues of tensors: a
  variational approach}, in 1st IEEE International Workshop on Computational
  Advances in Multi-Sensor Adaptive Processing, 2005., 2005, pp.~129--132.

\bibitem{Cang03}
{\sc R.-C. Li}, {\em On perturbations of matrix pencils with real spectra, a
  revisit}, Math. Comput., 72 (2003), p.~715–728.

\bibitem{LC09}
{\sc L.-H. Lim and P.~Comon}, {\em Nonnegative approximations of nonnegative
  tensors}, J. Chemometrics, 23 (2009), pp.~432--441.

\bibitem{LC14}
{\sc L.-H. Lim and P.~Comon}, {\em Blind multilinear identification}, {IEEE}
  Trans. Inf. Theory, 60 (2014), pp.~1260--1280.

\bibitem{QHX21}
{\sc Y.~X. Liqun~Qi, Shenglong~Hu}, {\em Spectral norm and nuclear norm of a
  third order tensor}, Journal of Industrial \& Management Optimization,
  (2021).

\bibitem{Naka11}
{\sc Y.~NAKATSUKASA}, {\em Gerschgorin's theorem for generalized eigenvalue
  problems in the euclidean metric}, Mathematics of Computation, 80 (2011),
  pp.~2127--2142.

\bibitem{QCC18}
{\sc L.~{Qi}, H.~{Chen}, and Y.~{Chen}}, {\em Tensor Eigenvalues and Their
  Applications}, Springer, 2018.

\bibitem{QCL16}
{\sc Y.~Qi, P.~Comon, and L.-H. Lim}, {\em Semialgebraic geometry of
  nonnegative tensor rank}, {SIAM} J. Matrix Anal. Appl., 37 (2016),
  pp.~1556--1580.

\bibitem{QML19}
{\sc Y.~Qi, M.~Michałek, and L.~Lim}, {\em Complex best {$r$}-term
  approximations almost always exist in finite dimensions}, Applied and
  Computational Harmonic Analysis,  (2019).

\bibitem{S66}
{\sc P.~{Schönemann}}, {\em A generalized solution of the orthogonal
  procrustes problem}, Psychometrika, 31 (1966), pp.~1--10.

\bibitem{Setal17}
{\sc N.~Sidiropoulos, L.~De~Lathauwer, X.~Fu, K.~Huang, E.~E. Papalexakis, and
  C.~Faloutsos}, {\em Tensor decomposition for signal processing and machine
  learning}, {IEEE} Trans. Signal Process., 65 (2017), pp.~3551--3582.

\bibitem{SB00}
{\sc N.~D. Sidiropoulos and R.~Bro}, {\em On the uniqueness of multilinear
  decomposition of {N}-way arrays}, J. Chemometrics, 14 (2000), pp.~229--239.

\bibitem{Setal12}
{\sc M.~S{\o}rensen, L.~{De Lathauwer}, P.~Comon, S.~Icart, and L.~Deneire},
  {\em Canonical polyadic decomposition with a columnwise orthonormal factor
  matrix}, SIAM J. Matrix Anal. Appl., 33 (2012), pp.~1190--1213.

\bibitem{Sot16}
{\sc F.~Sottile}, {\em Real algebraic geometry for geometric constraints}, in
  Handbook of geometric constraint systems principles (Discrete Mathematics and
  Its Applications), M.~Sitharam, A.~S. John, and J.~Sidman, eds., Chapman and
  Hall/CRC, Boca Raton, 2016, ch.~12, pp.~273--285.

\bibitem{S06}
{\sc A.~Stegeman}, {\em Degeneracy in {C}andecomp/{P}arafac explained for {$p
  \times p \times 2$} arrays of rank {$p + 1$} or higher}, Psychometrika, 71
  (2006), pp.~483--501.

\bibitem{S08}
{\sc A.~{Stegeman}}, {\em Low-rank approximation of generic $p \times q \times
  2$ arrays and diverging components in the candecomp/parafac model}, SIAM J.
  Matrix Anal. \& Appl., 30 (2008), pp.~988--1007.

\bibitem{SS90}
{\sc G.~W. {Stewart} and J.~{Sun}}, {\em Matrix Perturbation Theory}, Academic
  Press, 1990.

\bibitem{Stra83}
{\sc V.~Strassen}, {\em Rank and optimal computation of generic tensors},
  Linear Algebra Appl., 52 (1983), pp.~645--685.

\bibitem{S16}
{\sc B.~{Sturmfels}}, {\em Tensors and their eigenvectors}, Not. Am. Math.
  Soc., 63 (2016), pp.~604--606.

\bibitem{TKD88}
{\sc J.~{Ten Berge}, H.~{Kiers}, and J.~{De Leeuw}}, {\em Explicit
  candecomp/parafac solutions for a contrived $2 \times 2 \times 2$ array of
  rank three}, Psychometrika, 53 (1988), pp.~579--584.

\bibitem{BK99}
{\sc J.~M.~F. Ten~Berge and H.~A.~L. Kiers}, {\em Simplicity of core arrays in
  three-way principal component analysis and the typical rank of {$p \times q
  \times 2$} arrays}, Linear Algebra Appl., 294 (1999), pp.~169--179.

\bibitem{UU04}
{\sc R.~Uehara and Y.~Uno}, {\em Efficient algorithms for the longest path
  problem}, in Algorithms and Computation, R.~Fleischer and G.~Trippen, eds.,
  Berlin, Heidelberg, 2005, Springer Berlin Heidelberg, pp.~871--883.

\bibitem{V17}
{\sc N.~Vannieuwenhoven}, {\em Condition numbers for the tensor rank
  decomposition}, Linear Algebra Appl., 535 (2017), pp.~35--86.

\bibitem{VDSBL16}
{\sc N.~{Vervliet}, O.~{Debals}, L.~{Sorber}, M.~{Van Barel}, and L.~{De
  Lathauwer}}, {\em Tensorlab 3.0}, Mar. 2016.

\bibitem{ZLP18}
{\sc M.~{Zhang}, X.~{Li}, and J.~{Peng}}, {\em Using joint generalized
  eigenvectors of a set of covariance matrix pencils for deflationary blind
  source extraction}, IEEE Trans. Signal Process., 66 (2018), pp.~2892--2904.

\end{thebibliography}

\vfill

\end{document}